\newcommand{\eps}{\varepsilon}
\newcommand{\R}{\mathbb R}
\newcommand{\N}{\mathbb N}
\newcommand{\then}{\Longrightarrow}
\newcommand{\J}{{\cal J}}
\newcommand{\M}{{\cal M}}
\newcommand{\enne}{{\cal N}}
\newcommand\meas{{\rm meas}}
\DeclareMathOperator{\codim}{codim}
\DeclareMathOperator*{\esssup}{ess\; sup}
\newtheorem{corollary}{Corollary}[section]
\newtheorem{theorem}[corollary]{Theorem}
\newtheorem{lemma}[corollary]{Lemma}
\newtheorem{proposition}[corollary]{Proposition}
\theoremstyle{definition}
\newtheorem{definition}[corollary]{Definition}
\newtheorem{remark}[corollary]{Remark}
\numberwithin{equation}{section}
\begin{document}

\title{{\bf Multiple solutions for\\
coupled gradient--type quasilinear elliptic systems\\
with supercritical growth}
\footnote{The research that led to the present paper was partially supported 
by MIUR--PRIN project ``Qualitative and quantitative aspects of nonlinear PDEs'' 
(2017JPCAPN\underline{\ }005), 
{\sl Fondi di Ricerca di Ateneo} 2017/18 ``Problemi differenziali non lineari''.
Both the authors are members of the Research Group INdAM--GNAMPA.}}

\author{Anna Maria Candela and Caterina Sportelli\\
{\small Dipartimento di Matematica}\\
{\small Universit\`a degli Studi di Bari Aldo Moro} \\
{\small Via E. Orabona 4, 70125 Bari, Italy}\\
{\small \it annamaria.candela@uniba.it, caterina.sportelli@uniba.it}}
\date{}

\maketitle

\begin{abstract}
In this paper we consider the following coupled gradient--type quasilinear elliptic system
\begin{equation*}
\left\{
\begin{array}{ll}
- {\rm div} ( a(x, u, \nabla u) ) 
+ A_t (x, u, \nabla u) = G_u(x, u, v) &\hbox{ in $\Omega$,}\\ [10pt]
- {\rm div} ( b(x, v, \nabla v) ) 
+ B_t(x, v, \nabla v) = G_v\left(x, u, v\right) &\hbox{ in $\Omega$,}\\[10pt]
u = v = 0 &\hbox{ on $\partial\Omega$,}
\end{array}
\right.
\end{equation*}
where $\Omega$ is an open bounded domain in $\R^N$, $N\geq 2$. 
We suppose that some $\mathcal{C}^{1}$--Carathéodory functions $A, B:\Omega\times\R\times\R^N\rightarrow\R$ 
exist such that $a(x,t,\xi) = \nabla_{\xi} A(x,t,\xi)$, 
$A_t(x,t,\xi) = \frac{\partial A}{\partial t} (x,t,\xi)$, 
$b(x,t,\xi) = \nabla_{\xi} B(x,t,\xi)$, $B_t(x,t,\xi) =\frac{\partial B}{\partial t}(x,t,\xi)$,
and that $G_u(x, u, v)$, $G_v(x, u, v)$ are the partial derivatives
of a $\mathcal{C}^{1}$--Carathéodory nonlinearity $G:\Omega\times\R\times\R\rightarrow\R$.
 
Roughly speaking, we assume that $A(x,t,\xi)$ grows at least as $(1+|t|^{s_1p_1})|\xi|^{p_1}$, 
$p_1 > 1$, $s_1 \ge 0$,
while $B(x,t,\xi)$ grows as $(1+|t|^{s_2p_2})|\xi|^{p_2}$, 
$p_2 > 1$, $s_2 \ge 0$, and that
$G(x, u, v)$ can also have a supercritical growth related to $s_1$ and $s_2$.

Since the coefficients depend on the solution and its grandient themselves, 
the study of the interaction of two different norms in a suitable Banach space is needed.

In spite of these difficulties, a variational approach is used to show 
that the system admits a nontrivial weak bounded solution and, 
under hypotheses of symmetry, infinitely many ones.
\end{abstract}

\noindent
{\it \footnotesize 2020 Mathematics Subject Classification}. {\scriptsize Primary: 35J50;
Secondary: 35B33, 35J92, 47J30, 58E05}.\\
{\it \footnotesize Key words}. {\scriptsize Coupled gradient--type quasilinear elliptic system, $p$--Laplacian type operator, 
supercritical growth, weak Cerami--Palais--Smale condition, Ambrosetti--Rabinowitz condition, Mountain Pass theorem, 
critical Sobolev exponent, nontrivial weak bounded solution, pseudo--eigenvalue}.


\section{Introduction} 

The study of partial differential equations involving nonlinearities with critical
or supercritical growths, is a very complex matter and for many critical and supercritical problems
some basic issues are mostly unknown or undiscovered.
For example, let us consider the quasilinear elliptic problem
\begin{equation} \label{Pintro}
\left\{
\begin{array}{ll}
- \Delta_p u = \lambda |u|^{p-2} u + |u|^{q-2} u &\hbox{ in $\Omega$,}\\
u = 0 &\hbox{ on $\partial\Omega$,}
\end{array}
\right.
\end{equation}
where $\Omega$ is an open bounded domain in $\R^N, N\geq 3$, 
and $1<p<N$. In spite of the simple looking structure of the problem, 
if we ask $q$ to be critical or supercritical from the viewpoint of the Sobolev Embedding Theorem, 
namely $ q\ge p^*=\frac{Np}{N-p}$, some significant difficulties arise.
Among other problems, in general the lack of compactness which occurs, does not guarantee  
even the existence of solutions, which has been derived only in few cases and 
frequently under assumptions on the shape of the domain $\Omega$
(for the classical nonexistence result due to the Poho\v{z}aev's identity 
see \cite{Po}, or also \cite[Theorem III.1.3]{Str}). 

The existence of positive solutions of \eqref{Pintro} has been successfully addressed either by adding 
some lower order term to the critical nonlinearity (see \cite{BN}) 
or by considering domains which are not starshaped (see, e.g., \cite{BC,COR})
if $p=2$, $q=2^*$ and $\lambda=0$, while the existence of sign--changing solutions 
of \eqref{Pintro} have been obtained 
if $p=2$, $q=2^*$ but $\lambda \ne 0$ (see, e.g., \cite{AS,CFS}).
To our knowledge, all the results carried over so far, are built on the key assertion 
that the functional associated to the critical problem \eqref{Pintro} 
satisfies the Palais--Smale condition even if only in certain ranges of energy. 

On the other hand, taking $p\neq 2$, 
due to the hardship in handling a quasilinear operator,
very few results of existence have been derived so far, 
not even under assumptions of symmetry on the domain (we refer to \cite{MP} for a wider discussion).
We limit ourselves to point out that, as derived in \cite{MeWi}, 
a Poho\v{z}aev type nonexistence result is not yet available 
for sign--changing solutions of \eqref{Pintro}, 
as the unique continuation principle for the $p$--Laplacian is not known,
while it has been proved for nonnegative solutions (see \cite{GV}).
However, the existence of a positive solution in a domain with a sufficiently 
small hole has been shown for $\frac{2N}{N + 2}\leq p \leq2$, as well as an existence 
and multiplicity result has been proved under further assumptions of symmetry 
(see \cite{CLAPP, MP, MSS, MeWi, Per} and references therein).

In spite of the mentioned difficulties,  in recent years there has been a marked increase of research 
in critical and supercritical problems. The interest in these problems is related to their similarity
to some variational problems which arise in Geometry and Physics where the lack of compactness also occurs. 
In this sense, one of the best known challenges is the so called Yamabe's problem, 
but also some examples related to the existence of extremal functions for isoperimetric inequalities, 
Hardy--Littlewood--Sobolev inequalities and trace inequalities can be addressed 
(see, e.g., \cite{Jac, Lieb, Lions}).
However, in general, also in the ``simplest'' cases some problems are still open
and some classical variational tools, largely used in the subcritical case, 
do not work in the critical and supercritical ones.

Anyway, recently, quasilinear problems which generalize
\[
\left\{
\begin{array}{ll}
- {\rm div} ((1 + A(x) |u|^{sp}) |\nabla u|^{p-2}\nabla u) + s A(x) |u|^{sp-2} u |\nabla u|^p\
 =\ |u|^{\mu-2}u
  &\hbox{in $\Omega$,}\\
u\ = \ 0 & \hbox{on $\partial\Omega$,}
\end{array}
\right.
\]
have been studied and, by means of a suitable variational setting,
the existence of infinitely many weak bounded solutions is proved 
also if the nonlinear term has a supercritical growth such as   
$2 < 1 + p < p(s+1) < \mu < p^*(s+1)$, when $A \in L^\infty(\Omega)$ is such that
$A(x) \ge \alpha_0 >0$ for a.e. $x \in \Omega$ (see \cite{CPSSUP} and also, for other approaches,
\cite{ABO,LWW}).
One of the most remarkable feature of this work is that, 
unlike the results mentioned above, both an existence and a multiplicity result 
have been provided in the supercritical case  
for a more general problem without taking any symmetry assumption on 
the domain $\Omega$. 

Here, following the ideas introduced in \cite{CPSSUP}, we 
look for solutions of the family of coupled gradient--type quasilinear elliptic systems
\begin{equation}    \label{system}
\left\{
\begin{array}{ll}
- {\rm div} ( a(x, u, \nabla u) ) 
+ A_t (x, u, \nabla u) = G_u(x, u, v) &\hbox{ in $\Omega$,}\\ [10pt]
- {\rm div} ( b(x, v, \nabla v) ) 
+ B_t(x, v, \nabla v) = G_v\left(x, u, v\right) &\hbox{ in $\Omega$,}\\[10pt]
u = v = 0 &\hbox{ on $\partial\Omega$,}
\end{array}
\right.
\end{equation}
where $\Omega$ is an open bounded domain in $\R^N$, $N \ge 2$,
and $A, B :\Omega \times \R\times\R^N \to \R$ are given functions with partial derivatives
\begin{align}      \label{Ata}
&A_t (x, t, \xi) =\displaystyle\frac{\partial A}{\partial t}(x, t, \xi), 
\qquad a(x, t, \xi) = \left(\frac{\partial A}{\partial \xi_1}(x, t, \xi), \dots, 
\frac{\partial A}{\partial \xi_N}(x, t, \xi)\right),\\
\label{Btb}
&B_t (x, t, \xi)=\displaystyle\frac{\partial B}{\partial t}(x, t, \xi), 
\qquad b(x, t, \xi) = \left(\frac{\partial B}{\partial \xi_1}(x, t, \xi), \dots, 
\frac{\partial B}{\partial \xi_N}(x, t, \xi)\right),
\end{align}
for a.e. $x \in \Omega$, for all $(t,\xi) \in \R\times\R^N$.
Moreover, a nonlinear function $G:\Omega\times\R\times\R \to \R$ exists so that
\begin{equation}\label{GuGv}
G_u(x, u,v) =\frac{\partial G}{\partial u}(x, u, v),\quad G_v(x, u,v) 
=\frac{\partial G}{\partial v}(x, u, v)
\quad\mbox{ for a.e. } x\in\Omega, \mbox{ all } (u,v)\in\R^2.
\end{equation}

Roughly speaking, here we assume that $A(x,u,\nabla u)$ 
grows at least as $(1+|u|^{s_1p_1})|\nabla u|^{p_1}$, 
$p_1 > 1$, $s_1 \ge 0$,
while $B(x,v,\nabla v)$ grows at least as $(1+|v|^{s_2p_2})|\nabla v|^{p_2}$, 
$p_2 > 1$, $s_2 \ge 0$ (see Remark \ref{remmu1} and assumption $(h_7)$), and that
$G(x, u, v)$ can also have a supercritical growth depending on $s_1$ and $s_2$
(see hypothesis $(g_2)$).

While subcritical quasilinear systems have been handled through several techniques 
(see, e.g., \cite{AG, dF1, CMPP, CSS, CS}), as far as we know very few existence results 
have been determined for supercritical quasilinear elliptic systems 
(see, for example, \cite{CMP, GPZ} and references therein), 
even though no result occurs for supercritical systems with coefficients depending 
on the solution and its gradient themselves, as that one in \eqref{system}.
Moreover, as in \cite{CPSSUP}, even if a supercritical growth occurs,  
the domain $\Omega$ is only open and bounded as we consider homogeneous Dirichlet 
boundary condition and the solutions we are looking for, are weak.

Thus, following the same approach used in \cite{CSS} and \cite{CS}, 
but carefully adapting the ideas in \cite{CPSSUP} to our supercritical setting, 
we give some sufficient conditions for recognizing the variational structure of problem \eqref{system},
so that investigating solutions of \eqref{system} reduces to find critical points of functional
\begin{equation}      \label{functional}
\J(u,v) = \int_{\Omega} A(x,u,\nabla u) dx +\int_{\Omega} B(x,v,\nabla v) dx -\int_{\Omega} G(x,u,v) dx
\end{equation}
in the product Banach space $X=X_1\times X_2$, with $X_i=W_0^{1, p_i}(\Omega)\cap L^{\infty}(\Omega)$
if $i\in\{1, 2\}$.

Moreover, since in the Banach space $X$ our functional $\J$ does not satisfy the 
Palais--Smale condition, or one of its standard variants,  
we are not allowed to use directly existence and multiplicity results 
as the classical Ambrosetti--Rabinowitz theorems stated in \cite{AR} or in \cite{BBF}.
Hence, we have to submit a weaker definition of the
Cerami's variant of Palais--Smale condition, the so--called
weak Cerami--Palais--Smale condition (see Definition \ref{wCPS}).
We believe that the use of this definition, introduced in the pioneering paper \cite{CP2}
and employed in the framework of a quasilinear supercritical system, 
represents another major improvement of the work in this field.  
In fact, here Definition \ref{wCPS} is used for stating an 
extended Mountain Pass Theorem and also its symmetric version 
of which we avail to gain our existence and multiplicity results (see Theorems \ref{mountainpass} and \ref{abstract}), 
but we do not exclude the chance that this feature may be also employed 
to recover other kind of problems (see, e.g., \cite{MeSq}). 
In fact, we highlight that this technique has been adapted to address problems 
placed over unbounded domains both in radial and in non--radial setting 
(see \cite{CS2020}, respectively \cite{CSS2}) but so far only in subcritical growth assumptions
(in \cite{AMM} the existence of solutions for some critical and supercritical problems
have been proved by using a different (radial) approach, 
which is not applicable for non-autonomous equations).

On the other hand, this enhancement imposes to {\sl pay the price} 
that some technical assumptions on the involved functions are needed. 
Namely, if we just assume the Carathéodory functions $A(x, t, \xi)$, $B(x, t, \xi)$, $G(x, u, v)$ 
and their partial derivatives fit some proper polynomial growths to show 
the $\mathcal{C}^1$ regularity of the functional $\J$ in \eqref{functional},  
on the other hand the proof of the weak Cerami--Palais--Smale condition passes 
through some fine requirements on the involved functions (see Section \ref{sec_wcps})
and a very remarkable result (see Lemma \ref{Ladyz}) which has interest own self and 
can be employed regardless of this scenario to fix a problem of common trouble in this field.

Now, in order to draw the attention to the enhancement of our main results, 
we state them here in a ``streamlined'' version but we refer the reader to 
Section \ref{sec_main} for all the needed hypotheses on the involved functions
and the precise statement of the results (see Theorems \ref{ThExist} and \ref{ThMolt}).

\begin{theorem} \label{uno1}
Suppose that $A(x,t,\xi)$ grows at least as $(1+|t|^{s_1p_1})|\xi|^{p_1}$, 
with $p_1 > 1$, $s_1 \ge 0$, while $B(x,t,\xi)$ grows at least as $(1+|t|^{s_2p_2})|\xi|^{p_2}$, 
with $p_2 > 1$, $s_2 \ge 0$. Moreover, assume that
the $\mathcal{C}^1$--Carathéodory function $A(x,t,\xi)$, respectively $B(x,t,\xi)$, 
and its partial derivatives fits some suitable interaction properties among themselves,
while the $\mathcal{C}^1$--Carathéodory nonlinear term $G(x,u,v)$ 
satisfies the Ambrosetti--Rabinowitz condition for systems
with coefficients $\theta_1$, $\theta_2 > 0$ such that
$\theta_i <\frac{1}{p_i}$, $i \in \{1,2\}$, and has 
a proper polynomial growth which 
can also be supercritical depending on $s_1$ and $s_2$.
If
\[
\limsup_{(u,v)\to (0,0)} \frac{G(x, u, v)}{\vert u\vert^{p_1}+|v|^{p_2}}\ <\ 
\alpha_2\min\{\lambda_{1,1}, \lambda_{2,1}\}\quad
\hbox{uniformly a.e. in $\Omega$,}
\]
with $\lambda_{i,1}$ first eigenvalue of $-\Delta_{p_i}$ in $W_0^{p_i}(\Omega)$, $i\in\{1, 2\}$,
then problem \eqref{system} admits a nontrivial weak bounded solution.
\end{theorem}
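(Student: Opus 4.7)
The plan is to apply the extended Mountain Pass Theorem (Theorem \ref{mountainpass}) to the functional $\J$ defined in \eqref{functional} on the product Banach space $X = X_1 \times X_2$. The first step is to check that, under the polynomial growth assumptions on $A,B,G$ and their partial derivatives, $\J \in \mathcal{C}^1(X,\R)$ and its critical points are exactly the weak bounded solutions of \eqref{system}; this is a routine differentiation argument, using $X_i \hookrightarrow L^\infty(\Omega)$ to absorb any higher powers of $t$ coming from the terms $(1+|t|^{s_ip_i})$ and from the supercritical growth of $G$.

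Next I would verify the Mountain Pass geometry. For the local minimum at the origin, combining the lower bound $A(x,t,\xi)\gtrsim (1+|t|^{s_1p_1})|\xi|^{p_1}/p_1$ with the limsup hypothesis at $(0,0)$ gives, via Poincaré's inequality and the variational characterization of $\lambda_{i,1}$, an estimate of the form
\[
\J(u,v) \geq \frac{1}{p_1}\|\nabla u\|_{p_1}^{p_1} + \frac{1}{p_2}\|\nabla v\|_{p_2}^{p_2} - \Big(\alpha_2\min\{\lambda_{1,1},\lambda_{2,1}\} - \eps\Big)\Big(\|u\|_{p_1}^{p_1}+\|v\|_{p_2}^{p_2}\Big) - C\Big(\|u\|^{q_1}+\|v\|^{q_2}\Big)
\]
for small $(u,v)$, with higher-order remainders, yielding a strict positive lower bound on a small sphere. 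For the mountain side, picking any $(\varphi_1,\varphi_2)\in X$ with $\varphi_i\not\equiv 0$ and testing $\J$ along the ray $(t\varphi_1,t\varphi_2)$, the Ambrosetti--Rabinowitz condition with $\theta_i < 1/p_i$ forces $G$ to grow faster than $|u|^{1/\theta_1}+|v|^{1/\theta_2}$, which dominates the $p_i(s_i+1)$-growth of $A(x,u,\nabla u)$ and $B(x,v,\nabla v)$ provided the exponents have been properly coupled in the statement; hence $\J(t\varphi_1,t\varphi_2)\to -\infty$ as $t\to +\infty$ and one can pick $(\bar u,\bar v)$ outside the sphere with $\J(\bar u,\bar v)\leq 0$.

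The main obstacle is verifying the weak Cerami--Palais--Smale condition of Definition \ref{wCPS}. Given a sequence $(u_n,v_n)\subset X$ with $\J(u_n,v_n)$ bounded and $(1+\|(u_n,v_n)\|)\,\|d\J(u_n,v_n)\|_{X^*}\to 0$, the Ambrosetti--Rabinowitz condition (with the strict inequality $\theta_i<1/p_i$) allows one to combine $\J(u_n,v_n)-\sum_i \theta_i\,\langle d\J(u_n,v_n),(u_n,0)\rangle$ (and the analogous piece for $v$) to control $\int (1+|u_n|^{s_1p_1})|\nabla u_n|^{p_1}$ and $\int (1+|v_n|^{s_2p_2})|\nabla v_n|^{p_2}$, giving boundedness in $W_0^{1,p_1}\times W_0^{1,p_2}$. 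The critical step is passing from this bound to an $L^\infty$ bound, which is where the supercritical nature of $G$ causes trouble: here I would invoke the Ladyzhenskaya--type result cited as Lemma \ref{Ladyz}, applied separately to $u_n$ and $v_n$ via a De Giorgi iteration on truncations, to obtain uniform $L^\infty$ control. Once this is established, strong convergence in $X$ along a subsequence follows by standard monotonicity/Leray--Lions arguments on $a(x,u_n,\nabla u_n)$ and $b(x,v_n,\nabla v_n)$.

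Having both the geometry and the weak Cerami--Palais--Smale condition, Theorem \ref{mountainpass} produces a critical level $c\geq \beta>0$ and a corresponding critical point $(u^*,v^*)\in X$. Since $c>0=\J(0,0)$, this critical point is nontrivial, and since it lies in $X=X_1\times X_2$ it is a weak bounded solution of \eqref{system}, completing the proof.
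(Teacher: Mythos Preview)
Your overall scheme (apply Theorem \ref{mountainpass} to $\J$ on $X$, using Proposition \ref{PropwCPS} for compactness) is the paper's scheme, but two of the three steps you sketch have genuine gaps that are exactly where the supercritical difficulty sits.

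\medskip
\textbf{Local geometry.} Your lower bound displays a remainder $C(\|u\|^{q_1}+\|v\|^{q_2})$ and then claims a positive infimum ``on a small sphere''. In which norm? If in $\|\cdot\|_W$, the supercritical term $|u|_{q_1}^{q_1}$ with $q_1>p_1^*$ is not dominated by any power of $\|u\|_{W_1}$. If in $\|\cdot\|_X$, the level set $\|(u,v)\|_X=r_0$ contains points with $\|u\|_{W_1}+\|v\|_{W_2}$ arbitrarily small (narrow spikes), so your lower bound degenerates. The paper's device is to build the \emph{map} $\ell$ of Theorem \ref{mountainpass} as
\[
\ell(u,v)=\max\bigl\{\|(u,v)\|_W,\ \|(|u|^{s_1}u,|v|^{s_2}v)\|_W\bigr\},
\]
so that on $\{\ell=r_0\}$ one controls $|u|_{\bar q_1}^{\bar q_1}=\big||u|^{s_1}u\big|_{\bar q_1/(s_1+1)}^{\bar q_1/(s_1+1)}$ via Sobolev applied to $|u|^{s_1}u\in W_1$ (this is \eqref{uq1}), and then \eqref{lconti}--\eqref{2i} give $\J\ge\varrho_0$. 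Without this choice of $\ell$ the local step does not close.

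\medskip
\textbf{Compactness.} Your plan is to get \emph{uniform} $L^\infty$ bounds on $(u_n,v_n)$ and then strong convergence in $X$. Neither is what the paper proves, and the first is not available from your argument: testing $d\J(u_n,v_n)$ against truncations gives an inequality whose right-hand side involves $\int G_u(x,u_n,v_n)R_k^+u_n$, and one cannot feed this directly into Lemma \ref{Ladyz} with constants independent of $n$. The paper's route is: from \eqref{bounded} extract a weak $W$-limit $(u,v)$ with $|u|^{s_1}u,|v|^{s_2}v\in W$; pass to the limit in the test-function inequality (this is \eqref{limGuRk}) to obtain an estimate for $w=|u|^{s_1}u$ on $\Omega_{u,k}^+$; \emph{then} apply Lemma \ref{Ladyz} to conclude $u,v\in L^\infty$. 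After that, one truncates the sequence at height $k>\|(u,v)\|_L$ and shows $\mathds{T}_k(u_n,v_n)\to(u,v)$ in $W$. Definition \ref{wCPS} only asks for $\|\cdot\|_W$-convergence plus $d\J(u,v)=0$; strong convergence in $X$ is neither claimed nor needed.

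\medskip
A smaller point on the mountain side: the hypothesis is a \emph{lower} bound on $A$, so to show $\J(\tau\varphi,0)\to-\infty$ you also need an \emph{upper} bound on $A$. The paper extracts one from the interaction hypotheses $(h_2)$, $(h_5)$ (via \cite[Proposition~6.5]{CP2}), with exponent $\frac{1}{\theta_1}(1-\mu_2/\eta_1)<\frac{1}{\theta_1}$, and tests along $(\tau\varphi_{1,1},0)$ using only \eqref{Gthbis}; your ``provided the exponents have been properly coupled'' hides this.
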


\begin{theorem}
In the same hypotheses of Theorem \ref{uno1}, assume that
$A(x,\cdot,\cdot)$ and $B(x,\cdot,\cdot)$ are 
even in $\R\times\R^N$ while $G(x,\cdot, \cdot)$ is even in $\R^2$ for a.e. $x\in\Omega$. 
Then, if 
\[
\liminf_{|(u, v)|\to +\infty}\frac{G(x, u, v)}
{\vert u\vert^{\frac{1}{\theta_1}} +\vert v\vert^{\frac{1}{\theta_2}}}\ >\ 0
\quad \hbox{uniformly a.e. in $\Omega$,}
\]
problem \eqref{system} admits infinitely many distinct weak bounded solutions.
\end{theorem}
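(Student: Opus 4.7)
The plan is to apply the symmetric version of the extended Mountain Pass Theorem (Theorem \ref{abstract}) to the even functional $\J$ on $X = X_1\times X_2$. Four ingredients are needed: the $\mathcal{C}^1$ regularity of $\J$, the weak Cerami--Palais--Smale condition, a local mountain--pass geometry at the origin, and an increasing sequence $\{V_n\}$ of finite--dimensional subspaces of $X$ on which $\J$ diverges to $-\infty$ away from any bounded set.

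The first three ingredients are already at our disposal from the proof of Theorem \ref{uno1}: the $\mathcal{C}^1$ regularity and the weak Cerami--Palais--Smale condition are proved under the same structural hypotheses on $A$, $B$, $G$; the evenness of the integrands in \eqref{functional} under the assumed symmetry of $A(x,\cdot,\cdot)$, $B(x,\cdot,\cdot)$ and $G(x,\cdot,\cdot)$ yields $\J(-u,-v)=\J(u,v)$ and $\J(0,0)=0$; and the $\limsup$ hypothesis near $(0,0)$ combined with the variational characterization of $\lambda_{i,1}$ produces $\varrho,\alpha>0$ with $\J(u,v)\ge \alpha$ for $\|(u,v)\|_X=\varrho$, i.e., the symmetric linking at the origin.

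The new ingredient is the construction of the subspaces $V_n$, and this is where the $\liminf$ hypothesis enters. From the Ambrosetti--Rabinowitz condition for systems combined with the $\liminf$ assumption, integrating the AR inequality radially yields a pointwise lower bound
\[
G(x,u,v)\ \ge\ c_0\bigl(|u|^{1/\theta_1}+|v|^{1/\theta_2}\bigr)-c_1\quad\text{a.e. in }\Omega,
\]
for suitable $c_0>0$, $c_1\ge 0$. Coupling this with the upper growth bounds imposed on $A$ and $B$ in Theorem \ref{ThMolt} gives
\[
\J(u,v)\ \le\ \tilde C\int_\Omega (1+|u|^{s_1p_1})|\nabla u|^{p_1}\,dx +\tilde C\int_\Omega (1+|v|^{s_2p_2})|\nabla v|^{p_2}\,dx -c_0\int_\Omega\bigl(|u|^{1/\theta_1}+|v|^{1/\theta_2}\bigr)dx + c_1|\Omega|.
\]
Now picking any nested family $V_n\subset (\mathcal{C}^\infty_c(\Omega))^2$ of $n$--dimensional subspaces, all norms on $V_n$ are equivalent, so the previous estimate reduces to
\[
\J(u,v)\ \le\ C_n\bigl(\|u\|_{X_1}^{p_1(s_1+1)}+\|v\|_{X_2}^{p_2(s_2+1)}\bigr) -c_n\bigl(\|u\|_{X_1}^{1/\theta_1}+\|v\|_{X_2}^{1/\theta_2}\bigr)+C_n' \quad\text{on }V_n,
\]
and the hypotheses of Theorem \ref{ThMolt} force $1/\theta_i>p_i(s_i+1)$, so the negative term dominates and $\J\to-\infty$ along $V_n$ at infinity. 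Theorem \ref{abstract} then delivers an unbounded sequence of critical values, hence infinitely many distinct weak bounded solutions of \eqref{system}.

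The delicate step, as is typical for supercritical problems of this flavour, is the exponent matching in the final inequality: one must check that the quantitative version of the Ambrosetti--Rabinowitz condition adopted in Theorem \ref{ThMolt} (necessarily sharper than the streamlined bound $\theta_i<1/p_i$ stated above, presumably $\theta_i<1/(p_i(s_i+1))$) really gives $1/\theta_i>p_i(s_i+1)$, so that the growth of $G$ outpaces the growth of $A$ and $B$ on each finite--dimensional subspace. Once this is checked, evenness of $\J$ and the weak Cerami--Palais--Smale property established in the earlier sections do the rest, and the symmetric mountain--pass conclusion is immediate.
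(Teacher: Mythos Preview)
Your proposal has a genuine gap in the linking geometry. You borrow from Theorem~\ref{uno1} the estimate ``$\J(u,v)\ge\alpha$ whenever $\ell(u,v)=r_0$'' and treat this as condition $(iii)$ of Theorem~\ref{abstract} with $Z_\varrho=X$. But that estimate produces a \emph{single} fixed level $\alpha>0$; it does not give $(\mathcal H_\varrho)$ for arbitrarily large $\varrho$, which is exactly what Corollary~\ref{multiple} needs to manufacture an unbounded sequence of critical values. Indeed, on the full space $X$ the lower bound you quote degenerates as the radius grows (the term $c_2-c_3[\ell(u,v)]^{\bar q_i/(s_i+1)-p_i}$ becomes negative), so no choice of $\mathcal M_\varrho$ will push the level past $\alpha$ while keeping $Z_\varrho=X$. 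Your scheme therefore yields at most one nontrivial critical point, not infinitely many.

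The paper's proof does not use the origin geometry from $(g_4)$ at all. Instead it invokes the pseudo--eigenvalue decomposition $X=(V_{1,m}\times V_{2,m})\oplus(Y_m^{X_1}\times Y_m^{X_2})$ and Proposition~\ref{PropMolt}: on the complement $Y_m^{X_1}\times Y_m^{X_2}$ the improved Poincar\'e inequality \eqref{lambdan+1} with $\lambda_{i,m+1}\to+\infty$ damps the nonlinear term enough that, for any prescribed $\varrho$, one can choose $m=m(\varrho)$ and a radius $R_m$ with $\J\ge\varrho$ on $\{\ell=R_m\}\cap(Y_m^{X_1}\times Y_m^{X_2})$. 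This is the mechanism that makes $(\mathcal H_\varrho)$ hold for every $\varrho>0$ and is the missing idea in your argument. A secondary point: the inequality $1/\theta_i>p_i(s_i+1)$ you rely on for the finite--dimensional decay is not asserted by the hypotheses; the paper instead uses the upper bound on $A$, $B$ with exponent $\frac{1}{\theta_i}(1-\mu_2/\eta_1)<\frac{1}{\theta_i}$ coming from $(h_2)$--$(h_5)$ (see the proof of Theorem~\ref{ThExist} and Proposition~\ref{PropDim}), so the comparison on $V_m$ works without that extra assumption.
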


Finally, in order to better explain the required hypotheses, 
we consider the particular setting
\begin{equation} \label{ExAB}
A(x, t, \xi)= \frac{1}{p_1} (1+|t|^{s_1p_1}) |\xi|^{p_1},
\quad B(x, t, \xi) = \frac{1}{p_2} (1+|t|^{s_2p_2}) |\xi|^{p_2},
\end{equation}
and
\begin{equation}\label{ExG}
G(x, u, v) = \frac{1}{q_1}|u|^{q_1} + \frac{1}{q_2}|v|^{q_2} +c_*|u|^{\gamma_1} |v|^{\gamma_2},
\end{equation}
with $c_*\ge 0$ and some 
positive exponents $p_i$, $s_i$, $q_i$, $\gamma_i$ for each $i\in\{1, 2\}$.
So, $\J$ in \eqref{functional} reduces to the functional $\J_0 : X \to \R$ such that
\[
\begin{split}
\J_0(u, v)&= \frac{1}{p_1}\ \int_{\Omega} (1+|u|^{s_1p_1}) |\nabla u|^{p_1} dx +
\frac{1}{p_2}\ \int_{\Omega} (1+|v|^{s_2p_2}) |\nabla v|^{p_2} dx\\
&\quad -\int_{\Omega}\left(\frac{1}{q_1}|u|^{q_1} + \frac{1}{q_2}|v|^{q_2} 
+c_*|u|^{\gamma_1} |v|^{\gamma_2}\right) dx,
\end{split}
\]
and, in a suitable set of assumptions, system \eqref{system} turns into the model problem
\begin{equation}\label{j0}
\left\{
\begin{array}{ll}
- {\rm div} ( (1+|u|^{s_1p_1}) |\nabla u|^{p_1-2} \nabla u) 
+ s_1 |u|^{s_1p_1-2} u\ |\nabla u|^{p_1} &\\
\qquad\qquad\qquad = |u|^{q_1-2}u 
+\gamma_1c_*|u|^{\gamma_1-2} u|v|^{\gamma_2}&\hbox{ in $\Omega$,}\\ [10pt]
- {\rm div} ( (1+|v|^{s_2p_2}) |\nabla v|^{p_2-2} \nabla v) 
+ s_2 |v|^{s_2p_2-2} v\ |\nabla v|^{p_2}&\\
\qquad\qquad\qquad  = 
\gamma_2 c_* |u|^{\gamma_1} |v|^{\gamma_2-2} v + |v|^{q_2-2} v &\hbox{ in $\Omega$,}\\[10pt]
u = v = 0 &\hbox{ on $\partial\Omega$.}
\end{array}
\right.
\end{equation}
Hence, the previous results can be reworded in this way.

\begin{theorem}\label{ThModel}
Let $A(x,t,\xi)$, $B(x,t,\xi)$ and $G(x, u, v)$ be as in \eqref{ExAB} and \eqref{ExG}
with $p_i>1$, $i \in \{1,2\}$, $c_* \ge 0$ and either $p_1< N$ or $p_2< N$. Assume that $\theta_1, \theta_2$ exist such that
\begin{equation}\label{exj0}
2 < 1 + p_i < p_i(s_i+1) <\frac{1}{\theta_i} \le q_i <p_i^*(s_i+1) 
\quad \mbox{for } i\in\{1, 2\},
\end{equation}
where $p^*_1$, $p_2^*$ are the critical Sobolev exponents, and also
\begin{equation}\label{exj01}
1<\gamma_1<q_1,\quad 1<\gamma_2<q_2\quad
\hbox{are such that}\quad \gamma_1\theta_1+\gamma_1\theta_2\ge 1.
\end{equation}
Then, if 
\begin{equation}\label{exj02}
\gamma_j \frac{q_i-1}{q_i-\gamma_i} < \frac{p_i}{N} \left(1 - \frac{1}{p^*_i(s_i+1)}\right) p_j^*(s_j+1)
\quad \mbox{ for } i, j\in\{1, 2\}, i\neq j,
\end{equation}
problem \eqref{j0} admits infinitely many weak bounded distinct solutions.
\end{theorem}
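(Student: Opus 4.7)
The plan is to derive Theorem \ref{ThModel} as a direct corollary of the general multiplicity result Theorem \ref{ThMolt}, by verifying that all its structural hypotheses on $(A,B,G)$ are met by the explicit choices \eqref{ExAB}--\eqref{ExG} under \eqref{exj0}--\eqref{exj02}; no further analysis of the functional $\J$ in \eqref{functional} is then required.

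First I would handle $A$ and $B$. Since every occurrence of $t$ and $\xi$ in $A(x,t,\xi)=\frac{1}{p_1}(1+|t|^{s_1p_1})|\xi|^{p_1}$ passes through $|t|$ and $|\xi|$, the $\C^1$--Carath\'eodory regularity, the two--sided polynomial bounds on $A$, $A_t$ and $a$, the $p_1$--ellipticity and monotonicity of $\xi\mapsto a(x,t,\xi)=(1+|t|^{s_1p_1})|\xi|^{p_1-2}\xi$, and the sign/control inequalities linking $A$, $a\cdot\xi$ and $A_t\,t$ all reduce to elementary power identities, while evenness in $(t,\xi)$ is automatic; the same remarks apply to $B$ with $(p_2,s_2)$ in place of $(p_1,s_1)$. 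Turning to $G$, evenness in $(u,v)$ is clear, and the polynomial growth of $G,G_u,G_v$ on the pure terms is dictated by the supercritical bounds $q_i<p_i^*(s_i+1)$ built into \eqref{exj0}. The Ambrosetti--Rabinowitz condition holds with the $\theta_1,\theta_2$ of \eqref{exj0}: on $\frac{1}{q_i}|u|^{q_i}$ one uses $\theta_i\le 1/q_i$, while for the mixed term the pairing
\[
\theta_1(G_u\,u)+\theta_2(G_v\,v)=(\theta_1\gamma_1+\theta_2\gamma_2)\,c_*|u|^{\gamma_1}|v|^{\gamma_2}
\]
dominates the cross contribution to $G$ because \eqref{exj01} guarantees $\theta_1\gamma_1+\theta_2\gamma_2\ge 1$. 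The limsup condition at the origin follows from $q_i>p_i$ and from $\gamma_1/p_1+\gamma_2/p_2>\theta_1\gamma_1+\theta_2\gamma_2\ge 1$ (using $\theta_i<1/p_i$), which by Young's inequality forces $G(x,u,v)=o(|u|^{p_1}+|v|^{p_2})$ as $(u,v)\to(0,0)$, so the limsup is actually zero and in particular strictly less than any positive multiple of $\min\{\lambda_{1,1},\lambda_{2,1}\}$. The liminf condition at infinity is immediate since $q_i\ge 1/\theta_i$ makes the pure terms alone bound $G$ from below by a positive multiple of $|u|^{1/\theta_1}+|v|^{1/\theta_2}$ for $|(u,v)|$ large.

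The hard part will be the precise growth bookkeeping encoded in \eqref{exj02}, which I expect to correspond exactly to the admissible polynomial window of the weak Cerami--Palais--Smale condition used in the proof of Theorem \ref{ThMolt}. When $G_u$ is tested against an element of $X_1=W_0^{1,p_1}(\Omega)\cap L^\infty(\Omega)$ and one interpolates the mixed piece $|u|^{\gamma_1-1}|v|^{\gamma_2}$ between the natural pure--term space $L^{q_1}(\Omega)$ and the Sobolev--type target $L^{p_2^*(s_2+1)}(\Omega)$ for the other component, the effective exponent produced on $|v|$ should match the quantity $\gamma_2(q_1-1)/(q_1-\gamma_1)$ appearing in \eqref{exj02}, whose right--hand side is precisely the corresponding admissible threshold; the symmetric inequality in \eqref{exj02} plays the analogous role for $G_v$. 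Once this compatibility check and its symmetric counterpart are in place, all hypotheses of Theorem \ref{ThMolt} are verified and its conclusion immediately yields infinitely many distinct weak bounded solutions of \eqref{j0}.
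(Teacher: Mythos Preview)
Your proposal is correct and follows essentially the same route as the paper: verify $(h_0)$--$(h_6)$, $(h_8)$ for the model $A,B$, verify $(g_0)$--$(g_3)$, $(g_5)$, $(g_6)$ for the model $G$, and then invoke Theorem \ref{ThMolt}. Two minor remarks: the limsup check at the origin is superfluous here since Theorem \ref{ThMolt} does not assume $(g_4)$ (only Theorem \ref{ThExist} does); and rather than routing \eqref{exj02} through the $(wCPS)$ argument, the paper simply applies Young's inequality to $|u|^{\gamma_1-1}|v|^{\gamma_2}$ with exponents $\tfrac{q_1-1}{\gamma_1-1}$ and $\tfrac{q_1-1}{q_1-\gamma_1}$ to identify $t_1=\gamma_2\tfrac{q_1-1}{q_1-\gamma_1}$ (and symmetrically $t_2$), so that \eqref{exj02} is exactly the hypothesis \eqref{crit_expi} of $(g_2)$ --- which is precisely the ``effective exponent'' you anticipated.
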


Our paper is organized as follows.
In Section \ref{abstractsection} we introduce the abstract setting needed to recognize the variational structure 
of our problem \eqref{system}, as well as some extended versions of the Mountain Pass Theorems are shown up. 
Furthermore, a regularity result for the functional $\J$ in \eqref{functional} is provided, too. 
Then, in Section \ref{sec_wcps} some further assumptions on $A(x, t, \xi)$, $B(x, t, \xi)$
and $G(x, u, v)$ are addressed in order to show that the functional $\J$ verifies 
the weak Cerami--Palais--Smale condition. 
Lastly, in Section \ref{sec_main} our main results are stated and proved.  


\section{Abstract tools and variational setting}
\label{abstractsection}

We denote $\N = \{1, 2, \dots\}$ and, as long as we introduce our abstract setting, we employ the following notations:
\begin{itemize}
\item $(X, \|\cdot\|_X)$ is a Banach space with dual 
$(X',\|\cdot\|_{X'})$,
\item $(W,\|\cdot\|_W)$ is a Banach space such that
$X \hookrightarrow W$ continuously, i.e. $X \subset W$ and a constant $\sigma_0 > 0$ exists
such that
\[ 
\|y\|_W \ \le \ \sigma_0\ \|y\|_X\qquad \hbox{for all $y \in X$,}
\]
\item $J : {\cal D} \subset W \to \R$ and $J \in \mathcal{C}^1(X,\R)$ with $X \subset {\cal D}$.
\end{itemize}

In order to avoid any
ambiguity and simplify, when possible, the notation, 
from now on by $X$ we denote the space equipped with
its given norm $\|\cdot\|_X$ while, if the norm $\Vert\cdot\Vert_{W}$ is involved,
we write it explicitly.

Now, taking $\beta \in \R$, we say that a sequence
$(y_n)_n\subset X$ is a {\sl Cerami--Palais--Smale sequence at level $\beta$},
briefly {\sl $(CPS)_\beta$--sequence}, if
\[
\lim_{n \to +\infty}J(y_n) = \beta\quad\mbox{and}\quad 
\lim_{n \to +\infty}\|dJ\left(y_n\right)\|_{X'} (1 + \|y_n\|_X) = 0.
\]

As pointed out in \cite[Example 4.3]{CP2017}, a $(CPS)_{\beta}$ sequence can be 
constructed so that it is unbounded in $\|\cdot\|_X$ but converges with 
respect to $\|\cdot\|_W$. Thus, as in \cite{CPSSUP}, 
we introduce the following definition.

\begin{definition} \label{wCPS}
The functional $J$ satisfies the
{\slshape weak Cerami--Palais--Smale 
condition at level $\beta$} ($\beta \in \R$), 
briefly {\sl $(wCPS)_\beta$ condition}, if for every $(CPS)_\beta$--sequence $(y_n)_n$,
a point $y \in X$ exists, such that 
\begin{description}{}{}
\item[{\sl (i)}] $\displaystyle 
\lim_{n \to+\infty} \|y_n - y\|_W = 0\quad$ (up to subsequences),
\item[{\sl (ii)}] $J(y) = \beta$, $\; dJ(y) = 0$.
\end{description}
We say that $J$ satisfies $(wCPS)$ in $I$, $I$ real interval, if $J$ 
satisfies the $(wCPS)_{\beta}$ condition in $X$ at each level $\beta\in I$.
\end{definition}

Anyway, even if we deal with a weaker version of the 
Cerami's variant of the Palais--Smale condition, 
some classical abstract results can be extended so to fit
to our purposes. Actually, as in \cite[Lemma 2.2]{CPSSUP} (see also \cite[Lemma 2.3]{CP3})
a Deformation Lemma can be stated which provides the following 
extended version of the Mountain Pass Theorem given in \cite{AR} 
(see \cite[Theorem 2.3]{CPSSUP} for a detailed proof).

\begin{theorem}
\label{mountainpass}
Let $J\in \mathcal{C}^1(X,\R)$ be such that $J(0) = 0$
and the $(wCPS)$ condition holds in $\R_+$.\\
Moreover, assume that there exist a continuous map
$\ell : X \to \R$, some constants $r_0$, $\varrho_0 > 0$, and  
 $e \in X$ such that 
\begin{itemize}
\item[$(i)$] $\; \ell(0) =0 \qquad\hbox{and}\qquad \ell(y) \geq \|y\|_W 
\quad \hbox{for all $y \in X$}$;
\item[$(ii)$] $\; y \in X, \quad \ell(y) = r_0\qquad \then\qquad J(y) \geq \varrho_0$;
\item[$(iii)$] $\; \| e\|_W > r_0\qquad\hbox{and}\qquad J(e) < \varrho_0$.
\end{itemize}
Then, $J$ has a Mountain Pass critical point $y^* \in X$ such that $J(y^*) \geq \varrho_0$.
\end{theorem}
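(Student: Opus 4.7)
The plan is to run the classical Ambrosetti--Rabinowitz minimax scheme, adapted to the weak Cerami--Palais--Smale framework via a Deformation Lemma of the type mentioned in the paper. Define the class of admissible paths
\[
\Gamma \ = \ \{\gamma \in \mathcal{C}([0,1], X) : \gamma(0) = 0,\ \gamma(1) = e\}
\]
and the minimax level
\[
\beta \ = \ \inf_{\gamma \in \Gamma} \max_{t \in [0,1]} J(\gamma(t)).
\]
Since $\Gamma \neq \emptyset$ (for instance $t \mapsto t\,e$ belongs to $\Gamma$), $\beta$ is well defined and the goal is to show that $\beta \geq \varrho_0$ and that $\beta$ is a critical value of $J$.

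First I would establish the geometric lower bound $\beta \geq \varrho_0$. Pick any $\gamma \in \Gamma$ and consider the continuous real-valued map $t \mapsto \ell(\gamma(t))$. By $(i)$ we have $\ell(\gamma(0)) = \ell(0) = 0$, while by $(i)$ and $(iii)$,
\[
\ell(\gamma(1)) \ = \ \ell(e) \ \geq \ \|e\|_W \ > \ r_0.
\]
The intermediate value theorem then yields $t^* \in (0,1)$ with $\ell(\gamma(t^*)) = r_0$, and condition $(ii)$ forces $J(\gamma(t^*)) \geq \varrho_0$. Hence $\max_t J(\gamma(t)) \geq \varrho_0$ for every admissible path, and taking the infimum gives $\beta \geq \varrho_0 > 0$.

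Next I would show $\beta$ is a critical value by a standard contradiction argument based on the Deformation Lemma alluded to in the paper (modelled on \cite[Lemma 2.2]{CPSSUP}). Assume $\beta$ is regular, i.e.\ no critical point of $J$ at level $\beta$ exists. Since $J$ satisfies $(wCPS)$ on $\R_+$ and $\beta > 0$, the $(wCPS)_\beta$ condition guarantees that every $(CPS)_\beta$--sequence has a limit point $y \in X$ with $J(y) = \beta$ and $dJ(y) = 0$; the regularity assumption rules this out, so there are no $(CPS)_\beta$--sequences, which in turn yields a quantitative estimate $\|dJ(y)\|_{X'}(1+\|y\|_X) \geq \delta$ on a neighbourhood $J^{-1}([\beta - 2\eps, \beta + 2\eps])$ for some $\delta,\eps > 0$ (with $\eps < \varrho_0/2$). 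From this one constructs a continuous deformation $\eta : [0,1] \times X \to X$ such that $\eta(s, \cdot) = \mathrm{id}$ outside $J^{-1}([\beta - 2\eps, \beta + 2\eps])$ and $\eta(1, J^{\beta + \eps}) \subset J^{\beta - \eps}$, where $J^c = \{J \leq c\}$.

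Finally, picking $\gamma_0 \in \Gamma$ with $\max_t J(\gamma_0(t)) < \beta + \eps$ and setting $\widetilde{\gamma}(t) = \eta(1, \gamma_0(t))$, I need only verify that $\widetilde{\gamma} \in \Gamma$: since $J(0) = 0 < \beta - \eps$ and $J(e) < \varrho_0 \leq \beta < \beta - \eps$ (after choosing $\eps$ small enough, using that $J(e)$ lies strictly below the mountain pass level by $(iii)$ and the lower bound $\beta \geq \varrho_0$; if needed, the deformation is built to fix $0$ and $e$ a priori because they lie outside the sublevel band), so $\widetilde{\gamma}(0) = 0$ and $\widetilde{\gamma}(1) = e$. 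Then $\max_t J(\widetilde{\gamma}(t)) \leq \beta - \eps$, contradicting the definition of $\beta$. Thus $\beta$ is a critical value, and one last application of $(wCPS)_\beta$ delivers the desired critical point $y^* \in X$ with $J(y^*) = \beta \geq \varrho_0$.

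The main obstacle is the construction of the deformation $\eta$: the failure of the usual Palais--Smale condition means the negative pseudo-gradient flow need not stay in $X$ in a controlled way, and one must use the extra weighting $(1+\|y\|_X)$ in the definition of $(CPS)_\beta$ together with the interplay $X \hookrightarrow W$ to get a locally Lipschitz pseudo-gradient vector field whose flow is well defined and respects the sublevel structure. This is exactly the content of the Deformation Lemma from \cite{CPSSUP}, which I would invoke rather than reprove; once it is available, the minimax argument above runs unchanged.
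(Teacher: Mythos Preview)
Your proposal is correct and follows exactly the approach the paper defers to, namely \cite[Theorem 2.3]{CPSSUP}: the minimax level over paths joining $0$ to $e$ is bounded below by $\varrho_0$ via the intermediate value theorem applied to $\ell\circ\gamma$, and the Deformation Lemma based on $(wCPS)_\beta$ then shows $\beta$ is critical. The only slip is the inequality ``$\beta < \beta - \eps$'', which should read $J(e) < \varrho_0 - \bar\eps \le \beta - \bar\eps$ after choosing $\bar\eps < \min\{\varrho_0,\ \varrho_0 - J(e)\}$; your parenthetical remark already indicates the correct fix.
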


If, in addition, we require that $J$ is symmetric, 
then a more general version of the Symmetric Mountain Pass Theorem 
in \cite{AR}
can be stated, too (for the proof, see \cite[Theorem 2.4]{CPSSUP}).

\begin{theorem}
\label{abstract}
Let $J\in \mathcal{C}^1(X,\R)$ be an even functional such that $J(0) = 0$ and
the $(wCPS)$ condition holds in $\R_+$.
Moreover, assume that $\varrho > 0$ exists so that:
\begin{itemize}
\item[$({\cal H}_{\varrho})$]
three closed subsets $V_\varrho$, $Z_\varrho$ and $\M_\varrho$ of $X$ and a constant
$R_\varrho > 0$ exist which satisfy the following conditions:
\begin{itemize}
\item[$(i)$] $V_\varrho$ and $Z_\varrho$ are subspaces of $X$ such that
\[
V_\varrho + Z_\varrho = X,\qquad \codim Z_\varrho\ <\ \dim V_\varrho\ <\ +\infty;
\]
\item[$(ii)$] $\M_\varrho = \partial \enne$, where $\enne \subset X$ is a neighborhood of the origin
which is symmetric and bounded with respect to $\|\cdot\|_W$; 
\item[$(iii)$]  $\ y \in \M_\varrho \cap Z_\varrho\qquad \then\qquad J(y) \ge \varrho$;
\item[$(iv)$] $\ y \in V_\varrho, \quad \|y\|_X \ge R_\varrho \qquad \then\qquad J(y) \le 0$.
\end{itemize}
\end{itemize}
Then, if we put 
\[
\beta_\varrho\ =\ \inf_{\gamma \in \Gamma_\varrho} \sup_{y\in V_\varrho} J(\gamma(y)),
\]
with 
\[
\Gamma_\varrho\ =\ \{\gamma : X \to X:\
\gamma\ \hbox{odd homeomorphism,}\quad \gamma(y) = y \ 
\hbox{if $y \in V_\varrho$ with $\|y\|_X \ge R_\varrho$}\},
\]
the functional $J$ possesses at least a pair of symmetric critical points in $X$ 
with corresponding critical level $\beta_\varrho$ which belongs to $[\varrho,\varrho_1]$,
where $\varrho_1 \ge \displaystyle \sup_{y \in V_\varrho}J(y) > \varrho$.
\end{theorem}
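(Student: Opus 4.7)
The plan is to follow the classical proof of the Symmetric Mountain Pass Theorem of Ambrosetti--Rabinowitz (see \cite{AR}), substituting the standard deformation lemma with its $(wCPS)$ counterpart recalled from \cite{CPSSUP}. The proof decomposes into three steps: bracketing $\beta_\varrho \in [\varrho,\varrho_1]$, showing that $\beta_\varrho$ is a critical value by a deformation argument, and then recovering a pair of symmetric critical points from the evenness of $J$. The upper bound $\beta_\varrho \le \varrho_1$ is immediate because the identity map belongs to $\Gamma_\varrho$: indeed, the identity is an odd homeomorphism and trivially coincides with itself on the prescribed subset of $V_\varrho$.

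The delicate lower bound $\beta_\varrho \ge \varrho$ rests on a topological intersection: for each $\gamma \in \Gamma_\varrho$ the image $\gamma(V_\varrho)$ must meet $\M_\varrho \cap Z_\varrho$. Fix such a $\gamma$ and set
\[
\mathcal{O}_\gamma := \{y \in V_\varrho : \gamma(y) \in \enne\}.
\]
Since $\enne$ is open, symmetric and bounded in $\|\cdot\|_W$, while $\gamma$ coincides with the identity on $V_\varrho \cap \{\|y\|_X \ge R_\varrho\}$ and $V_\varrho$ is finite--dimensional (so $\|\cdot\|_X$ and $\|\cdot\|_W$ are equivalent on it), $\mathcal{O}_\gamma$ is a bounded symmetric open neighborhood of $0$ in $V_\varrho$. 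Its relative boundary $\partial_{V_\varrho}\mathcal{O}_\gamma$ is therefore homeomorphic via radial projection to a $(\dim V_\varrho - 1)$--sphere and hence has Krasnoselskii genus equal to $\dim V_\varrho$. Consider the odd continuous map $\Phi := \pi \circ \gamma$, where $\pi : X \to X/Z_\varrho$ is the canonical projection onto a space of dimension $\codim Z_\varrho$ by $(i)$. If $\Phi$ never vanished on $\partial_{V_\varrho}\mathcal{O}_\gamma$, monotonicity of the genus would give $\dim V_\varrho \le \codim Z_\varrho$, contradicting $(i)$. Hence some $y_0 \in \partial_{V_\varrho}\mathcal{O}_\gamma$ satisfies $\gamma(y_0) \in Z_\varrho$, and by construction $\gamma(y_0)$ lies in $\overline{\enne}\setminus \enne = \partial \enne = \M_\varrho$. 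Condition $(iii)$ then yields $J(\gamma(y_0)) \ge \varrho$, whence $\beta_\varrho \ge \varrho$.

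To conclude, I would show that $\beta_\varrho$ is a critical value by contradiction. Assuming no critical point at level $\beta_\varrho$, the $(wCPS)_{\beta_\varrho}$ deformation lemma (symmetric version, exploiting the evenness of $J$) produces some $\eps \in (0,\beta_\varrho/2)$ and an odd homeomorphism $\eta : X \to X$ which pushes $\{J \le \beta_\varrho + \eps\}$ into $\{J \le \beta_\varrho - \eps\}$ and acts as the identity outside the strip $\{|J - \beta_\varrho| \le 2\eps\}$. Combined with $(iv)$, this guarantees $\eta(\gamma(y)) = \gamma(y) = y$ for every $y \in V_\varrho$ with $\|y\|_X \ge R_\varrho$, so $\eta\circ\gamma \in \Gamma_\varrho$; picking an almost optimal $\gamma$ then forces $\sup_{V_\varrho}J\circ(\eta\circ\gamma) \le \beta_\varrho - \eps$, contradicting the definition of $\beta_\varrho$. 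The evenness of $J$ finally provides a pair $\pm y^*$ of critical points, nontrivial because $\beta_\varrho \ge \varrho > 0 = J(0)$. The main obstacle, and the only nonstandard ingredient, is precisely the construction of this equivariant $(wCPS)$ deformation: the classical pseudo--gradient descent has to be adapted to a scheme compatible with $\|\cdot\|_W$--convergence of Cerami sequences while preserving oddness, which is exactly the content of the deformation lemma already established in \cite{CPSSUP}. Once that tool is granted, the above min--max argument closes the proof.
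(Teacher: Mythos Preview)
Your proposal is correct and follows the same route as the paper, which does not give an independent proof but refers to \cite[Theorem 2.4]{CPSSUP}; your sketch reproduces precisely that argument --- the genus--based intersection $\gamma(V_\varrho)\cap(\M_\varrho\cap Z_\varrho)\ne\emptyset$ for the lower bound, the identity map for the upper bound, and the equivariant $(wCPS)$ deformation lemma of \cite[Lemma 2.2]{CPSSUP} for criticality. Nothing further is needed.
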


\begin{remark}
Since the vector space $V_{\varrho}$ in Theorem \ref{abstract} has finite dimension, 
then condition $(\cal{H}_{\varrho})$$(iv)$ implies that $\displaystyle\sup_{y\in V_{\varrho}} \J(y) <+\infty$. 
Moreover, such hypothesis still holds if we replace $\Vert\cdot\Vert_{X}$ with $\Vert\cdot\Vert_{W}$.
\end{remark}

Finally, if we can apply Theorem \ref{abstract} infinitely many times, then 
the following multiplicity abstract result can be stated, too.

\begin{corollary}
\label{multiple}
Let $J \in \mathcal{C}^1(X,\R)$ be an even functional such that $J(0) = 0$,
the $(wCPS)$ condition holds in $\R_+$ and 
assumption $({\cal H}_{\varrho})$ holds for all $\varrho > 0$.\\
Then, the functional $J$ possesses a sequence of critical points $(y_n)_n \subset X$ such that
$J(y_n) \nearrow +\infty$ as $n \nearrow +\infty$.
\end{corollary}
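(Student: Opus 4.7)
The plan is to iterate Theorem~\ref{abstract} along a divergent sequence of threshold values $\varrho$ and then extract a strictly monotone subsequence of critical levels.

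More precisely, first I would fix the sequence $\varrho_n := n$ for $n \in \N$. By assumption, $J \in \mathcal{C}^1(X,\R)$ is even, $J(0)=0$, $(wCPS)$ holds throughout $\R_+$, and $({\cal H}_\varrho)$ is in force for every $\varrho > 0$; in particular, it holds for $\varrho = \varrho_n$. Hence for each $n$ Theorem~\ref{abstract} applies with $\varrho = \varrho_n$, producing a pair of symmetric critical points $\pm z_n \in X$ whose critical level $\beta_n := J(z_n)$ satisfies
\[
\varrho_n \;\le\; \beta_n \;\le\; \sup_{y \in V_{\varrho_n}} J(y) \;<\; +\infty,
\]
the finiteness of the upper bound being a consequence of $V_{\varrho_n}$ having finite dimension together with condition $({\cal H}_{\varrho_n})(iv)$ (as already observed in the remark following Theorem~\ref{abstract}).

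The crucial point is now the lower bound $\beta_n \ge \varrho_n = n$, which forces $\beta_n \to +\infty$. Since every real sequence diverging to $+\infty$ admits a strictly increasing subsequence with the same limit, I would select indices $n_1 < n_2 < \cdots$ so that $\beta_{n_1} < \beta_{n_2} < \cdots$ and $\beta_{n_k} \to +\infty$; putting $y_k := z_{n_k}$ gives a sequence of critical points with $J(y_k) \nearrow +\infty$, which is exactly the required conclusion.

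I do not expect any genuine obstacle: all the analytic content — the deformation lemma, the linking structure encoded by the triple $V_\varrho$, $Z_\varrho$, $\M_\varrho$, and the use of the $(wCPS)$ condition to produce critical points rather than merely Palais--Smale sequences — has already been absorbed into Theorem~\ref{abstract}. The only minimal care needed is the extraction of a \emph{strictly} monotone subsequence to match the symbol $\nearrow$, which is an elementary fact about real sequences diverging to $+\infty$. Thus the corollary follows by a direct iteration of the symmetric mountain pass theorem over an unbounded family of mountain ranges indexed by $\varrho$.
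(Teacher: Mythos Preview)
Your proposal is correct and is precisely the argument the paper intends: the corollary is stated there as an immediate consequence of applying Theorem~\ref{abstract} for each $\varrho>0$, without a written proof. Your iteration over $\varrho_n=n$ and extraction of a strictly increasing subsequence of critical levels is exactly the natural way to make that implicit argument explicit.
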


Now, we proceed introducing the notations related to our specific issue.
If $\Omega \subset \R^N$ is an open bounded domain, $N\ge 2$,
we denote by:
\begin{itemize}
\item $L^q(\Omega)$ the Lebesgue space with
norm $|y|_q = \left(\int_\Omega|y|^q dx\right)^{1/q}$ if $1 \le q < +\infty$;
\item $L^\infty(\Omega)$ the space of Lebesgue--measurable 
and essentially bounded functions $y :\Omega \to \R$ with norm 
$\displaystyle |y|_{\infty} = \esssup_{\Omega} |y|$;
\item $W^{1,p}_0(\Omega)$ the Sobolev space equipped with the
norm $\|y\|_{W_0^{1, p}} = |\nabla y|_{p}$ if $1 \le p < +\infty$;
\item $\mbox{meas}(D)$ the usual Lebesgue measure of a measurable set $D$ in $\R^N$;
\item $|\cdot|$ the standard norm on any Euclidean space, as the dimension 
of the vector taken into account is clear and no ambiguity occurs.
\end{itemize}
Moreover, for any $m\in\N$, we say that $h:\Omega\times\R^m\to\R$ is a 
$\mathcal{C}^{k}$--Carath\'eodory function, $k\in\N\cup\lbrace 0\rbrace$, if
$\ h(\cdot, \nu)$ is measurable in $\Omega$ for all $\nu \in \R^m$ while
$h(x,\cdot)$ is $\mathcal{C}^k$ in $\R^m$ for a.e. $x \in \Omega$.
\medskip

Let $A,\ B :\Omega\times\R\times\R^N\to\R$ be such that the following conditions hold:
\begin{itemize}
\item[$(h_0)$]
$A(x,t,\xi)$ and $B(x,t,\xi)$ are $\mathcal{C}^1$--Carath\'eodory 
functions with partial derivatives as in \eqref{Ata}, respectively \eqref{Btb};
\item[$(h_1)$] two exponents $p_1>1$, $p_2 > 1$, and some positive functions 
$\Phi_i, \phi_i, \Psi_i, \psi_i \in \mathcal{C}^0(\R,\R)$, if $i \in \{0,1,2\}$,
exist such that
\begin{eqnarray}
\label{Aphi0}
\vert A(x, t, \xi)\vert &\leq&\Phi_0(t) +\phi_0(t)\vert\xi\vert^{p_1}  
\qquad\hbox{ a.e. in } \Omega, \mbox{ for all } (t,\xi)\in\R\times\R^N,\\
\vert A_t(x, t, \xi)\vert &\leq&\Phi_1(t) +\phi_1(t)\vert\xi\vert^{p_1}  
\qquad\hbox{ a.e. in } \Omega, \mbox{ for all } (t,\xi)\in\R\times\R^N,
\nonumber\\
\label{suA2}
\vert a(x, t, \xi)\vert &\leq&\Phi_2(t) +\phi_2(t)\vert\xi\vert^{p_1-1}  
\quad\hbox{ a.e. in } \Omega, \mbox{ for all } (t,\xi)\in\R\times\R^N,
\end{eqnarray}
and
\begin{eqnarray}
\label{Bpsi0}
\vert B(x, t, \xi)\vert &\leq&\Psi_0(t) +\psi_0(t)\vert\xi\vert^{p_2}  
\qquad\mbox{ a.e. in } \Omega, \mbox{ for all } (t,\xi)\in\R\times\R^N,\\
\vert B_t(x, t, \xi)\vert &\leq&\Psi_1(t) +\psi_1(t)\vert\xi\vert^{p_2} 
 \qquad\mbox{ a.e. in } \Omega, \mbox{ for all } (t,\xi)\in\R\times\R^N, \nonumber\\
\label{suB2}
\vert b(x, t, \xi)\vert&\leq& \Psi_2(t) +\psi_2(t)\vert\xi\vert^{p_2-1} 
 \quad\mbox{ a.e. in } \Omega, \mbox{ for all } (t,\xi)\in\R\times\R^N.
\end{eqnarray}
\end{itemize}

Furthermore, let $G :\Omega\times\R\times\R\to\R$ be a map which satisfies 
the following hypotheses: 
\begin{itemize}
\item[$(g_0)$] 
$G(x,u,v)$ is a $\mathcal{C}^1$--Caratheodory function with partial derivatives as in \eqref{GuGv},
such that
\[
G(\cdot, 0, 0)\in L^{\infty}(\Omega)
\qquad \hbox{and}\qquad
G_u(x, 0, 0) = G_v(x, 0, 0) = 0 \quad \mbox{ for a.e. } x\in\Omega;
\]
\item[$(g_1)$] a constant $\sigma>0$ and some exponents $q_i \geq 1$, $t_i \geq 0$,
if $i\in\{1, 2\}$, exist such that
\begin{align}      \label{gucr}
\vert G_u(x,u,v)\vert&\leq \sigma(1 +\vert u\vert^{q_1 -1} +\vert v\vert^{t_1}) 
\quad\mbox{ for a.e. $x\in\Omega$, for all }(u, v)\in\R^2,\\     
\nonumber
\vert G_v(x,u,v)\vert&\leq \sigma(1 +\vert u\vert^{t_2} +\vert v\vert^{q_2 -1}) 
\quad\mbox{ for a.e. $x\in\Omega$, for all }(u, v)\in\R^2.
\end{align}
\end{itemize}

\begin{remark}\label{sucrit}
Hypotheses $(g_0)$--$(g_1)$, the Mean Value Theorem and direct computations ensure the existence 
of a positive constant $\sigma_1 > 0$ such that
\begin{equation}    \label{Gsigma}
\vert G(x,u,v)\vert\leq \sigma_1\left(1+\vert u\vert^{q_1} +\vert v\vert^{t_1}\vert u\vert 
+ \vert u\vert^{t_2}\vert v\vert + \vert v\vert^{q_2}\right)
\quad\mbox{ for a.e. $x\in\Omega$, for all }(u,v)\in\R^2.
\end{equation}
Now, taking any couple of real numbers $t_3$, $t_5 >1$, 
from Young inequality we obtain
\begin{equation} \label{crits12}
|v|^{t_1} |u| \ \le \ |u|^{t_3} + |v|^{t_4}, \qquad 
|u|^{t_2}|v| \ \le \ |u|^{t_6} + |v|^{t_5} \quad \hbox{for all $(u,v) \in \R^2$},
\end{equation}
where, for simplicity, we set 
\begin{equation} \label{crits120}
t_4 :=\ \frac{t_1 t_3}{t_3-1} \ge t_1\quad 
\hbox{and}\quad t_6 :=\ \frac{t_2 t_5}{t_5-1} \ge t_2.
\end{equation}
Thus, from \eqref{Gsigma} and \eqref{crits12} we infer that
\begin{equation}    \label{Gsigmamax}
| G(x,u,v)| \leq \sigma_2 (1 +\vert u\vert^{\overline{q}_1} +\vert v\vert^{\overline{q}_2})
\quad \hbox{for a.e. $x \in \Omega$, for all $(u,v) \in \R^2$},
\end{equation}
with 
\begin{equation}    \label{Ggrow}
\overline{q}_1 :=\max\lbrace q_1, t_3, t_6\rbrace\qquad
\hbox{and}\qquad
\overline{q}_2 := \max\lbrace q_2, t_4, t_5\rbrace, 
\end{equation}
for a suitable constant $\sigma_2 > 0$.
\end{remark}

In order to recall some features shared by the subcritical systems in \cite{CSS}
and \cite{CS}, if needed, here we introduce similar notations.

For each $ i\in\lbrace 1, 2\rbrace$
let $p_i > 1$ be as in assumption $(h_1)$ and 
let us consider the related Sobolev space
\[
W_i = W_0^{1, p_i}(\Omega)\quad \hbox{with norm $\Vert \cdot\Vert_{W_i} = \Vert \cdot\Vert_{W_0^{1, p_i}}$.}
\]
From the Sobolev Embedding Theorem, for any $r\in[1, p_i^{\ast}]$ 
with $p_i^{\ast} = \frac{Np_i}{N-p_i}$ if $N>p_i$, 
or any $r\in[1, +\infty[$ if $p_i \ge N$, $W_i$ is
continuously embedded in $L^{r}(\Omega)$, i.e., $\tau_{i,r} > 0$ exists such that
\begin{equation}   \label{Sobpi}
\vert y\vert_r \leq\tau_{i,r}\Vert y\Vert_{W_i} \quad \mbox{ for all } y\in W_i.
\end{equation}
Furthermore, if $p_i\geq N$, we place
\[
p^*_i = +\infty\quad \hbox{and} \quad \frac{1}{p^*_i} = 0.
\]

Here, the notation $(W, \|\cdot\|_W)$, introduced for the abstract
setting at the beginning of this section, is referred to our problem with 
\begin{equation}     \label{Wdefn1}
W =W_1\times W_2\qquad \hbox{and}\qquad
\Vert (u, v)\Vert_W = \| u\|_{W_1} +\| v\|_{W_2}\quad
\hbox{if $(u,v) \in W$.}
\end{equation}
Since $(W_i, \Vert\cdot\Vert_{W_i})$ is a reflexive Banach space for both $i\in\{1, 2\}$,
so is $\left(W, \Vert\cdot\Vert_W\right)$ in \eqref{Wdefn1}.

Moreover, we consider the Banach space $(X,\|\cdot\|_X)$ defined as 
\begin{equation}     \label{Xdefn1}
X =X_1\times X_2 \qquad \hbox{with}\qquad
\Vert (u, v)\Vert_X = \| u\|_{X_1} +\| v\|_{X_2}\quad
\hbox{if $(u,v) \in X$,}
\end{equation}
where
\begin{equation}    \label{Xidefn}
X_1:= W_1\cap L^{\infty}(\Omega)\quad\mbox{ and }\quad X_2:= W_2\cap L^{\infty}(\Omega)
\end{equation}
are endowed with the norms
\[
\Vert u\Vert_{X_1} = \Vert u\Vert_{W_1} + \vert u\vert_{\infty}\; \mbox{ if } u\in X_1
\quad\mbox{ and }\quad\Vert v\Vert_{X_2} = \Vert v\Vert_{W_2} + \vert v\vert_{\infty}\; 
\mbox{ if } v\in X_2.
\]
Setting 
\[
L:=L^{\infty}(\Omega)\times L^{\infty}(\Omega)\quad \hbox{with}\quad 
\Vert (u, v)\Vert_{L} =|u|_{\infty} +|v|_{\infty},
\]
we have that $X$ in \eqref{Xdefn1} can also be written as 
\begin{equation}     \label{Xdefn}
X = W\cap L 
\end{equation}
and its norm is such that
\[   
\Vert (u, v)\Vert_X = \Vert (u, v)\Vert_W + \Vert (u, v)\Vert_L .
\]

Clearly, from \eqref{Xidefn}, for both $i\in\{1, 2\}$ we have that the continuous 
embeddings $X_i \hookrightarrow W_i$ and $X_i \hookrightarrow L^\infty(\Omega)$
hold. 

\begin{remark}
If $p_i > N$ for both $i\in\{1, 2\}$, then the embedding 
$W_i \hookrightarrow L^\infty(\Omega)$ means that $X_i=W_i$. Thus, $X = W$ and
the classical Mountain Pass Theorems in \cite{AR} may be applied.
\end{remark}

Firstly, we note that if conditions $(h_0)$--$(h_1)$, $(g_0)$--$(g_1)$ hold,
then direct computations imply that $\J(u,v)$ in \eqref{functional}
is well defined for all $(u, v)\in X$. Moreover,
taking any $(u, v)$, $(w, z)\in X$, the G\^ateaux differential 
of functional $\J$ in $(u,v)$ along the direction $(w,z)$ is given by
\begin{equation}     \label{diff}
\begin{split}
d\J(u, v)[(w, z)] = &\int_{\Omega} a(x, u, \nabla u)\cdot\nabla w \ dx 
+ \int_{\Omega} A_u(x, u, \nabla u) w \ dx + \int_{\Omega} b(x, v, \nabla v)\cdot \nabla z \ dx\\
&+ \int_{\Omega} B_v(x, v, \nabla v) z \ dx-\int_{\Omega} G_u(x, u, v) w \ dx - \int_{\Omega} G_v(x, u, v) z \ dx.
\end{split}
\end{equation}
For simplicity, we set
\[
\begin{split}
&\frac{\partial\J}{\partial u}(u, v): 
w\in X_1\mapsto \frac{\partial\J}{\partial u}(u, v)[w] = d\J(u, v)[(w, 0)]\in\R, \\
&\frac{\partial\J}{\partial v}(u, v): 
z\in X_2\mapsto \frac{\partial\J}{\partial v}(u, v)[z] = d\J(u, v)[(0, z)]\in\R;
\end{split}
\]
hence, from \eqref{diff}, it follows that
\begin{equation}     \label{dJw}
\begin{split}
\frac{\partial\J}{\partial u}(u, v) [w] = &\int_{\Omega} a(x, u, \nabla u)\cdot\nabla w\ dx 
+ \int_{\Omega} A_u(x, u, \nabla u) w \ dx - \int_{\Omega} G_u(x, u, v) w\ dx
\end{split}
\end{equation}
and
\begin{equation}    \label{dJz}
\begin{split}
\frac{\partial\J}{\partial v}(u, v) [z] = 
&\int_{\Omega} b(x, v, \nabla v)\cdot\nabla z\ dx + 
\int_{\Omega} B_v(x, v, \nabla v) z \ dx - \int_{\Omega} G_v(x, u, v) z \ dx.
\end{split}
\end{equation}
Taking $(u, v)\in X$, since $d\J(u, v)\in X^{\prime}$, 
then 
\[
\frac{\partial\J}{\partial u}(u, v)\in X_1^{\prime}, \qquad
\frac{\partial\J}{\partial v}(u, v)\in X_2^{\prime}
\]
and
\begin{equation}    \label{dJz1}
d\J(u, v)[(w, z)] = \frac{\partial\J}{\partial u}(u, v)[ w] 
+ \frac{\partial\J}{\partial v}(u, v)[z] \quad \mbox{ for all } (w, z)\in X.
\end{equation}
Furthermore, above remarks and direct computations give not only the estimates
\begin{equation}    \label{dJzstar}
\left\|\frac{\partial\J}{\partial u}(u, v)\right\|_{X_1'} \le \|d\J(u, v)\|_{X'}\quad\mbox{ and }\quad
\left\|\frac{\partial\J}{\partial v}(u, v)\right\|_{X_2'} \le \|d\J(u, v)\|_{X'},  
\end{equation}
but also
\[ 
\|d\J(u, v)\|_{X'} \le \left\|\frac{\partial\J}{\partial u}(u, v)\right\|_{X_1'} +
\left\|\frac{\partial\J}{\partial v}(u, v)\right\|_{X_2'}.
\]
At last, from \eqref{dJz1} we infer that
\[
d\J(u, v) = 0 \;\hbox{in $X$}\quad\iff\quad
\frac{\partial\J}{\partial u}(u, v)= 0 \;\hbox{in $X_1$}
\;\hbox{ and }\; \frac{\partial\J}{\partial v}(u,v) = 0 \;\hbox{in $X_2$.} 
\]

Finally, we can state the regularity of functional $\J$ defined in \eqref{functional}
(for the proof, see \cite[Proposition 3.5]{CS}).

\begin{proposition}\label{smooth1}
Assume that conditions $(h_0)$--$(h_1)$, $(g_0)$--$(g_1)$ hold. 
Let $((u_n, v_n))_n \subset X$ and $(u, v) \in X$ be such that
\[
(u_n, v_n)\to (u, v) \mbox{ in $W\quad $ and}  \quad (u_n, v_n)\to (u, v) 
\mbox{ a.e. in } \Omega\quad \mbox{ if } n\to+\infty. 
\]
If $M > 0$ exists such that
\[
\vert u_n\vert_{\infty}\leq M \quad \mbox{ and } \quad \vert v_n\vert_{\infty}\leq M \quad 
\hbox{for all $n\in\N$,}
\]
then
\[
\J(u_n, v_n) \to \J(u, v) \quad \mbox{ and } \quad 
\Vert d\J(u_n, v_n) - d\J(u, v)\Vert_{X^{\prime}}\to 0 \ \mbox{ as } n\to+\infty.
\]
Hence, $\J$ is a $\mathcal{C}^1$ functional on $X$ with Fr\'echet differential  
defined as in (\ref{diff}).
\end{proposition}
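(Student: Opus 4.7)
The plan is to exploit the uniform $L^\infty$--bound $|u_n|_\infty,|v_n|_\infty\le M$ to turn the continuous coefficients $\Phi_i,\phi_i,\Psi_i,\psi_i$ in $(h_1)$ into uniform constants: by continuity there exists $C_M>0$ with $\Phi_i(u_n),\phi_i(u_n),\Psi_i(v_n),\psi_i(v_n)\le C_M$ for every $n$ and a.e.\ in $\Omega$, so that \eqref{Aphi0}--\eqref{suB2} reduce to bounds depending only on $|\nabla u_n|^{p_1}$ or $|\nabla v_n|^{p_2}$. Since $u_n\to u$ in $W_1$ and $v_n\to v$ in $W_2$, the families $\{|\nabla u_n|^{p_1}\}$ and $\{|\nabla v_n|^{p_2}\}$ are equi-integrable in $L^1(\Omega)$, and after extracting a subsequence so that $\nabla u_n\to\nabla u$ and $\nabla v_n\to\nabla v$ almost everywhere, the $\mathcal{C}^1$ regularity of $A$, $B$, $G$ yields pointwise a.e.\ convergence of every integrand appearing in \eqref{functional} and \eqref{diff}.

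With these ingredients, $\J(u_n,v_n)\to\J(u,v)$ follows by applying Vitali's convergence theorem to the integrals of $A(x,u_n,\nabla u_n)$ and $B(x,v_n,\nabla v_n)$, while dominated convergence takes care of $\int_\Omega G(x,u_n,v_n)\,dx$, the integrand being bounded by a constant depending only on $M$ thanks to Remark~\ref{sucrit}. For the differential I split $d\J(u_n,v_n)[(w,z)]-d\J(u,v)[(w,z)]$ into the six integrals in \eqref{diff} and estimate each uniformly over $(w,z)\in X$ with $\|(w,z)\|_X\le 1$. The principal parts involving $a$ and $b$ are controlled by H\"older in the dual exponents $p_i/(p_i-1)$, reducing the task to the strong convergences $a(\cdot,u_n,\nabla u_n)\to a(\cdot,u,\nabla u)$ in $L^{p_1/(p_1-1)}(\Omega)$ and $b(\cdot,v_n,\nabla v_n)\to b(\cdot,v,\nabla v)$ in $L^{p_2/(p_2-1)}(\Omega)$; these follow once more from Vitali, since by \eqref{suA2} and \eqref{suB2} the relevant powers are dominated by $C'_M(1+|\nabla u_n|^{p_1})$ and $C'_M(1+|\nabla v_n|^{p_2})$. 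The lower--order terms $A_u$, $B_v$, $G_u$, $G_v$ pair in $L^1$ against $|w|_\infty\le\|w\|_{X_1}$ or $|z|_\infty\le\|z\|_{X_2}$, and the $L^1$--convergence of the corresponding differences is verified by Vitali for $A_u$ and $B_v$ (using the same $L^1$ dominants), and by dominated convergence for $G_u$ and $G_v$ via $(g_1)$ and the $L^\infty$--bound.

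The delicate point is that a.e.\ convergence of $\nabla u_n$ and $\nabla v_n$ is available only along a subsequence. This is handled by the standard subsequence argument: every subsequence of $((u_n,v_n))_n$ admits a further subsequence along which the above reasoning applies, and uniqueness of the limit then forces $\J(u_n,v_n)\to\J(u,v)$ and $d\J(u_n,v_n)\to d\J(u,v)$ in $X'$ along the full sequence. Combined with the G\^ateaux formula \eqref{diff}, this sequential continuity of $d\J$ on $X$ upgrades the derivative to a Fr\'echet differential, so that $\J\in\mathcal{C}^1(X,\R)$.
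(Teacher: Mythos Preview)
Your argument is correct and follows the standard route that the cited reference \cite[Proposition~3.5]{CS} takes: the paper itself does not prove the statement but defers entirely to that source, and the proof there proceeds exactly as you outline --- the uniform $L^\infty$--bound reduces $(h_1)$ to estimates by constants times $|\nabla u_n|^{p_1}$ or $|\nabla v_n|^{p_2}$, equi-integrability comes from the strong $W_i$--convergence, Vitali (or a generalized dominated convergence theorem) handles the principal and $A_t$, $B_t$ terms, and plain dominated convergence suffices for the $G$, $G_u$, $G_v$ terms.
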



\section{The set up for the weak Cerami--Palais--Smale condition}
\label{sec_wcps}

In order to prove some more properties of functional $\J$ in \eqref{functional}, 
let $p_1>1$ and $p_2>1$ as in the earlier hypothesis $(h_1)$. Then, assume that 
$R \geq 1$ exists such that the following conditions hold:
\begin{itemize}
\item[$(h_2)$] some constants $\eta_1, \eta_2> 0$ exist such that
\begin{eqnarray}   
&&A(x, t, \xi)\leq\eta_1 a(x, t, \xi)\cdot\xi \quad \mbox{a.e. in $\Omega$ if }\ \vert(t, \xi)\vert\geq R,
\label{eta1}\\
&&B(x, t, \xi)\leq\eta_1 b(x, t, \xi)\cdot\xi \quad \mbox{a.e. in $\Omega$ if }\ \vert(t, \xi)\vert\geq R,
\label{eta12}
\end{eqnarray}
and
\begin{equation}   \label{2sup}
\sup_{\vert (t, \xi)\vert\leq R} \vert A(x, t, \xi)\vert\leq\eta_2, 
\qquad \sup_{\vert (t, \xi)\vert\leq R}\vert B(x, t, \xi)\vert\leq\eta_2 \quad \mbox{ a.e. in } \Omega;
\end{equation}
\item[$(h_3)$] some exponents $s_1, s_2\geq 0$ and a constant $\mu_0 > 0$ exist so that
\[
\begin{split}
&a(x, t, \xi)\cdot\xi\geq \mu_0 (1 +\vert t\vert^{s_1p_1})\vert\xi\vert^{p_1} 
\quad \mbox{ a.e. in } \Omega, \mbox{ for all } (t, \xi)\in\R\times\R^N,\\
&b(x, t, \xi)\cdot\xi\geq \mu_0 (1+\vert t\vert^{s_2p_2})\vert\xi\vert^{p_2} 
\quad \mbox{ a.e. in } \Omega, \mbox{ for all } (t, \xi)\in\R\times\R^N;
\end{split}
\]
\item[$(h_4)$] a constant $\mu_1 >0$ exists such that
\[
\begin{split}
& a(x, t, \xi)\cdot\xi + A_t(x, t, \xi) t \geq \mu_1 a(x, t, \xi)\cdot\xi 
\quad \mbox{ a.e. in } \Omega \mbox{ if } \vert(t, \xi)\vert\geq R,\\
& b(x, t, \xi)\cdot\xi + B_t(x, t, \xi) t \geq \mu_1 b(x, t, \xi)\cdot\xi 
\;\quad \mbox{ a.e. in } \Omega \mbox{ if } \vert(t, \xi)\vert\geq R; 
\end{split}
\]
\item[$(h_5)$] some constants $\theta_1, \theta_2, \mu_2 >0$ exist such that
\begin{equation}   \label{thi<pi}
\theta_1 <\frac{1}{p_1},\qquad\theta_2 <\frac{1}{p_2},
\end{equation}
and
\[
\begin{split}
A(x, t, \xi) - \theta_1 a(x, t,\xi)\cdot\xi -\theta_1 A_t(x, t, \xi) t &\geq\mu_2 a(x, t, \xi)\cdot\xi 
\quad \mbox{ a.e. in } \Omega \mbox{ if } \vert(t, \xi)\vert\geq R,\\
B(x, t, \xi) - \theta_2 b(x, t,\xi)\cdot\xi - \theta_2 B_t(x, t, \xi) t &\geq\mu_2  b(x, t, \xi)\cdot\xi 
\,\quad \mbox{ a.e. in } \Omega \mbox{ if } \vert(t, \xi)\vert\geq R;\\
\end{split}
\]
\item[$(h_6)$] for all $\xi, \xi^{\prime}\in\R^N$, with $\xi\neq\xi^{\prime}$, it is 
\[
\begin{split}
[a(x, t, \xi) -a(x, t, \xi^{\prime})] &\cdot [\xi-\xi^{\prime}] >0 
\quad \mbox{ a.e. in } \Omega, \ \mbox{ for all } t\in\R,\\
[b(x, t, \xi) -b(x, t, \xi^{\prime})] &\cdot [\xi-\xi^{\prime}] >0 
\quad \mbox{ a.e. in } \Omega, \ \mbox{ for all } t\in\R;
\end{split}
\]
\item[$(g_2)$] for $i\in\{1, 2\}$, taking $p_i$ as in hypothesis $(h_1)$, $q_i, t_i$ 
as in assumption $(g_1)$ and $s_i$ as in $(h_3)$, we assume that
\begin{equation}    \label{crit_exp}
1 \le q_1 < p_1^{\ast}(s_1 +1),\qquad 1 \le q_2 < p_2^{\ast}(s_2+1),
\end{equation}
and
\begin{equation}    \label{crit_expi}
0 \le t_1 < \frac{p_1}{N} \left(1 - \frac{1}{p^*_1(s_1+1)}\right) p_2^*(s_2+1), \quad
0 \le t_2 < \frac{p_2}{N} \left(1 - \frac{1}{p^*_2(s_2+1)}\right) p_1^*(s_1+1);
\end{equation}
\item[$(g_3)$] taking $\theta_1, \theta_2$ as in $(h_5)$, we assume that
\[
0 < G(x, u, v)\leq\theta_1 G_u(x, u, v) u +\theta_2 G_v(x, u, v) v 
\quad \mbox{ a.e. in } \Omega, \ \mbox{ if } \vert (u, v)\vert\geq R.
\]
\end{itemize}

\begin{remark}  \label{spiega}
Assumption \eqref{crit_exp} shows up the supercritical nature of our problem 
which vanishes if $s_1 = s_2 =0$ as it reduces exactly to the subcritical condition $(g_1)$ in \cite{CSS,CS}. 
However, in general, if one or both $s_1 >0$, $s_2>0$ hold, then
a supercritical growth on the nonlinear term $G(x, u, v)$ is allowed.
Moreover, we emphasize that the growth hypothesis $(g_2)$ is needed to prove 
that the functional $\J$ satisfies the $(wCPS)$ condition, but has not been required 
for the variational principle stated in Proposition \ref{smooth1}. 
\end{remark}

\begin{remark}    \label{remmu1}
If we consider hypothesis $(h_4)$ with $t = 0$ and $\vert\xi\vert\geq R$,
then assumption $(h_3)$ gives $\mu_1\leq 1$. Moreover, we note that $(h_4)$ and $(h_5)$ yield
\begin{align}    \label{Anew}
&A(x, t, \xi)\geq(\theta_1\mu_1 +\mu_2)\ a(x, t, \xi)\cdot\xi 
\quad \mbox{ a.e. in } \Omega \ \mbox{ if } \vert(t, \xi)\vert\geq R,\\    \label{Bnew}
&B(x, t, \xi)\geq(\theta_2\mu_1 +\mu_2)\ b(x, t, \xi)\cdot\xi 
\quad \mbox{ a.e. in } \Omega \ \mbox{ if } \vert(t, \xi)\vert\geq R
\end{align}
which, together with condition $(h_3)$, imply that
\begin{align}   \label{A13}
&A(x, t, \xi)\geq \mu_0 (\theta_1 \mu_1 +\mu_2)(1 +\vert t\vert^{s_1p_1})\vert\xi\vert^{p_1}\geq 0 
\quad \mbox{ a.e. in } \Omega \ \mbox{ if } \vert(t, \xi)\vert\geq R,\\    \label{B24}
&B(x, t, \xi)\geq \mu_0 (\theta_2 \mu_1+\mu_2)(1 +\vert t\vert^{s_2p_2})\vert\xi\vert^{p_2}\geq 0 
\quad \mbox{ a.e. in } \Omega \ \mbox{ if } \vert(t, \xi)\vert\geq R.
\end{align}
Hence, from \eqref{2sup} and \eqref{A13}, respectively \eqref{B24}, and direct computations 
we have that
\begin{align} \label{A13th}
&A(x, t, \xi)\geq \mu_0 (\theta_1 \mu_1 +\mu_2)(1 +\vert t\vert^{s_1p_1})\vert\xi\vert^{p_1}-\eta_3 
\quad \mbox{ a.e. in } \Omega \ \mbox{ for all } \ (t, \xi)\in\R\times\R^N,\\ \label{B24th}
&B(x, t, \xi)\geq \mu_0 (\theta_2 \mu_1 +\mu_2)(1 +\vert t\vert^{s_2p_2})\vert\xi\vert^{p_2}-\eta_3 
\quad \mbox{ a.e. in } \Omega \ \mbox{ for all } \ (t, \xi)\in\R\times\R^N,
\end{align}
for a suitable constant $\eta_3>0$. 
On the other hand, from $(h_2)$ and \eqref{suA2}, respectively \eqref{suB2}, 
direct computations imply that
\begin{align}   \label{perh0A}
A(x,t,\xi)&\leq \eta_1\Phi_2(t) + \eta_1 (\Phi_2(t) +\phi_2(t))\vert\xi\vert^{p_1} +\eta_4 
\,\quad \mbox{ a.e. in } \Omega \ \mbox{ for all } (t, \xi)\in\R\times\R^N,\\    \label{perh0B}
B(x, t, \xi)&\leq \eta_1\Psi_2(t) + \eta_1 (\Psi_2(t) +\psi_2(t)) \vert\xi\vert^{p_2} + \eta_4 
\quad \mbox{ a.e. in } \Omega \ \mbox{ for all } (t, \xi)\in\R\times\R^N,
\end{align}
for a suitable constant $\eta_4>0$. 
Then, if hypotheses $(h_2)$--$(h_5)$ hold, 
the growth conditions on $A(x, t, \xi)$ and $B(x, t, \xi)$ stated in \eqref{Aphi0} and \eqref{Bpsi0}
are a direct consequence of \eqref{suA2}, respectively \eqref{suB2}, as 
\eqref{Aphi0} follows from \eqref{A13th} and \eqref{perh0A} while \eqref{Bpsi0}
follows from \eqref{B24th} and \eqref{perh0B}. Hence, even if \eqref{Aphi0} and \eqref{Bpsi0} are 
part of assumption $(h_1)$, they can be ruled--out from the hypotheses if $(h_2)$--$(h_5)$ hold, too.
\end{remark}

\begin{remark}
In the set of hypotheses $(h_2)$ and $(h_5)$ a more precise growth condition
on both the functions $A(x, t, \xi)$ and $B(x, t, \xi)$ can be pointed out. 
In fact, \eqref{eta1}, respectively \eqref{eta12}, $(h_5)$ and direct calculations imply that
\begin{align}     \label{Alast}
&\left(\frac{\eta_1 - \theta_1 -\mu_2}{\eta_1\theta_1}\right) A(x, t, \xi)\geq A_t(x, t, \xi) t  
\quad \mbox{ a.e. in } \Omega \ \mbox{ if } \vert(t, \xi)\vert\geq R\\    \label{Blast}
&\left(\frac{\eta_1 - \theta_2 -\mu_2}{\eta_1\theta_2}\right) B(x, t, \xi)\geq B_t(x, t, \xi) t  
\quad \mbox{ a.e. in } \Omega \ \mbox{ if } \vert(t, \xi)\vert\geq R.
\end{align}
Now, taking $t=0$ and $|\xi|\geq R$ in both $(h_2)$ and $(h_5)$, 
without loss of generality we can choose $\mu_2$ small enough so that
\[
\eta_1 - \theta_1 -\mu_2>0\quad\mbox{ and }\quad \eta_1 - \theta_2 -\mu_2 >0.
\]
Thus, from \eqref{Aphi0}, \eqref{A13}, \eqref{Alast}, respectively
\eqref{Bpsi0}, \eqref{B24}, \eqref{Blast}, and direct computations we obtain that
\begin{align*} 
&A(x, t, \xi)\leq \eta_5 \vert t\vert^{\frac{\eta_1 -\theta_1 -\mu_2}{\eta_1\theta_1}}\vert\xi\vert^{p_1} 
\quad \mbox{ a.e. in } \Omega \mbox{ if } \vert t\vert\geq 1 \mbox{ and } \vert\xi\vert\geq R,\\ 
&B(x,t,\xi) \leq \eta_5 \vert t\vert^{\frac{\eta_1 - \theta_2 -\mu_2}{\eta_1\theta_2}}\vert\xi\vert^{p_2} 
\quad \mbox{ a.e. in } \Omega \mbox{ if } \vert t\vert\geq 1 \mbox{ and } \vert\xi\vert\geq R,
\end{align*}
for a suitable $\eta_5>0$, and then from \eqref{Anew}, respectively \eqref{Bnew},
we have that
\begin{align} \label{a_eta}
&a(x, t, \xi)\cdot\xi\leq\frac{\eta_5}{\theta_1\mu_1+\mu_2}
\vert t\vert^{\frac{\eta_1-\theta_1-\mu_2}{\eta_1\theta_1}}\vert\xi\vert^{p_1} 
\quad \mbox{ a.e. in } \Omega \mbox{ if } \vert t\vert\geq 1 
\mbox{ and } \vert\xi\vert\geq R,\\  \label{b_eta}
&b(x, t, \xi)\cdot\xi\leq\frac{\eta_5}{\theta_2\mu_1+\mu_2}
\vert t\vert^{\frac{\eta_1-\theta_2-\mu_2}{\eta_1\theta_2}}\vert\xi\vert^{p_2} 
\quad \mbox{ a.e. in } \Omega \mbox{ if } \vert t\vert\geq 1 \mbox{ and } \vert\xi\vert\geq R.
\end{align}
Finally, from \eqref{a_eta}, \eqref{b_eta} and assumption $(h_3)$, we infer that
\begin{equation}   \label{stimamin}
0\leq p_1 s_1 \le \frac{1}{\theta_1} -\frac{\theta_1 + \mu_2}{\eta_1\theta_1}
\quad\mbox{ and }\quad0\leq p_2 s_2 \le \frac{1}{\theta_2} -\frac{\theta_2+\mu_2}{\eta_1\theta_2}.
\end{equation}
We note that, if
\begin{equation} \label{si_pi}
0\leq s_1 <\frac{1}{\theta_1 p_1}\quad\mbox{ and }\quad 0\leq s_2 <\frac{1}{\theta_2 p_2},
\end{equation}
then we can always choose $\eta_1$ in $(h_2)$ large enough 
so that \eqref{stimamin} is satisfied.
\end{remark}

\begin{remark}
Conditions in \eqref{si_pi} not only relate the exponents $s_1, s_2$ 
provided in assumption $(h_3)$ with the powers $p_1, p_2 >1$ used in the growth conditions $(h_1)$ and 
$\theta_1, \theta_2$ claimed in \eqref{thi<pi}, but also they tell us how far we can take it. 
In particular, it implies that in our set of hypotheses, a supercritical growth is allowed as long as $s_1, s_2$ 
cover the whole range stated in \eqref{si_pi}.
\end{remark}

\begin{remark}  \label{rmkPerera0}
Assumptions $(g_0)$--$(g_1)$, $(g_3)$ and direct calculations imply that for each $i\in\{1, 2\}$ a function
$h_i\in L^{\infty}(\Omega)$, $h_i(x) >0$ for a.e. $x\in\Omega$,
exists such that
\begin{align*}
G(x,u,0)& \geq h_1(x) |u|^{\frac{1}{\theta_1}}\quad \mbox{ for a.e. } x\in\Omega, \mbox{ if } |u| \ge R,\\
G(x,0,v)& \geq h_2(x) |v|^{\frac{1}{\theta_2}}\quad \mbox{ for a.e. } x\in\Omega, \mbox{ if } |v| \ge R.
\end{align*}
Thus, from \eqref{Gsigma} we obtain that 
\begin{equation}   \label{Gthbis}
\begin{split}
&h_1(x) |u|^{\frac{1}{\theta_1}} - \sigma_3 \le G(x,u,0) \le \sigma_1 (1+|u|^{q_1})
\quad \mbox{ for a.e. } x\in\Omega, \mbox{ for all } u\in\R,\\
&h_2(x) |v|^{\frac{1}{\theta_2}}- \sigma_3 \le 
G(x,0,v) \le \sigma_1 (1+|v|^{q_2}) \quad \mbox{ for a.e. } x\in\Omega, \mbox{ for all } v\in\R
\end{split}
\end{equation}
for a positive constant $\sigma_3 >0$. Then, \eqref{si_pi} and \eqref{Gthbis} imply that
\[
p_1 s_1 <\ \frac{1}{\theta_1}\ \le q_1, \qquad
p_2 s_2 <\ \frac{1}{\theta_2}\ \le q_2,
\]
while from \eqref{thi<pi} it is
\[
p_1 <\ \frac{1}{\theta_1}, \qquad
p_2 <\ \frac{1}{\theta_2}.
\]
So, if condition \eqref{crit_exp} holds, 
without loss of generality we can take $q_1$, $q_2$ in $(g_1)$ large enough 
so that 
\begin{equation}    \label{esse1+1}
p_1(s_1+1) < q_1 < p_1^*(s_1+1)\quad\mbox{ and }\quad 
p_2(s_2+1) < q_2 < p_2^*(s_2+1).
\end{equation}
\end{remark}

In order to show that the $(wCPS)$ condition holds also in our supercritical setting, 
we need some preliminary results. 
\smallskip

Firstly, we note that,
taking $p > 1$ and $s \geq 0$, then straightforward computations give
\begin{equation}    \label{nablaconto}
\vert\nabla(\vert y\vert^{s} y) \vert^{p} = (s +1)^{p} \vert y\vert^{s p}\vert\nabla y\vert^{p} 
\quad \mbox{ a.e. in } \Omega, \ \mbox{ for all } y\in W^{1,p}_0(\Omega).
\end{equation}
Such an equality allows us to prove the following Rellich--type embedding theorem
(for the proof, see \cite[Lemma 3.8]{CPSSUP}).

\begin{lemma}    \label{lemma_s}
Taking $1 <p < N$ and $s >0$, let $(y_n)_n\subset W^{1,p}_0(\Omega) \cap L^\infty(\Omega)$ 
be a sequence such that
\[
\Bigg(\int_{\Omega}(1 +\vert y_n\vert^{sp}) \vert\nabla y_n\vert^{p}dx\Bigg)_n 
\quad \mbox{ is bounded.}
\]
Then, $y\in W^{1,p}_0(\Omega)$ exists such that $\vert y\vert^{s} y\in W^{1,p}_0(\Omega)$, 
too, and, up to subsequences, we have that
\[
\begin{split}
&y_n\ \rightharpoonup\ y  \quad\mbox{ weakly in } W^{1,p}_0(\Omega),\\  
&\vert y_n\vert^{s} y_n\ \rightharpoonup\ \vert y\vert^{s} y\quad
 \mbox{ weakly in } W^{1,p}_0(\Omega),\\
&y_n\ \rightarrow\ y \quad \mbox{ strongly in } L^{r}(\Omega) 
\ \mbox{ for each } r \in [1, p^{\ast}(s+1)[,\\
&y_n\ \rightarrow\ y \quad \mbox{ a.e. in } \Omega.
\end{split}
\]
\end{lemma}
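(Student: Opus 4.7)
The plan is to exploit the trivial inequality $1 \le 1 + |y_n|^{sp}$ to control the standard Sobolev norm of $y_n$, then to exploit the algebraic identity \eqref{nablaconto} to control the Sobolev norm of the composed function $|y_n|^s y_n$, and finally to combine the two pieces of information via the Rellich--Kondrachov theorem and a Brezis--Lieb type argument to upgrade convergence to $L^r$ for $r<p^\ast(s+1)$.

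In detail, first I would observe that the boundedness of $\int_\Omega(1+|y_n|^{sp})|\nabla y_n|^p\,dx$ immediately gives a uniform bound on $\int_\Omega|\nabla y_n|^p\,dx$, hence $(y_n)_n$ is bounded in $W_0^{1,p}(\Omega)$. By reflexivity of $W_0^{1,p}(\Omega)$ and the Rellich--Kondrachov theorem, a subsequence (still denoted $(y_n)_n$) satisfies $y_n\rightharpoonup y$ weakly in $W_0^{1,p}(\Omega)$, $y_n\to y$ strongly in $L^r(\Omega)$ for every $r\in[1,p^\ast[$, and $y_n\to y$ a.e. in $\Omega$, for some $y\in W_0^{1,p}(\Omega)$.

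Next, applying the identity \eqref{nablaconto} to $y_n$, I get
\[
\int_\Omega|\nabla(|y_n|^s y_n)|^p\,dx\;=\;(s+1)^p\int_\Omega|y_n|^{sp}|\nabla y_n|^p\,dx,
\]
which is again bounded by hypothesis. Since $y_n\in W_0^{1,p}(\Omega)\cap L^\infty(\Omega)$, the chain rule ensures $|y_n|^s y_n\in W_0^{1,p}(\Omega)$, so $(|y_n|^s y_n)_n$ is bounded in $W_0^{1,p}(\Omega)$. By reflexivity and Rellich--Kondrachov, passing to a further subsequence, $|y_n|^s y_n\rightharpoonup w$ weakly in $W_0^{1,p}(\Omega)$ and strongly in $L^q(\Omega)$ for every $q\in[1,p^\ast[$, for some $w\in W_0^{1,p}(\Omega)$. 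The pointwise a.e. convergence $y_n\to y$ forces $|y_n|^s y_n\to|y|^s y$ a.e., so uniqueness of the a.e. limit gives $w=|y|^s y$, and in particular $|y|^s y\in W_0^{1,p}(\Omega)$.

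Finally, for the strong convergence of $(y_n)_n$ on the supercritical range, I fix any $r\in[1,p^\ast(s+1)[$ and write $r=q(s+1)$ with $q\in[1,p^\ast[$. From the strong $L^q$ convergence $|y_n|^s y_n\to|y|^s y$ I obtain both
\[
\int_\Omega|y_n|^{q(s+1)}\,dx\;=\;\bigl\||y_n|^s y_n\bigr\|_q^q\;\longrightarrow\;\bigl\||y|^s y\bigr\|_q^q\;=\;\int_\Omega|y|^{q(s+1)}\,dx
\]
and a uniform bound of $(y_n)_n$ in $L^r(\Omega)$. Together with $y_n\to y$ a.e., the Brezis--Lieb lemma applied to $f_n=y_n$ in $L^r(\Omega)$ yields $y_n\to y$ strongly in $L^r(\Omega)$, concluding the proof. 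The main obstacle I anticipate is precisely this last step: the mere strong convergence of the composed sequence $|y_n|^s y_n$ in $L^q$ does not by itself translate into strong convergence of $y_n$ on the enlarged exponent range, and the Brezis--Lieb trick (with norm convergence plus a.e. convergence implying strong convergence) is exactly what bridges the gap.
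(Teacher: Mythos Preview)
Your argument is correct and is essentially the natural proof; the paper itself does not give a proof here but simply cites \cite[Lemma 3.8]{CPSSUP}, where the same circle of ideas (boundedness of both $(y_n)_n$ and $(|y_n|^s y_n)_n$ in $W^{1,p}_0$ via \eqref{nablaconto}, Rellich--Kondrachov, and identification of limits) is used.

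One small technicality to tighten: when you write $r=q(s+1)$ with $q\in[1,p^\ast[$, this implicitly forces $r\ge s+1$, which need not hold for every $r\in[1,p^\ast(s+1)[$. The fix is immediate on a bounded domain: establish the strong $L^{r_1}$ convergence for some $r_1\in[\max\{r,s+1\},\,p^\ast(s+1)[$ and then use H\"older to step down to $L^r$. Alternatively, you can bypass Brezis--Lieb entirely by noting that $t\mapsto \mathrm{sgn}(t)|t|^{1/(s+1)}$ is globally H\"older continuous with exponent $\tfrac{1}{s+1}$, so
\[
|y_n-y|^{s+1}\ \le\ 2^{s}\,\bigl|\,|y_n|^s y_n-|y|^s y\,\bigr|,
\]
which turns the strong $L^q$ convergence of $|y_n|^s y_n$ directly into strong $L^{q(s+1)}$ convergence of $y_n$. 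Either route closes the proof.
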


Furthermore, we state the following boundedness result (for the proof, see \cite[Theorem II.5.1]{LU}).

\begin{lemma}    \label{Ladyz}
Let $\Omega$ be an open bounded subset of $\R^N$ and consider 
$y \in W_0^{1, p}(\Omega)$ with $p \le N$. 
Suppose that $\gamma > 0$ and $k_0\in\N$ exist such that 
\[
\int_{\Omega_{k}^{+}} \vert\nabla y\vert^p dx\ \leq\ 
\gamma\left( \int_{\Omega^{+}_{k}} (y - k)^r dx\right)^{\frac{p}{r}} 
+ \gamma \sum_{j=1}^m k^{\alpha_j} [\meas(\Omega^{+}_{k})]^{1-\frac{p}{N}+\varepsilon_j}\quad
\hbox{for all $k\geq k_0$,}
\]
with $\Omega^{+}_{k} =\lbrace x\in\Omega : y(x) > k\rbrace$ and 
$r$, $m$, $\alpha_j$, $\varepsilon_j$ positive constants such that
\[
1\leq r < p^{\ast}, \qquad \varepsilon_j > 0, \qquad p\leq\alpha_j < \varepsilon_j p^{\ast} + p.
\]
Then, $\displaystyle\esssup_{\Omega} y$ is bounded from above by a positive constant 
which can be chosen so that it depends only on 
$\meas(\Omega)$, $N$, $p$, $\gamma$, $k_0$, $r$, $m$, $\varepsilon_j$, 
$\alpha_j$, $\vert y\vert_{p^{\ast}}$ (eventually,
$\vert y\vert_{l}$ for some $l > r$ if $p^{\ast} = +\infty$).
\end{lemma}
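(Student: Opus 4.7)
The plan is to apply a De Giorgi--Stampacchia--Ladyzhenskaya--Ural'tseva iteration on the level sets $\Omega_k^+$. Fix $k_0$ as in the hypothesis and set
\[
\varphi(k) = \meas(\Omega_k^+), \qquad v_k = (y-k)^+ \in W_0^{1,p}(\Omega),
\]
so that $\nabla v_k = \nabla y \cdot \chi_{\Omega_k^+}$. I will treat the case $p<N$ first; the case $p=N$ will be handled at the end by replacing $p^*$ with a sufficiently large but finite exponent $l>r$, for which Sobolev still gives a continuous embedding.

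First I would combine the Sobolev inequality $|v_k|_{p^*}^p \le C_S \int_{\Omega_k^+}|\nabla y|^p\,dx$ with the given estimate. Since $1\le r<p^*$, H\"older's inequality yields
\[
\int_{\Omega_k^+}(y-k)^r dx \ \le\ |v_k|_{p^*}^{r}\ \varphi(k)^{1-r/p^*},
\]
so plugging everything together one gets, for every $k\ge k_0$,
\[
|v_k|_{p^*}^p\ \le\ C_1\,|v_k|_{p^*}^p\,\varphi(k)^{p(\frac{1}{r}-\frac{1}{p^*})}
\ +\ C_1 \sum_{j=1}^m k^{\alpha_j}\,\varphi(k)^{1-\frac{p}{N}+\varepsilon_j}.
\]
Since $\varphi$ is nonincreasing and $y\in L^{p^*}(\Omega)$ (whence $\varphi(k)\to 0$ as $k\to+\infty$), we can choose $k_1\ge k_0$, depending only on $|y|_{p^*}$ and $C_1$, such that $C_1\varphi(k_1)^{p(1/r-1/p^*)}\le \tfrac{1}{2}$. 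After absorbing the first term on the left, this gives
\[
|v_k|_{p^*}^p\ \le\ 2C_1 \sum_{j=1}^m k^{\alpha_j}\,\varphi(k)^{1-\frac{p}{N}+\varepsilon_j} \qquad \hbox{for all } k\ge k_1.
\]

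Next, for $h>k\ge k_1$ I would use $(h-k)^{p^*}\varphi(h)\le \int_{\Omega_h^+}v_k^{p^*}\,dx\le |v_k|_{p^*}^{p^*}$ to derive the recursive bound
\[
\varphi(h)\ \le\ \frac{C_2}{(h-k)^{p^*}}\,\Biggl(\sum_{j=1}^m k^{\alpha_j}\,\varphi(k)^{1-\frac{p}{N}+\varepsilon_j}\Biggr)^{\!p^*/p}.
\]
The crucial arithmetic identity is $p^*/p-p^*/N=1$, so the exponent of $\varphi(k)$ in each summand becomes $1+\varepsilon_j p^*/p > 1$. I would then run the classical Stampacchia iteration with the sequence $k_n = k_1(2-2^{-n})$, so that $k_n\in[k_1,2k_1]$ and $k_{n+1}-k_n = k_1 2^{-(n+1)}$. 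Setting $\varphi_n=\varphi(k_n)$ and using the uniform bound $k_n\le 2k_1$ to absorb the $k^{\alpha_j}$ factors into a constant depending on $k_1$, the recursion takes the form
\[
\varphi_{n+1}\ \le\ C_3\, 2^{n p^*}\ \sum_{j=1}^m \varphi_n^{1+\varepsilon_j p^*/p},
\]
where $C_3$ depends on $\meas(\Omega)$, $N$, $p$, $\gamma$, $k_0$, $r$, $m$, $\alpha_j$, $\varepsilon_j$ and $k_1$ (hence ultimately on $|y|_{p^*}$). A standard lemma on superlinear iterations (see, e.g., Lemma 4.7 in Stampacchia's \emph{\'Equations elliptiques du second ordre}) then guarantees that if $\varphi(k_1)$ is chosen small enough — which, by enlarging $k_1$ further if necessary, can always be arranged in terms of the constants above and $|y|_{p^*}$ — then $\varphi_n\to 0$, so $\varphi(2k_1)=0$ and $\esssup_\Omega y \le 2k_1$, yielding the desired dependence.

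The main technical obstacle is the simultaneous presence of the $k$--factors $k^{\alpha_j}$ and multiple nonlinear terms on the right of the recursion; the condition $p\le\alpha_j<\varepsilon_j p^*+p$ is exactly what is needed so that, once $k_n$ is confined to $[k_1,2k_1]$, the $k^{\alpha_j}$ can be absorbed into $C_3$ and all the exponents $1+\varepsilon_j p^*/p$ on $\varphi_n$ remain strictly greater than $1$, making the iteration converge geometrically to zero. Finally, in the borderline case $p=N$ one replaces $p^*$ throughout by any chosen $l>r$ for which Sobolev embedding $W^{1,p}_0(\Omega)\hookrightarrow L^l(\Omega)$ holds, requiring $l>r$ large enough that the corresponding exponents $1-p/N+\varepsilon_j$ rescaled by $l/p$ still exceed $1$; the rest of the argument is identical, with $|y|_{p^*}$ replaced by $|y|_l$ in the bound.
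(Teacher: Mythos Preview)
The paper does not give its own proof of this lemma; it simply quotes \cite[Theorem~II.5.1]{LU}. Your De~Giorgi--Stampacchia level-set iteration is precisely the approach of that reference, so in outline you are reproducing the cited argument, and steps (1)--(8) of your write-up are correct.

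There is, however, a genuine gap at the step where you ``enlarge $k_1$ further if necessary'' to make $\varphi(k_1)$ small enough for the iteration lemma. You acknowledge that $C_3$ depends on $k_1$ (through the absorbed factors $k_n^{\alpha_j p^*/p}\le(2k_1)^{\alpha_j p^*/p}$), but you do not resolve the resulting circularity: the smallness threshold on $\varphi_0=\varphi(k_1)$ required by Stampacchia's lemma is proportional to a negative power of $C_3$, which itself grows with $k_1$. The way out is quantitative and is exactly where the hypothesis $\alpha_j<\varepsilon_j p^*+p$ is used. Your dyadic choice $k_{n+1}-k_n=k_1\,2^{-(n+1)}$ contributes a factor $k_1^{-p^*}$ to $C_3$, so in fact $C_3\asymp k_1^{(\alpha_j-p)\,p^*/p}$ (up to constants independent of $k_1$); together with the Chebyshev bound $\varphi(k_1)\le|y|_{p^*}^{p^*}\,k_1^{-p^*}$, the quantity controlling convergence satisfies
\[
C_3\,\varphi(k_1)^{\varepsilon_j p^*/p}\ \lesssim\ k_1^{(p^*/p)(\alpha_j-p-\varepsilon_j p^*)},
\]
which tends to $0$ as $k_1\to+\infty$ precisely because $\alpha_j<\varepsilon_j p^*+p$. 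Your explanation of the role of this hypothesis is therefore incorrect: bounding $k_n\in[k_1,2k_1]$ lets you absorb $k^{\alpha_j}$ into $C_3$ regardless of $\alpha_j$, and the exponents $1+\varepsilon_j p^*/p>1$ come from $\varepsilon_j>0$ alone. The strict upper bound on $\alpha_j$ is needed to break the circular dependence between $k_1$ and $C_3$, and without spelling this out the argument is incomplete.
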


As pointed out in Remark \ref{spiega}, the upper bounds in $(g_2)$ 
were not required so far, but will be essential in the incoming results. 
Therefore, some consequences of the estimates in \eqref{crit_exp} 
and \eqref{crit_expi} are needed.

\begin{remark} \label{rmkcrit}
Suppose $1 < p_1 < N$, $1 < p_2 < N$ and take $t_1$, $t_2$
as in \eqref{crit_expi}.
Following the ideas in Remark \ref{sucrit}, we can choose $t_3$ and $t_5$ in 
\eqref{crits12} so that
\begin{equation} \label{crits11}
1 < \frac{p_1 p_2^*(s_2+1)}{p_1 p_2^*(s_2+1) - N t_1} < t_3 < p^*_1(s_1+1),\quad
1 < \frac{p_2 p_1^*(s_1+1)}{p_2 p_1^*(s_1+1) - N t_2} < t_5 < p^*_2(s_2+1),
\end{equation}
as \eqref{crit_expi} implies that
\[
p_1 p_2^*(s_2+1) - N t_1 > 0\qquad \hbox{and}\qquad
1\ <\ \frac{p_1 p_2^*(s_2+1)}{p_1 p_2^*(s_2+1) - N t_1}\ <\ p^*_1(s_1+1),
\]
and also
\[
p_2 p_1^*(s_1+1) - N t_2 > 0\qquad \hbox{and}\qquad
1\ <\ \frac{p_2 p_1^*(s_1+1)}{p_2 p_1^*(s_1+1) - N t_2}\ <\ p^*_2(s_2+1).
\]
Then, $t_4$ and $t_6$ in \eqref{crits120} are such that
\begin{equation} \label{crits21}
t_1 \le t_4 < \ \frac{p_1}{N}\ p_2^*(s_2+1) \le p_2^*(s_2+1)
\quad \hbox{and}\quad
t_2 \le t_6 <\ \frac{p_2}{N} \ p_1^*(s_1+1) \le p_1^*(s_1+1).
\end{equation}
Clearly, \eqref{crits11} and \eqref{crits21} still hold if 
$p_1 =N$ and/or $p_2 =N$.
\end{remark}

\begin{remark}
In the set of hypotheses $(g_0)$--$(g_3)$,
by reasoning as in Remark \ref{sucrit} we can consider estimate \eqref{Gsigmamax}
with $\overline{q}_1$ and $\overline{q}_2$ as in \eqref{Ggrow}
but taking $t_j$, $j \in \{3,4,5,6\}$, as in Remark \ref{rmkcrit}.
Hence, from \eqref{crit_exp}, \eqref{esse1+1}, \eqref{crits11} and \eqref{crits21}
we infer that
\begin{equation}    \label{minmaxPerera1}
1< p_1 <\ \frac{\overline{q}_1}{s_1+1}\ < p_1^{\ast} \quad \mbox{ and } \
1 < p_2 <\ \frac{\overline{q}_2}{s_2+1}\ < p_2^{\ast}.
\end{equation}
\end{remark}

Finally, we are able to prove that the weak Cerami--Palais--Smale condition holds.

\begin{proposition}     \label{PropwCPS}
Under assumptions $(h_0)$--$(h_6)$ and $(g_0)$--$(g_3)$ 
functional $\J: X \to \R$, defined as in \eqref{functional}, 
satisfies $(wCPS)$ condition in $\R$.
\end{proposition}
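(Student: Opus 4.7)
The plan is to take an arbitrary $(CPS)_\beta$-sequence $((u_n,v_n))_n\subset X$ and, in four stages, extract a subsequence converging in $W$ to some $(u,v)\in X$ with $\J(u,v)=\beta$ and $d\J(u,v)=0$. The stages are: (I) boundedness in $W$; (II) a uniform $L^\infty$-bound via Lemma \ref{Ladyz}; (III) strong $W$-convergence through the monotonicity of $a$ and $b$; (IV) passage to the limit by Proposition \ref{smooth1}.

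For stage (I), I would consider
\[
\J(u_n,v_n) - \theta_1\frac{\partial\J}{\partial u}(u_n,v_n)[u_n] - \theta_2\frac{\partial\J}{\partial v}(u_n,v_n)[v_n] = \beta + o(1),
\]
expand it via \eqref{dJw} and \eqref{dJz}, and split each integral into the regions $\{|(t,\xi)|\ge R\}$ and its complement. On the large set, the Ambrosetti--Rabinowitz-type inequality $(h_5)$ produces a lower bound of the form $\mu_2\,a(x,u_n,\nabla u_n)\cdot\nabla u_n + \mu_2\,b(x,v_n,\nabla v_n)\cdot\nabla v_n$, while $(g_3)$ absorbs the $G$-contribution up to a bounded remainder coming from the small set; combining this with the coercivity $(h_3)$ yields
\[
\int_\Omega (1+|u_n|^{s_1p_1})|\nabla u_n|^{p_1}\,dx + \int_\Omega (1+|v_n|^{s_2p_2})|\nabla v_n|^{p_2}\,dx \le C.
\]
Lemma \ref{lemma_s} then supplies $(u,v)\in W_1\times W_2$ with $|u|^{s_1}u\in W_1$, $|v|^{s_2}v\in W_2$ such that, up to subsequences, $u_n\wk u$ in $W_1$, $v_n\wk v$ in $W_2$, $(u_n,v_n)\to(u,v)$ a.e.\ in $\Omega$, and $u_n\to u$, $v_n\to v$ strongly in every $L^{r_1}(\Omega)$, $L^{r_2}(\Omega)$ with $r_i<p_i^*(s_i+1)$.

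Stage (II) is the main obstacle and is precisely the reason Lemma \ref{Ladyz} was introduced. Testing $\frac{\partial\J}{\partial u}(u_n,v_n)$ against the truncation $(u_n-k)^+$ (the $(CPS)_\beta$ control makes the resulting error infinitesimal), one derives, after applying $(h_3)$ on the left and the polynomial bound \eqref{Gsigmamax} together with the Young splitting of Remark \ref{rmkcrit} on the right, an inequality of the form
\[
\int_{\Omega_k^+}|\nabla u_n|^{p_1}\,dx \le \gamma\Bigl(\int_{\Omega_k^+}(u_n-k)^r\,dx\Bigr)^{p_1/r} + \gamma\sum_{j=1}^{m}k^{\alpha_j}[\meas(\Omega_k^+)]^{1-p_1/N+\varepsilon_j},
\]
with exponents $r,\alpha_j,\varepsilon_j$ in the admissible range of Lemma \ref{Ladyz}; the crucial point is that the sharp upper bounds \eqref{crit_exp}--\eqref{crit_expi} and the consequence \eqref{minmaxPerera1} are tuned exactly so that these admissibility constraints can be met. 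The $L^{p_1^*(s_1+1)}$-bound on $u_n$ obtained in stage (I) supplies the terminal norm required by Lemma \ref{Ladyz}, which then delivers a uniform bound on $\esssup_\Omega u_n$; running the same argument on $-u_n$ and symmetrically on $\pm v_n$ gives $M>0$ with $|u_n|_\infty+|v_n|_\infty\le M$ for all $n$. In particular, by Fatou, $(u,v)\in W\cap L=X$.

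With the $L^\infty$-bound in hand, $((u_n-u,v_n-v))_n$ is bounded in $X$, so $d\J(u_n,v_n)[(u_n-u,v_n-v)]\to 0$. Expanding this identity via \eqref{diff} and discarding the lower-order terms by the weak convergence from stage (I) together with the strong $L^{r_i}$-convergence for the exponents $\overline{q}_1,\overline{q}_2$ in \eqref{minmaxPerera1} (this controls the $G_u$ and $G_v$ contributions as well as the $A_u,B_v$ ones via $(h_1)$), one is left with
\[
\int_\Omega\bigl(a(x,u_n,\nabla u_n)-a(x,u_n,\nabla u)\bigr)\cdot(\nabla u_n-\nabla u)\,dx \to 0
\]
and the analogue for $b$; the strict monotonicity $(h_6)$ then forces $\nabla u_n\to\nabla u$ and $\nabla v_n\to\nabla v$ a.e.\ in $\Omega$, so dominated convergence yields $(u_n,v_n)\to(u,v)$ in $W$. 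Proposition \ref{smooth1}, applicable because the sequence is uniformly bounded in $L^\infty$, then gives $\J(u_n,v_n)\to\J(u,v)=\beta$ and $d\J(u_n,v_n)\to d\J(u,v)=0$ in $X'$, establishing the $(wCPS)_\beta$ condition for every $\beta\in\R$.
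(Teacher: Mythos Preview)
Your outline diverges from the paper at Stage~(II), and the divergence is a genuine gap rather than a stylistic choice.

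First, the inequality you write for Lemma~\ref{Ladyz} is for $u_n$ itself, with $|\nabla u_n|^{p_1}$ on the left and exponents $1-p_1/N+\varepsilon_j$ on the right. But the whole point of the paper is that $q_1$ may exceed $p_1^\ast$; the term $\int_{\Omega_k^+}|u_n|^{q_1}$ coming from \eqref{gucr} then cannot be put in the form $\gamma\bigl(\int(u_n-k)^r\bigr)^{p_1/r}$ with $r<p_1^\ast$, which is exactly what Lemma~\ref{Ladyz} demands. The paper circumvents this by applying the lemma not to $u$ but to $w=|u|^{s_1}u$: then $|u|^{q_1}=|w|^{q_1/(s_1+1)}$ and \eqref{minmaxPerera1} gives $q_1/(s_1+1)<p_1^\ast$, while the weighted coercivity in $(h_3)$ supplies $|\nabla w|^{p_1}$ on the left via \eqref{nablaconto}. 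Your Stage~(II), as written, misses this substitution and therefore does not produce an admissible inequality in the supercritical regime.

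Second, and more structurally, you claim a \emph{uniform} bound $|u_n|_\infty+|v_n|_\infty\le M$. The paper does not prove this, and the very reason the $(wCPS)$ notion is introduced (see the remark preceding Definition~\ref{wCPS}) is that a $(CPS)_\beta$--sequence may well be unbounded in $\|\cdot\|_X$. Concretely, testing with $(u_n-k)^+$ leaves an error $\varepsilon_n$ bounded by $\|d\J(u_n,v_n)\|_{X'}(1+\|(u_n,v_n)\|_X)\to 0$, but for each fixed $n$ this $\varepsilon_n>0$ is independent of $k$ and cannot be written as $\gamma k^{\alpha}[\meas(\Omega_k^+)]^{1-p_1/N+\varepsilon}$ uniformly over $k$, so Lemma~\ref{Ladyz} does not apply to $w_n$ with constants independent of $n$. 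The paper instead argues by contradiction on the \emph{limit}: assuming $\esssup_\Omega u=+\infty$, it fixes $k$, uses $\meas(\Omega_{u,k}^+)>0$ to absorb $\varepsilon_n$ for $n$ large, passes to the limit in the $G_u$--term via dominated convergence (this is \eqref{limGuRk}), and only then applies Lemma~\ref{Ladyz} to $w=|u|^{s_1}u$, obtaining $(u,v)\in L^\infty\times L^\infty$ but no uniform bound on $(u_n,v_n)$.

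Because of this, your Stages~(III)--(IV) cannot proceed as written: without a uniform $L^\infty$ bound, $(u_n-u,v_n-v)$ need not be bounded in $X$, so $d\J(u_n,v_n)[(u_n-u,v_n-v)]\to 0$ is not guaranteed, and Proposition~\ref{smooth1} is not directly applicable to the original sequence. The paper's route is to introduce the truncation $\mathds{T}_k(u_n,v_n)$ with $k>\|(u,v)\|_L$, show that this truncated sequence is still $(CPS)_\beta$ (Step~3), prove $\mathds{T}_k(u_n,v_n)\to(u,v)$ in $W$ and deduce $(u_n,v_n)\to(u,v)$ in $W$ (Step~4), and only then invoke Proposition~\ref{smooth1} on the bounded truncated sequence to get $\J(u,v)=\beta$, $d\J(u,v)=0$.
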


\begin{proof}
Taking any $\beta\in\R$, let $((u_n, v_n))_n\subset X$ be a sequence such that
\begin{equation}  \label{3.15}
\J(u_n, v_n) \to \beta \quad \mbox{ and }\quad 
\Vert d\J(u_n, v_n)\Vert_{X^{\prime}} (1 + \Vert (u_n, v_n)\Vert_{X}) \to 0 
\quad \mbox{ as } n\to +\infty.
\end{equation}
We want to prove that a couple $(u, v)\in X$ exists such that 
\begin{enumerate}
\item[$(i)$] $(u_n, v_n) \to (u, v)$ in $W$ (up to subsequences),
\item[$(ii)$] $\J(u, v) = \beta$, $\  d\J(u, v) = 0$.
\end{enumerate}
To this aim, for simplicity, we organize our proof in the following steps:
\begin{enumerate}
\item both the sequences
\begin{equation}   \label{bounded}
\left(\int_{\Omega}(1 +\vert u_n\vert^{s_1p_1}) \vert\nabla u_n\vert^{p_1}dx\right)_n\;
\hbox{and}\;
\left(\int_{\Omega} (1 +\vert v_n\vert^{s_2p_2}) \vert v_n\vert^{p_2} dx\right)_n 
\quad \mbox{are bounded,}
\end{equation}
so, by applying Lemma \ref{lemma_s} a couple $(u, v)\in W$ exists such that 
also $(\vert u\vert^{s_1} u, \vert v\vert^{s_2} v)\in W$ and, up to subsequences,
we have: 
\begin{align}  
\label{4.7}
&(u_n, v_n)\ \rightharpoonup\ (u, v)  \quad\mbox{ weakly in } W,\\  
\label{4.8} 
&(\vert u_n\vert^{s_1} u_n, \vert v_n\vert^{s_2} v_n)\ \rightharpoonup\
(\vert u\vert^{s_1} u, \vert v\vert^{s_2} v)\quad \mbox{ weakly in } W,\\  
\label{4.10}
&(u_n, v_n)\ \rightarrow\ (u, v) \; \mbox{ in } L^{r_1}(\Omega)\times L^{r_2}(\Omega) 
\ \mbox{ if } 1 \le r_i < p_i^{\ast}(s_i +1),\, i \in \{1,2\},\\
 \label{4.9}
&(u_n, v_n)\ \rightarrow\ (u, v) \quad \mbox{ a.e. in } \Omega;
\end{align}
\item $(u, v)\in L^{\infty}(\Omega)\times L^{\infty}(\Omega)$;
\item for any $k>0$, define $T_k:\R\longrightarrow\R$ such that
\[ 
T_k t:= \begin{cases}
t &\hbox{ if } \vert t\vert\leq k\\
k \frac{t}{\vert t\vert} &\hbox{ if } \vert t\vert > k
\end{cases}
\]
and 
\[
\mathds{T}_k:(y_1,y_2)\in\R^2\mapsto\mathds{T}_k(y_1,y_2) = (T_ky_1, T_ky_2)\in\R^2,
\]
then, if $k\geq \max\lbrace\| (u, v)\|_{L}, R\rbrace + 1$ (with $R\geq 1$ as in our set of hypotheses), it is
\[     
\Vert d\J (\mathds{T}_k (u_n, v_n))\Vert_{X^{\prime}}\ \to\ 0
\quad \hbox{and}\quad
\J (\mathds{T}_k(u_n, v_n))\ \to\ \beta;
\]
\item $\|\mathds{T}_k (u_n, v_n) -(u, v) \|_{W}\ \to\ 0$ and then $(i)$ holds;
\item $(ii)$ is satisfied.
\end{enumerate}
For simplicity, here and in the following we will use 
the notation $(\varepsilon_n)_n$ for any infinitesimal sequence depending only on 
$((u_n, v_n))_n$.
Moreover, we denote by $c_i$ every positive constant which arises during our computations.\\
{\sl Step 1.} Firstly, we note that \eqref{dJzstar} and \eqref{3.15} imply
\begin{equation}   \label{epsin}
\frac{\partial\J}{\partial u}(u_n, v_n)[u_n] =\varepsilon_n \quad\mbox{ and }
\quad \frac{\partial\J}{\partial v}(u_n, v_n)[v_n] =\varepsilon_n.
\end{equation}
Thus, if we take $\theta_1$, $\theta_2$ as in $(h_5)$, $(g_3)$, 
and $s_1$, $s_2$ as in $(h_3)$, by reasoning as in 
\cite[{\sl Step 1} in Proposition 4.8]{CS}, 
from \eqref{functional}, \eqref{dJw}, \eqref{dJz}, \eqref{3.15}, \eqref{epsin}, 
assumptions $(h_1)$, $(h_3)$, $(h_5)$, $(g_3)$ together with estimate \eqref{Gsigmamax},
and direct computations it follows that
\[
\begin{split}
\beta +\varepsilon_n 
&= \J(u_n, v_n)-\theta_1\frac{\partial\J}{\partial u}(u_n, v_n)[u_n] -\theta_2\frac{\partial\J}{\partial v}(u_n, v_n)[v_n]\\
&\geq \mu_0 \mu_2\int_{\Omega}(1 +\vert u_n\vert^{s_1p_1})\vert\nabla u_n\vert^{p_1} dx 
+ \mu_0 \mu_2\int_{\Omega}(1 +\vert v_n\vert^{s_2p_2})\vert\nabla v_n\vert^{p_2} dx -c_1,
\end{split}
\]
which implies that \eqref{bounded} is satisfied and, up to subsequences, $(u,v) \in W$ 
exists such that \eqref{4.7}--\eqref{4.9} hold.\\
{\sl Step 2.} Due to the Sobolev Embedding Theorem, this step requires 
a proof only if either $p_1 \le N$ or $p_2 \le N$.
So, if $p_1 < N$ (when $p_1 = N$ the arguments can be simplified)
we want to prove that $u \in L^{\infty}(\Omega)$.
Arguing by contradiction, we assume that $u\notin L^{\infty}(\Omega)$
as either
\begin{equation}    \label{sup_u}
\esssup_{\Omega} u = +\infty
\end{equation}
or
\begin{equation}     \label{sup_menou}
\esssup_{\Omega}(-u) = +\infty. 
\end{equation}
If \eqref{sup_u} holds, then for any $k\in\N$ we have that
\begin{equation}         \label{mpos}
\meas(\Omega_{u,k}^{+}) > 0\qquad
\hbox{with}\quad 
\Omega_{u,k}^{+} = \{x\in\Omega :\ u(x) > k\},
\end{equation}
and for an integer $\tilde{k}>0$ we consider the function 
$R^{+}_{\tilde{k}}: t\in\R \mapsto R^{+}_{\tilde{k}} t\in\R$ defined as
\begin{equation}         \label{resto+}
R^{+}_{\tilde{k}} t = \begin{cases}
0 &\hbox{ if } t\leq \tilde{k}\\
t - \tilde{k} &\hbox{ if } t > \tilde{k}
\end{cases}.
\end{equation}
Now, we consider condition \eqref{mpos} for a
fixed integer $k > R$ (with $R\geq 1$ as in our setting of hypotheses) 
and, taking $\tilde{k} = k^{s_1 +1}$, for simplicity we put
\begin{equation}        \label{short}
w_n = |u_n|^{s_1}u_n, \qquad w = |u|^{s_1}u,
\end{equation}
and, as $|t|^{s_1} t >\tilde{k}\iff t>k$, 
we have that 
\begin{equation}        \label{shortu}
 \Omega_{u,k}^{+} = \{x\in\Omega :\ w(x) > \tilde{k}\}.
\end{equation}
Thus, from condition \eqref{4.8} and the sequentially weakly lower semicontinuity of $\|\cdot\|_{W_1}$
we have that
\[
\|R^{+}_{\tilde{k}}w\|_{W_1}\ \le\ \liminf_{n\to+\infty} \|R^{+}_{\tilde{k}}w_n\|_{W_1}, 
\]
i.e.,
\begin{equation}        \label{3.23}
\int_{\Omega_{u,k}^{+}} \vert\nabla w\vert^{p_1} dx
 \leq \liminf_{n\to +\infty}\int_{\Omega_{n,k}^{+}} \vert\nabla w_n\vert^{p_1} dx,
\end{equation}
with 
\[
\Omega_{n,k}^{+} = \lbrace x\in\Omega:\ u_n(x) > k\rbrace
= \{x\in\Omega :\ w_n(x) > \tilde{k}\}.
\]
On the other hand, by definition \eqref{resto+} with $\tilde{k}$ replaced with $k$,
it is $\Vert R^{+}_k u_n\Vert_{X_1} \leq \Vert u_n\Vert_{X_1}$,
 so from \eqref{dJzstar}, \eqref{3.15} and \eqref{mpos}, an integer $n_k\in\N$ exists such that
\begin{equation}      \label{m+mpos}
\frac{\partial\J}{\partial u}(u_n, v_n)\left[ R^{+}_k u_n\right]
 < \meas(\Omega_{u,k}^{+}) \quad \mbox{ for all } n\geq n_k.
\end{equation}
Then, by reasoning as in \cite[{\sl Step 2} in Proposition 4.8]{CS}, 
from \eqref{dJw}, hypotheses $(h_3)$, 
$(h_4)$ with $\mu_1 \leq 1$ (see Remark \ref{remmu1}), 
equality \eqref{nablaconto} and estimate \eqref{m+mpos} we obtain that
\begin{equation}    \label{mu1lambda}
\int_{\Omega^{+}_{n,k}} \vert\nabla w_n\vert^{p_1} dx 
\leq\frac{(s_1+1)^{p_1}}{\mu_0\mu_1} \meas(\Omega_{u,k}^{+}) + \int_{\Omega} G_u(x,u_n,v_n) R^{+}_k u_n dx
\quad \mbox{ for all } n\geq n_k.
\end{equation}
We claim that
\begin{equation}     \label{limGuRk}
\int_{\Omega} G_u(x,u_n,v_n) R^{+}_k u_n dx\ \to\ \int_{\Omega} G_u(x,u,v) R^{+}_k u\ dx.
\end{equation}
In fact, from \eqref{4.9} and $(g_0)$ we have that
\[
G_u(x, u_n, v_n)R^+_k u_n\to G_u(x, u, v)R^+_k u \quad\mbox{ a.e. in } \Omega,
\]
while, thanks to assumption $(g_2)$, formulae \eqref{gucr}, \eqref{crits12}, \eqref{crits11}, \eqref{crits21} and
\eqref{4.10} ensure the existence of $h\in L^1(\Omega)$ such that
\[
|G_u(x, u_n, v_n)R^+_k u_n|\leq\sigma(|u_n| + |u_n|^{q_1} +|u_n|^{t_3} +|v_n|^{t_4})\leq h(x) 
\quad\mbox{ for a.e. $x \in \Omega$,}
\]
so the Dominated Convergence Theorem implies \eqref{limGuRk}.\\
Thus, summing up, via \eqref{3.23}, \eqref{mu1lambda}, \eqref{limGuRk} and again \eqref{gucr}, 
from definition \eqref{resto+} ($\tilde{k}$ replaced with $k$) we infer that
\[
\begin{split}
\int_{\Omega_{u,k}^{+}} \vert\nabla w\vert^{p_1} dx&
\le c_2\left(\int_{\Omega} \vert R^+_k u\vert dx + 
\int_{\Omega}\vert u\vert^{q_1-1}|R^+_k u| dx 
+ \int_{\Omega}\vert v\vert^{t_1}\vert R^+_k u\vert dx +\meas(\Omega_{u,k}^{+})\right)\\
&\leq 
c_2\left(\int_{\Omega_{u,k}^{+}} \vert u\vert dx + 
\int_{\Omega_{u,k}^{+}}\vert u\vert^{q_1} dx 
+ \int_{\Omega_{u,k}^{+}}\vert u\vert \vert v\vert^{t_1} dx +\meas(\Omega_{u,k}^{+})\right),
\end{split}
\]
or better, from \eqref{crits12} but according to the choises in Remark \ref{rmkcrit},
by taking $\overline{q}_1 > 1$ as in \eqref{Ggrow} and being $u >1$ in $\Omega_{u,k}^{+}$, 
definition \eqref{short} implies that
\begin{equation}    \label{gradu31}
\int_{\Omega_{u,k}^{+}} \vert\nabla w\vert^{p_1} dx\leq 
c_3\left(\int_{\Omega_{u,k}^{+}}\vert w\vert^{\frac{\overline{q}_1}{s_1+1}} dx 
+ \int_{\Omega_{u,k}^{+}} \vert v\vert^{t_4} dx + \meas(\Omega_{u,k}^{+})\right).
\end{equation}
We claim that
\begin{equation}    \label{gradu311}
\int_{\Omega_{u,k}^{+}} \vert v\vert^{t_4} dx 
\le (\tau_{2,p^*_2} \| |v|^{s_2}v \|_{W_2})^{\frac{t_4}{s_2+1}}
[\meas(\Omega_{u,k}^{+})]^{1-\frac{t_4}{p_2^*(s_2+1)}}. 
\end{equation}
In fact, if $t_4= 0$ then \eqref{gradu311} reduces to
\[
\int_{\Omega_{u,k}^{+}} \vert v\vert^{t_4} dx = \meas(\Omega_{u,k}^{+}),
\]
while if $t_4 > 0$ from {\sl Step 1} we have that $z = |v|^{s_2}v \in W_2$, so 
\eqref{crits21} gives $\frac{p^*_2(s_2+1)}{t_4} >1$ and the
H\"older inequality with such an exponent, together with \eqref{Sobpi},
implies that
\[
\begin{split}
\int_{\Omega_{u,k}^{+}} \vert v\vert^{t_4} dx &= \int_{\Omega_{u,k}^{+}} \vert z\vert^{\frac{t_4}{s_2+1}} dx 
\le \vert z\vert_{p^*_2}^{\frac{t_4}{s_2+1}} [\meas(\Omega_{u,k}^{+})]^{1-\frac{t_4}{p_2^*(s_2+1)}} \\
& \le (\tau_{2,p^*_2} \| z \|_{W_2})^{\frac{t_4}{s_2+1}}
[\meas(\Omega_{u,k}^{+})]^{1-\frac{t_4}{p_2^*(s_2+1)}}. 
\end{split}
\]
On the other hand, if, for simplicity we put $r = \frac{\overline{q}_1}{s_1+1}$,
from \eqref{minmaxPerera1}, direct computations and, again, \eqref{Sobpi} we have that
\[
\begin{split}
\int_{\Omega_{u,k}^{+}} \vert w\vert^{\frac{\overline{q}_1}{s_1+1}} dx 
&\leq\ 2^{r -1}
\left(\int_{\Omega_{u,k}^{+}} \vert w - \tilde{k}\vert^{r} dx 
+ \tilde{k}^{r} \meas(\Omega_{u,k}^{+})\right)\\
&\leq\ 2^{r -1} \left((\tau_{1,r} \| w \|_{W_1})^{r-p_1}
\left(\int_{\Omega_{u,k}^{+}} \vert w - \tilde{k}\vert^{r} dx\right)^{\frac{p_1}{r}} 
+ \tilde{k}^{r} \meas(\Omega_{u,k}^{+})\right),
\end{split}
\]
which, together with \eqref{gradu311}, allows us to reduce \eqref{gradu31} to
the estimate
\begin{equation}    \label{gradu3}
\int_{\Omega_{u,k}^{+}} \vert\nabla w\vert^{p_1} dx\leq 
c_4\left(\left(\int_{\Omega_{u,k}^{+}} |w - \tilde{k}|^{r} dx\right)^{\frac{p_1}{r}} 
+ \tilde{k}^r \meas(\Omega_{u,k}^{+}) + [\meas(\Omega_{u,k}^{+})]^{1-\frac{t_4}{p_2^*(s_2+1)}}\right)
\end{equation}
with $c_4 = c_4(\|w\|_{W_1}, \|z\|_{W_2}) > 0$.\\
At last, as $p_1 < N$, from \eqref{crits21} we have that
\[
\begin{split}
&\meas(\Omega_{u,k}^{+}) = \meas(\Omega_{u,k}^{+})^{1-\frac{p_1}{N} + \eps_1}, 
\quad \hbox{with }\ \eps_1 = \frac{p_1}{N} > 0,\; \eps_1 p^*_1 + p_1 = p^*_1,\\
&\meas(\Omega_{u,k}^{+})^{1-\frac{t_4}{p_2^*(s_2+1)}} = 
\meas(\Omega_{u,k}^{+})^{1-\frac{p_1}{N} + \eps_2}, \quad \eps_2 = \frac{p_1}{N} - \frac{t_4}{p_2^*(s_2+1)} > 0,
\end{split}
\]
so, from \eqref{minmaxPerera1} and \eqref{shortu},
since \eqref{gradu3} holds for all $\tilde{k}$ large enough,
we have that Lemma \ref{Ladyz} applies and $\displaystyle \esssup_{\Omega} w < +\infty$
in contradiction to \eqref{sup_u}. \\
Similar arguments, but modified in a suitable way, ensures 
that even \eqref{sup_menou} cannot occur, then it has to be $u \in L^\infty(\Omega)$, 
and also that it has to be $v \in L^\infty(\Omega)$.\\
{\sl Step 3} The proof can be obtained by reasoning as in the proof of \cite[{\sl Step 3} in Proposition 4.8]{CS}
but with $m=2$ and by replacing the estimates in \cite[Remark 4.5]{CS} with those ones in Remark \ref{rmkcrit} 
together with \eqref{minmaxPerera1}, and also by using \eqref{4.10}
at the place of \cite[(4.19)]{CS}.\\
{\sl Steps 4 and 5.} The proofs are as in the corresponding steps 
of \cite[Proposition 4.8]{CS} (see also \cite[Proposition 4.6]{CP2}).
\end{proof}


\section{Existence and multiplicity results}    
\label{sec_main}

Now, we can state our leading results. To this aim, 
we refer to the decomposition of $X$ already introduced in \cite[Section 5]{CSS}. 
For the sake of convenience, 
here we recall the main issues. For $i\in\lbrace 1,2\rbrace$, the first eigenvalue of
$- \Delta_{p_i}$ in $W_i$ is given by
\begin{equation}\label{autoval_i}
\lambda_{i, 1}:=\inf_{y\in W_i\setminus\lbrace 0\rbrace}
\frac{\int_{\Omega}\vert\nabla y\vert^{p_i}dx}{\int_{\Omega}\vert y\vert^{p_i} dx}.
\end{equation}
Such an eigenvalue is simple, positive, isolated and has a unique eigenfunction $\varphi_{i,1}$
such that
\begin{equation}\label{eig}
\varphi_{i,1} > 0 \;\hbox{a.e. in $\Omega$,}\quad \varphi_{i,1} \in L^\infty(\Omega)
\quad\hbox{and}\quad |\varphi_{i,1}|_{p_i}=1
\end{equation}
(see, e.g., \cite{Lin}). Furthermore, a sequence of positive real numbers exists such that
\begin{equation}    \label{lambda_m}
0 <\lambda_{i, 1} <\lambda_{i, 2}\leq\dots\leq\lambda_{i, m}\leq\dots, 
\quad \mbox{ with } \lambda_{i, m}\nearrow +\infty \ \mbox{ as } m\to +\infty,
\end{equation}
with corresponding pseudo--eigenfunctions $(\psi_{i, m})_m$
which not only generate the whole space $W_i$, but are in $L^{\infty}(\Omega)$, too. 
Thus, $(\psi_{i, m})_m\subset X_i$, and, for any fixed $m\in\N$,
we consider
\[
V_{i, m} = {\rm span}\lbrace\psi_{i, 1},\dots,\psi_{i, m}\rbrace 
\]
and denote $Y_{i, m}$ its topological complement in $W_i$ so that
$W_i = V_{i,m}\oplus Y_{i,m}$ and the inequality
\begin{equation}\label{lambdan+1}
\lambda_{i,m+1}\ \int_\Omega|y|^{p_i} dx\ 
\leq\ \int_\Omega |\nabla y|^{p_i} dx \quad
\hbox{ for all }  y\in Y_{i,m}
\end{equation}
is satisfied (cf. \cite[Proposition 5.4]{CP2}).\\
Thus, for any $m\in\N$ definition \eqref{Wdefn1} implies that
\[
W = (V_{1, m}\times V_{2, m})\oplus(Y_{1, m}\times Y_{2, m})
\]
while from \eqref{Xdefn} it follows that
\[
X= (V_{1, m}\times V_{2, m})\oplus(Y_m^{X_1}\times Y_m^{X_2})
\]
where, for $i\in\lbrace 1, 2\rbrace$, it is 
$Y_{m}^{X_i} = Y_{i, m}\cap L^{\infty}(\Omega)\subset X_i$ and $X_i= V_{i, m}\oplus Y_m^{X_i}$, with
\[
\mbox{dim}(V_{i, m}) = m \quad \mbox{ and }\quad \mbox{codim}(Y_{m}^{X_i}) = m.
\]

Now, we are ready to provide our existence and multiplicity results.

\begin{theorem}    \label{ThExist}
Suppose that $A(x, t, \xi)$, $B(x, t, \xi)$ comply with assumptions 
$(h_0)$--$(h_6)$ and that a given function $G(x, u, v)$ satisfies hypotheses $(g_0)$--$(g_3)$. 
Furthermore, assume that a constant $\alpha_2>0$ exists such that
the following conditions hold:
\begin{enumerate}
\item[$(h_7)$] taking $p_1$, $p_2$ as in hypothesis $(h_1)$ and 
$s_1$, $s_2\geq 0$ as in assumption $(h_3)$, we have that
\[
\begin{split}
&A(x, t, \xi)\geq\alpha_2(1+|t|^{s_1p_1})\vert\xi\vert^{p_1}\quad\mbox{ a.e. in } \Omega, 
\mbox{ for all } (t, \xi)\in\R\times\R^N,\\
&B(x, t, \xi)\geq\alpha_2(1+|t|^{s_2p_2})\vert\xi\vert^{p_2}
\quad\mbox{ a.e. in } \Omega, \mbox{ for all } (t, \xi)\in\R\times\R^N;
\end{split}
\]
\item [$(g_4)$] taking $\lambda_{1,1}$ and $\lambda_{2,1}$ as in \eqref{autoval_i}, we have that
\[
\limsup_{(u,v)\to (0,0)} \frac{G(x, u, v)}{\vert u\vert^{p_1}+|v|^{p_2}}\ <\ 
\alpha_2\min\{\lambda_{1,1}, \lambda_{2,1}\}\quad
\hbox{uniformly a.e. in $\Omega$.}
\]
\end{enumerate}
Thus, functional $\J$ in \eqref{functional} possesses at least 
one nontrivial critical point in $X$; 
hence, problem \eqref{system} admits a nontrivial weak bounded solution.
\end{theorem}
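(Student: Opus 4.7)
The plan is to apply Theorem~\ref{mountainpass} to $\J$, after subtracting the finite constant $\J(0,0)$ if necessary so that $\J(0,0)=0$; the $\mathcal{C}^1$ regularity on $X$ comes from Proposition~\ref{smooth1} and the $(wCPS)$ condition in $\R_+$ from Proposition~\ref{PropwCPS}. As the continuous penalty I would take
\[
\ell(u,v)\ :=\ \|u\|_{W_1}+\|v\|_{W_2}+\frac{1}{s_1+1}\,\bigl\||u|^{s_1}u\bigr\|_{W_1}+\frac{1}{s_2+1}\,\bigl\||v|^{s_2}v\bigr\|_{W_2},
\]
which satisfies $\ell(0,0)=0$ and $\ell(u,v)\geq\|(u,v)\|_W$, and is continuous on $X$ by \eqref{nablaconto} together with the $L^\infty$--control built into $X$. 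The ``twisted'' summands are the crucial feature: the supercritical Lebesgue terms arising in $\int_\Omega G\,dx$ cannot be dominated by $\|u\|_{W_1},\|v\|_{W_2}$ alone, but can be absorbed through the subcritical Sobolev embedding applied to $z:=|u|^{s_1}u$ and $w:=|v|^{s_2}v$ in the spirit of Lemma~\ref{lemma_s}.

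For hypothesis $(ii)$ I would combine $(h_7)$, which gives $\int_\Omega A\,dx\geq\alpha_2\|u\|_{W_1}^{p_1}+\frac{\alpha_2}{(s_1+1)^{p_1}}\|z\|_{W_1}^{p_1}$ and similarly for $\int_\Omega B\,dx$, with the pointwise upper bound
\[
G(x,u,v)\ \leq\ (\alpha_2\lambda-\eps)(|u|^{p_1}+|v|^{p_2})\ +\ K(|u|^{\overline{q}_1}+|v|^{\overline{q}_2}),
\]
obtained by patching $(g_4)$ near $(0,0)$ with \eqref{Gsigmamax} elsewhere, where $\lambda:=\min\{\lambda_{1,1},\lambda_{2,1}\}$ and $\eps>0$ is small. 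The Poincar\'e-type inequality $\lambda_{i,1}|u|_{p_i}^{p_i}\leq\|u\|_{W_i}^{p_i}$ from \eqref{autoval_i} absorbs the first two negative terms into $\int A+\int B$ with strictly positive margin proportional to $\eps$. Since $\overline{q}_i/(s_i+1)<p_i^*$ by Remark~\ref{rmkcrit} and \eqref{minmaxPerera1}, the subcritical Sobolev embedding yields $|u|_{\overline{q}_1}^{\overline{q}_1}=|z|_{\overline{q}_1/(s_1+1)}^{\overline{q}_1/(s_1+1)}\leq C\|z\|_{W_1}^{\overline{q}_1/(s_1+1)}$ (and analogously for $v$), so that
\[
\J(u,v)\ \geq\ c_0\bigl(\|u\|_{W_1}^{p_1}+\|z\|_{W_1}^{p_1}+\|v\|_{W_2}^{p_2}+\|w\|_{W_2}^{p_2}\bigr)-c_1\bigl(\|z\|_{W_1}^{\overline{q}_1/(s_1+1)}+\|w\|_{W_2}^{\overline{q}_2/(s_2+1)}\bigr).
\]
Since $\overline{q}_i/(s_i+1)>p_i$ by \eqref{minmaxPerera1}, a case analysis on which of the four norm-summands of $\ell(u,v)=r_0$ is the largest (hence $\geq r_0/4$) will give $\J(u,v)\geq\varrho_0>0$ for $r_0$ small.

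For hypothesis $(iii)$ I would take $e_t:=(t\varphi_{1,1},0)\in X$ with $\varphi_{1,1}$ as in \eqref{eig}. Then $(h_1)$ and the refined growth collected in Remark~\ref{remmu1} yield $\int_\Omega A(x,t\varphi_{1,1},t\nabla\varphi_{1,1})\,dx\leq C(1+t^{p_1(s_1+1)})$, whereas Remark~\ref{rmkPerera0} together with $\varphi_{1,1}>0$ gives $\int_\Omega G(x,t\varphi_{1,1},0)\,dx\geq c\,t^{1/\theta_1}-C$ for large $t$. The exponent inequality $p_1(s_1+1)<1/\theta_1$, obtained by combining $(h_5)$, \eqref{stimamin}, and the flexibility to enlarge $q_1$ as in \eqref{esse1+1} together with the bound $1/\theta_1\leq q_1$ forced by Remark~\ref{rmkPerera0}, then drives $\J(e_t)\to-\infty$ as $t\to+\infty$, so $t$ can be fixed so that $\|e_t\|_W>r_0$ and $\J(e_t)<\varrho_0$. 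Theorem~\ref{mountainpass} therefore produces a critical point $(u^*,v^*)\in X$ with $\J(u^*,v^*)\geq\varrho_0>0=\J(0,0)$, hence nontrivial, which by \eqref{diff} is a weak bounded solution of \eqref{system}. The main technical obstacle is the treatment of the supercritical $G$--terms in step $(ii)$, precisely what motivates including the twisted norms $\||u|^{s_i}u\|_{W_i}$ inside $\ell$; the delicate exponent check $p_1(s_1+1)<1/\theta_1$ underlying $(iii)$ is the secondary difficulty.
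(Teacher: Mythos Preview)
Your overall plan and the verification of the mountain pass geometry in step $(ii)$ are essentially the same as the paper's (the paper uses $\ell(u,v)=\max\{\|(u,v)\|_W,\|(|u|^{s_1}u,|v|^{s_2}v)\|_W\}$ rather than your sum, but the two are equivalent up to constants, and the estimate leading to \eqref{2i} proceeds exactly along your lines).

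The gap is in your step $(iii)$. The bound $\int_\Omega A(x,t\varphi_{1,1},t\nabla\varphi_{1,1})\,dx\leq C(1+t^{p_1(s_1+1)})$ does \emph{not} follow from $(h_1)$ or Remark~\ref{remmu1}: those give only $A(x,t,\xi)\leq \eta_1\Phi_2(t)+\eta_1(\Phi_2(t)+\phi_2(t))|\xi|^{p_1}+\eta_4$ with $\Phi_2,\phi_2$ arbitrary continuous functions, and the lower bounds in $(h_3)$, $(h_7)$ say nothing about an upper polynomial rate. Worse, the exponent inequality $p_1(s_1+1)<1/\theta_1$ you invoke is not a consequence of the hypotheses: from \eqref{stimamin} one only gets $p_1s_1<1/\theta_1$, and from \eqref{thi<pi} only $p_1<1/\theta_1$; these do not add. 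A concrete counterexample is $p_1=2$, $\theta_1=1/5$, $s_1=2$, which satisfies \eqref{si_pi} yet gives $p_1(s_1+1)=6>5=1/\theta_1$. (The inequality $p_i(s_i+1)<1/\theta_i$ in \eqref{exj0} is an \emph{extra} assumption made only for the model problem.)

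What the paper actually does for $(iii)$ is to extract, from the interaction of $(h_2)$ and $(h_5)$ (via the differential inequality \eqref{Alast}, cf.\ \cite[Proposition~6.5]{CP2}), the sharper upper bound
\[
|A(x,t,\xi)|\ \le\ b_1^*\Bigl(1+|t|^{\frac{1}{\theta_1}\left(1-\frac{\mu_2}{\eta_1}\right)}\Bigr)
+ b_2^*\Bigl(1+|t|^{\frac{1}{\theta_1}\left(1-\frac{\mu_2}{\eta_1}\right)-p_1}\Bigr)|\xi|^{p_1},
\]
whose leading exponent $\frac{1}{\theta_1}\bigl(1-\frac{\mu_2}{\eta_1}\bigr)$ is \emph{strictly less} than $1/\theta_1$. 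Plugging $(\tau\varphi_{1,1},0)$ and using $G(x,\tau\varphi_{1,1},0)\ge h_1(x)\tau^{1/\theta_1}|\varphi_{1,1}|^{1/\theta_1}-\sigma_3$ from \eqref{Gthbis} then yields $\J(\tau\varphi_{1,1},0)\to-\infty$. So the ``delicate exponent check'' you flag is indeed the crux, but it has to be resolved through $(h_2)$--$(h_5)$ rather than through a nonexistent upper bound of order $p_1(s_1+1)$.
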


\begin{theorem}     \label{ThMolt}
Suppose that $A(x,t,\xi)$, $B(x,t,\xi)$ and $G(x,u,v)$ satisfy 
hypotheses $(h_0)$--$(h_6)$, $(g_0)$--$(g_3)$. Moreover, if we assume also that:
\begin{enumerate}
\item[$(h_8)$] $A(x,\cdot,\cdot)$ and $B(x,\cdot,\cdot)$ are 
even in $\R\times\R^N$ for a.e. $x\in\Omega$;
\item[$(g_5)$] taking $\theta_1$, $\theta_2$ as in hypotheses $(h_5)$ and $(g_3)$,
we have that
\[
\liminf_{|(u, v)|\to +\infty}\frac{G(x, u, v)}
{\vert u\vert^{\frac{1}{\theta_1}} +\vert v\vert^{\frac{1}{\theta_2}}}\ >\ 0
\quad \hbox{uniformly a.e. in $\Omega$;}
\]
\item[$(g_6)$] $G(x,\cdot, \cdot)$ is even in $\R^2$ for a.e. $x\in\Omega$;
\end{enumerate}
then functional $\J$ in \eqref{functional} possesses an unbounded sequence 
of critical points $((u_m, v_m))_m\subset X$ such that $\J(u_m, v_m)\nearrow +\infty$; 
hence, problem \eqref{system} admits infinitely many distinct weak bounded solutions.
\end{theorem}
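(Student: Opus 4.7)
The plan is to apply Corollary \ref{multiple} to $\J$ on $X$. The routine requirements are immediate: $\J \in \mathcal{C}^{1}(X,\R)$ by Proposition \ref{smooth1}; evenness of $\J$ follows from $(h_8)$ and $(g_6)$; after replacing $\J$ by $\J - \J(0,0)$ we may assume $\J(0,0) = 0$, which does not alter critical points; and the $(wCPS)$ condition in $\R_+$ is provided by Proposition \ref{PropwCPS}. It therefore remains to verify the geometric hypothesis $(\mathcal{H}_\varrho)$ for every $\varrho > 0$.

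Given $\varrho > 0$, I would pick an integer $m = m(\varrho) \ge 2$ (to be enlarged) and, using the pseudo--eigenfunction decomposition of Section \ref{sec_main}, set
\[
V_\varrho = V_{1,m}\times V_{2,m}, \qquad Z_\varrho = Y_{m-1}^{X_1}\times Y_{m-1}^{X_2}.
\]
Then $\dim V_\varrho = 2m$, $\codim Z_\varrho = 2(m-1) < 2m$, and $V_\varrho + Z_\varrho = X$ since the spectral projection of each $X_i$ onto $V_{i,m-1}$ preserves $L^\infty(\Omega)$. Condition $(iv)$ is the easier half: on the finite--dimensional $V_\varrho$ all norms are equivalent, hypothesis $(g_5)$ delivers $\int_\Omega G(x,u,v)\,dx \ge c_1(\|u\|_{W_1}^{1/\theta_1} + \|v\|_{W_2}^{1/\theta_2}) - c_2$, while the refined upper bounds \eqref{a_eta}, \eqref{b_eta} combined with \eqref{stimamin} control $\int A + \int B$ by polynomial growth of order at most $p_i(s_i+1)$ on $V_\varrho$. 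The compatibility conditions built into \eqref{thi<pi} and \eqref{si_pi} force $1/\theta_i$ to dominate $p_i(s_i+1)$ in the admissible regime, so the $G$-contribution wins and $\J(u,v) \to -\infty$ on $V_\varrho$ as $\|(u,v)\|_X \to \infty$, providing some $R_\varrho > 0$ with $\J(u,v) \le 0$ whenever $(u,v) \in V_\varrho$ satisfies $\|(u,v)\|_X \ge R_\varrho$.

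The decisive obstacle is $(iii)$: establishing $\J(u,v) \ge \varrho$ on $\M_\varrho \cap Z_\varrho$ in the presence of supercritical growth of $G$. The symmetric, $\|\cdot\|_W$-bounded neighborhood $\enne \subset X$ of the origin must be designed so that its boundary $\M_\varrho = \partial \enne$ simultaneously enforces an $L^\infty$--bound $M_\varrho$ and forces the $W$--part of $(u,v)\in\M_\varrho\cap Z_\varrho$ to be large enough to drive the coercive term above $\varrho$; the tension between these two requirements is precisely what makes this step delicate. Granted such a choice of $\enne$, the lower estimates \eqref{A13th}, \eqref{B24th} from Remark \ref{remmu1} give
\[
\int_\Omega A(x,u,\nabla u)\,dx + \int_\Omega B(x,v,\nabla v)\,dx \ge c_3\bigl(\|u\|_{W_1}^{p_1} + \|v\|_{W_2}^{p_2}\bigr) - c_4,
\]
while Remark \ref{sucrit}, in the sharpened form of Remark \ref{rmkcrit}, yields $|G(x,u,v)|\le\sigma_2(1+|u|^{\overline{q}_1}+|v|^{\overline{q}_2})$ with $\overline{q}_i<p_i^*(s_i+1)$. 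The crucial interpolation
\[
\int_\Omega |u|^{\overline{q}_1}\,dx \le |u|_\infty^{\overline{q}_1-p_1}\int_\Omega |u|^{p_1}\,dx \le M_\varrho^{\overline{q}_1-p_1}\,\lambda_{1,m}^{-1}\,\|u\|_{W_1}^{p_1},
\]
based on the Poincar\'e--type inequality \eqref{lambdan+1} on $Y_{m-1}^{X_1}$ (and its analogue for $v$ on $Y_{m-1}^{X_2}$), then absorbs the supercritical contribution into the coercive term once $m$ is taken so large that $M_\varrho^{\overline{q}_i - p_i}\lambda_{i,m}^{-1} \ll c_3$. With $(\mathcal{H}_\varrho)$ established for every $\varrho > 0$, Corollary \ref{multiple} produces the required unbounded sequence of critical points of $\J$ in $X$, each corresponding to a weak bounded solution of \eqref{system}, completing the proof.
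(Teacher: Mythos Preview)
Your overall plan (apply Corollary \ref{multiple}, with Propositions \ref{smooth1} and \ref{PropwCPS} supplying regularity and $(wCPS)$, and $(h_8)$--$(g_6)$ giving evenness) matches the paper exactly, and your treatment of $(\mathcal{H}_\varrho)(iv)$ is essentially Proposition \ref{PropDim}. The gap is in $(\mathcal{H}_\varrho)(iii)$.

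You acknowledge that the neighborhood $\enne$ must ``simultaneously enforce an $L^\infty$--bound $M_\varrho$ and force the $W$--part \dots\ to be large'', call this ``delicate'', and then write ``Granted such a choice of $\enne$''. But no such $\enne$ is produced, and in fact none with these properties exists in the required sense. The space $Z_\varrho = Y_{m-1}^{X_1}\times Y_{m-1}^{X_2}$ is infinite--dimensional, and for $p_i<N$ there is no control of $|u|_\infty$ by $\|u\|_{W_i}$ on $Y_{m-1}^{X_i}$. Hence if $\enne$ is any symmetric $\|\cdot\|_W$--bounded neighborhood of the origin, its boundary intersected with $Z_\varrho$ will contain points with arbitrarily large $|u|_\infty$; conversely, if you force $|u|_\infty\le M_\varrho$ on $\partial\enne$ then $\partial\enne\cap Z_\varrho$ will contain points where $\|u\|_{W_1}$ is as small as you like. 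Either way your interpolation $\int_\Omega|u|^{\overline q_1}\,dx\le |u|_\infty^{\overline q_1-p_1}\lambda_{1,m}^{-1}\|u\|_{W_1}^{p_1}$ does not give a uniform lower bound $\J\ge\varrho$ on $\M_\varrho\cap Z_\varrho$. There is also a circularity: you want to fix $M_\varrho$ first and then enlarge $m$, but enlarging $m$ to reach level $\varrho$ typically forces the radius, and hence $M_\varrho$, to grow as well.

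The paper avoids the $L^\infty$ route entirely. It introduces the continuous functional $\ell$ in \eqref{ell}, built from $\|u\|_{W_1}$, $\|v\|_{W_2}$ and the \emph{weighted} norms $\|\,|u|^{s_1}u\,\|_{W_1}$, $\|\,|v|^{s_2}v\,\|_{W_2}$, and takes $\M_\varrho=\{\ell=R_m\}$; this set is automatically the boundary of a symmetric $\|\cdot\|_W$--bounded neighborhood of $0$ in $X$ (Remark \ref{suell}). The point is that a bound on $\|\,|u|^{s_1}u\,\|_{W_1}$ controls $|u|_{p_1^*(s_1+1)}$ via Sobolev, which is exactly enough for the supercritical exponent $\overline q_1<p_1^*(s_1+1)$. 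The lower bound on $\J$ over $\M_\varrho\cap Z_\varrho$ is then obtained in Proposition \ref{PropMolt} by the interpolation
\[
\int_\Omega|u|^{\overline q_1}\,dx\ \le\ c\,[\ell_1(u)]^{\frac{\overline q_1-r_1}{s_1+1}}\Bigl(\int_\Omega|u|^{p_1}\,dx\Bigr)^{r_1/p_1}\ \le\ c\,\lambda_{1,m+1}^{-r_1/p_1}\,[\ell_1(u)]^{\frac{r_1s_1+\overline q_1}{s_1+1}},
\]
with $r_1$ chosen so that $\frac{r_1}{p_1}+\frac{\overline q_1-r_1}{p_1^*(s_1+1)}=1$, together with \eqref{lambdan+1}. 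Replacing your $|u|_\infty$--based estimate by this one closes the argument.
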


Finally, by reasoning as in \cite[Corollary 5.4]{CSS}, we can state this further 
multiplicity result since the supercritical growth 
in \eqref{minmaxPerera1} does not affect its proof.

\begin{corollary}
Let $p_1, p_2>1$ and suppose that the functions $A(x, t, \xi)$, $B(x, t, \xi)$ 
and $G(x, u, v)$ satisfy assumptions $(h_0)$--$(h_6)$, $(h_8)$, $(g_0)$--$(g_3)$ and $(g_6)$. Furthermore, if
\begin{enumerate}
\item[$(g_7)$] $\; \inf\{G(x,w,z):\ x \in \Omega,\ (w,z) \in \R^2 \ 
\hbox{such that $|(w,z)|=R$}\} > 0$, with $R$ as in $(g_2)$;  
\item[$(g_8)$] $\; \theta_1=\theta_2$, with $\theta_1$, $\theta_2$ as in $(h_5)$ and $(g_3)$;
\end{enumerate}
are satisfied too, the even functional $\J$ in \eqref{functional} possesses a sequence of critical 
points $((u_m,v_m))_m$ in $X$ such that $\J(u_m,v_m)\nearrow +\infty$; hence,
problem \eqref{system} admits infinitely many distinct weak bounded solutions.
\end{corollary}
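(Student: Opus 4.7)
The plan is to reduce this corollary to Theorem \ref{ThMolt} by showing that the asymptotic bound $(g_5)$ of that theorem is a consequence of the additional hypotheses $(g_7)$, $(g_8)$ together with the Ambrosetti--Rabinowitz inequality $(g_3)$ which is already among the standing assumptions. Indeed, the remaining hypotheses of Theorem \ref{ThMolt}, namely $(h_0)$--$(h_6)$, $(h_8)$, $(g_0)$--$(g_3)$ and $(g_6)$, match those of the corollary verbatim; once $(g_5)$ is established, Theorem \ref{ThMolt} directly produces the desired unbounded sequence of critical points of $\J$ in $X$ and the claim follows.

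To verify $(g_5)$ I would set $\theta := \theta_1 = \theta_2$ by $(g_8)$, and for a.e. $x \in \Omega$ and each $(w,z) \in \R^2$ with $|(w,z)| = R$ introduce the auxiliary function $\Phi(\lambda) := G(x, \lambda w, \lambda z)$ for $\lambda \geq 1$. Since $|(\lambda w, \lambda z)| = \lambda R \geq R$ and $\Phi(\lambda) > 0$ by the positivity part of $(g_3)$, the chain rule combined with $(g_3)$ gives the separable radial inequality
\[
\lambda\,\Phi'(\lambda)\ =\ G_u(x, \lambda w, \lambda z)\,\lambda w + G_v(x, \lambda w, \lambda z)\,\lambda z\ \geq\ \frac{1}{\theta}\,\Phi(\lambda),
\]
which, upon integration of $(\log\Phi)'(\lambda) \geq 1/(\theta\lambda)$ over $[1,\lambda_0]$, yields $G(x, \lambda_0 w, \lambda_0 z) \geq G(x, w, z)\,\lambda_0^{1/\theta}$. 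Then the uniform lower bound $G(x, w, z) \geq c_0 > 0$ provided by $(g_7)$, applied with $(w,z) = R(u,v)/|(u,v)|$ and $\lambda_0 = |(u,v)|/R$, leads to
\[
G(x, u, v)\ \geq\ c_0\, R^{-1/\theta}\,|(u, v)|^{1/\theta} \quad \mbox{ a.e. in } \Omega \mbox{ if } |(u,v)| \geq R.
\]

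Finally, since \eqref{thi<pi} forces $1/\theta > p_i > 1$ for $i \in \{1,2\}$, the elementary inequality $(u^2+v^2)^{1/(2\theta)} \geq \tfrac{1}{2}(|u|^{1/\theta}+|v|^{1/\theta})$ allows me to transform the previous display into precisely the liminf bound required by $(g_5)$, and Theorem \ref{ThMolt} applies. The main technical point of the plan is the radial integration itself: the equality $\theta_1 = \theta_2$ supplied by $(g_8)$ is exactly what makes the right--hand side of the Ambrosetti--Rabinowitz inequality collapse into a single radial quantity, so without it the exponents in $u$ and $v$ would not align along rays and the ODE approach would break down. Hypothesis $(g_7)$ then serves as the uniform boundary condition for this ODE on the sphere $\{|(w,z)|=R\}$, effectively substituting for the direct growth assumption $(g_5)$ of Theorem \ref{ThMolt}.
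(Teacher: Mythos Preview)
Your proposal is correct and follows essentially the same route the paper intends: the paper does not spell out the argument but points to \cite[Corollary 5.4]{CSS}, where the radial integration of the Ambrosetti--Rabinowitz inequality $(g_3)$ (made possible precisely by $\theta_1=\theta_2$) together with the uniform positivity on the sphere $\{|(w,z)|=R\}$ from $(g_7)$ yields the lower bound $G(x,u,v)\ge c_0 R^{-1/\theta}|(u,v)|^{1/\theta}$, hence $(g_5)$, after which Theorem \ref{ThMolt} applies. One minor remark: the elementary comparison $(u^2+v^2)^{1/(2\theta)}\ge\tfrac12(|u|^{1/\theta}+|v|^{1/\theta})$ holds for every $\theta>0$ via $\max(|u|,|v|)^{1/\theta}\ge\tfrac12(|u|^{1/\theta}+|v|^{1/\theta})$, so the appeal to \eqref{thi<pi} is not strictly needed there.
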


Before turning to the proof of our main results, 
we observe that if assumption $(h_3)$, and then $(h_7)$, 
holds with $s_1 = s_2 =0$, then Theorem \ref{ThExist} reduces to \cite[Theorem 5.1]{CS}
while Theorem \ref{ThMolt} reduces to \cite[Theorem 5.2]{CS} but with $m=2$. 
Actually, the same holds true if both $p_1\ge N$ and $p_2\ge N$.
Thus, in order to improve such previous results, here we assume 
that either $s_1 >0$ or $s_2 >0$ and we define 
\begin{equation}  \label{ell12}
\ell_i(y) = \max\{\|y\|_{W_i}, \| |y|^{s_i} y\|_{W_i}\} \quad 
\hbox{if $ y\in X_i$,}\qquad \hbox{with $i \in \{1,2\}$,}
\end{equation}
and then
\begin{equation}    \label{ell}
\ell(u, v) = \max\{\|(u, v)\|_{W}, \| (|u|^{s_1} u, |v|^{s_2} v)\|_{W}\} 
\quad \hbox{if $ \ (u, v)\in X$.}
\end{equation}

From definitions \eqref{ell12} and \eqref{ell} we have that
\begin{equation}    \label{ell220}
[\ell_i(y)]^{p_i}\ \le\ \|y\|_{W_i}^{p_i} + \| |y|^{s_i} y\|_{W_i}^{p_i} \quad 
\hbox{if $ y\in X_i$,}\qquad \hbox{with $i \in \{1,2\}$,}
\end{equation}
and
\begin{equation}    \label{ell22}
\max\{\ell_1(u), \ell_2(v)\}\ \le\ \ell(u,v) \ \le \ \ell_1(u) + \ell_2(v)
\quad \hbox{for all $ \ (u, v)\in X$.}
\end{equation}
Moreover, taking $\bar{p}=\min\{p_1, p_2\}$, direct computations imply that
\begin{equation}  \label{ellnew}
[\ell_1(u)]^{p_1} + [\ell_2(v)]^{p_2} \ge \left[\frac{\ell(u, v)}{2}\right]^{\bar{p}} \quad
\mbox{ if } (u, v)\in X \mbox{ is such that } \ell(u, v)\ge 2.
\end{equation}

\begin{remark}\label{suell}
For both $i \in \{1,2\}$ definition \eqref{Xidefn} and identity \eqref{nablaconto}
imply that the function $y\mapsto\| |y|^{s_i} y\|_{W_i}$ is continuous and well defined in $(X_i, \|\cdot\|_{X_i})$
and so $\ell_i:X_i\to\R$ is continuous, too.
Thus, from \eqref{Xdefn} we have that $(u, v)\mapsto\| (|u|^{s_1} u, |v|^{s_2} v)\|_{W}$ 
is continuous and well defined in $(X, \|\cdot\|_X)$, then 
also $\ell:X\to\R$ is continuous with respect $\|\cdot\|_{X}$
and definition \eqref{ell} implies that
$\ell(u, v) \ge \|(u, v)\|_{W}$ for all $ \ (u, v)\in X$ with $\ell(0,0) = 0$.
\end{remark}

Throughout the remaining part of this section, for simplicity we assume that
\begin{equation}\label{ABzeros}
\int_{\Omega} A(x, 0, {\bf 0}_N) dx =0, \quad \int_{\Omega} B(x, 0, {\bf 0}_N) dx =0,
\end{equation}
with ${\bf 0}_N =(0,\dots, 0)\in\R^N$, and 
\begin{equation} \label{Gzeros}
\int_{\Omega} G(x, 0, 0) dx = 0.
\end{equation}
Differently, one can always replace $\J(u, v)$ in \eqref{functional}
with the new functional 
\[
\J^*(u, v) = \J(u, v) -\int_{\Omega} A(x, 0, {\bf 0}_N) dx - 
\int_{\Omega} B(x, 0, {\bf 0}_N) dx +\int_{\Omega} G(x,0,0) dx,
\]
since they share the same differential on $X$ and so the same critical points.

Moreover, we denote by $c_i$ every positive constant which arises during computations.
\smallskip

Now, we can prove our existence result.

\begin{proof}[Proof of Theorem \ref{ThExist}]
Firstly, hypothesis $(g_4)$ allows us to take $\bar{\lambda} > 0$ such that
\begin{equation}   \label{lambda_segn}
\limsup_{(u, v)\to (0,0)} \frac{G(x,u,v)}{\vert u\vert^{p_1}+|v|^{p_2}} 
<\bar{\lambda}< \alpha_2\min\{\lambda_{1,1}, \lambda_{2,1}\}
\quad \hbox{uniformly a.e. in $\Omega$.}
\end{equation}
Thus, from \eqref{lambda_segn} and direct computations,
estimate \eqref{Gsigmamax} ensures the existence of a constant $\sigma^*>0$ such that
\begin{equation}   \label{ls}
G(x,u,v) \leq \bar{\lambda} (|u|^{p_1} +|v|^{p_2}) +\sigma^*(|u|^{\bar{q}_1} +|v|^{\bar{q}_2})
\quad \hbox{for a.e. $x \in \Omega$, for all $(u,v) \in \R^2$,}
\end{equation}
with $\overline{q}_1, \overline{q}_2$ as in \eqref{Ggrow} so that \eqref{minmaxPerera1} holds.
Moreover, taking $s_1$, $s_2$ as in our setting of hypotheses and fixing any couple $(u,v) \in X$, 
from definition \eqref{functional}, condition $(h_7)$, estimate \eqref{ls}
together with \eqref{nablaconto} and \eqref{autoval_i} 
it follows that
\begin{equation}   \label{Jsum}
\begin{split}
\J(u, v)&\geq\left(\alpha_2-\frac{\bar{\lambda}}{\lambda_{1,1}}\right)\|u\|_{W_1}^{p_1} 
+\frac{\alpha_2}{(s_1+1)^{p_1}}\| |u|^{s_1} u\|_{W_1}^{p_1} -\sigma^* |u|_{\bar{q}_1}^{\bar{q}_1}\\
&\quad + \left(\alpha_2-\frac{\bar{\lambda}}{\lambda_{2,1}}\right)\|v\|_{W_2}^{p_2} 
+\frac{\alpha_2}{(s_2+1)^{p_2}}\| |v|^{s_2} v\|_{W_2}^{p_2} -\sigma^* |v|_{\bar{q}_2}^{\bar{q}_2},
\end{split}
\end{equation}
where from \eqref{minmaxPerera1} and the Sobolev inequality \eqref{Sobpi}
we have that
\begin{equation}     \label{uq1}
\int_{\Omega}|y|^{\bar{q}_i} dx = \int_{\Omega}||y|^{s_i} y|^{\frac{\bar{q}_i}{s_i+1}} dx
\leq c_1 \| |y|^{s_i} y\|_{W_i}^{\frac{\bar{q}_i}{s_i+1}}\quad \hbox{for all $y \in X_i$, with $i \in \{1,2\}$,}
\end{equation}
for a suitable $c_1 > 0$ independent of $i$.
Then, by using \eqref{uq1} in \eqref{Jsum}, from \eqref{lambda_segn} a positive constant $c_2 > 0$
exists such that definition \eqref{ell12},
estimate \eqref{ell220} and direct computations imply that
\[
\begin{split}
\J(u,v)&\geq c_2 (\|u\|_{W_1}^{p_1} + \| |u|^{s_1} u\|_{W_1}^{p_1}) - c_3 \| |u|^{s_1} u\|_{W_1}^{\frac{\bar{q}_1}{s_1+1}}
+ c_2 (\|v\|_{W_2}^{p_2} + \| |v|^{s_2} v\|_{W_2}^{p_2}) - c_3 \| |v|^{s_2} v\|_{W_2}^{\frac{\bar{q}_2}{s_2+1}}\\
&\ge [\ell_1(u)]^{p_1}\left(c_2 - c_3 [\ell_1(u)]^{\frac{\bar{q}_1}{s_1+1} - p_1}\right)
+ [\ell_2(v)]^{p_2} \left(c_2 - c_3 [\ell_2(v)]^{\frac{\bar{q}_2}{s_2+1} - p_2}\right),
\end{split}
\]
for a suitable $c_3>0$; hence, 
from \eqref{minmaxPerera1} and \eqref{ell22}, we obtain that 
\begin{equation}   \label{lconti}
\J(u,v)\ \geq\ [\ell_1(u)]^{p_1}\left(c_2 - c_3 [\ell(u,v)]^{\frac{\bar{q}_1}{s_1+1} - p_1}\right)
+ [\ell_2(v)]^{p_2} \left(c_2 - c_3 [\ell(u,v)]^{\frac{\bar{q}_2}{s_2+1} - p_2}\right).
\end{equation}
We note that, again from \eqref{minmaxPerera1}, a radius 
$r_0 > 0$ and a constant $\varrho_1$ can be found so that  
\[
c_2 - c_3 r_0^{\frac{\bar{q}_i}{s_i+1} - p_i}\ \ge\ \varrho_1 > 0\quad
\hbox{for both $i=1$ and $i=2$},
\]
thus, from \eqref{ell22} and \eqref{lconti} we infer that a constant $\varrho_0 > 0$ exists such that
\begin{equation}   \label{2i}
\ell(u, v) = r_0\quad \implies\quad \J(u, v)\geq\varrho_0.
\end{equation}
On the other hand, from $(h_0)$--$(h_2)$ and $(h_5)$ 
we have that \cite[Proposition 6.5]{CP2} implies the existence of
some constants $b^*_1$, $b_2^* >0$ such that 
\[
|A(x, t,\xi)|\leq b^*_1\left(1 + |t|^{\frac{1}{\theta_1}\left(1-\frac{\mu_2}{\eta_1}\right)}\right) 
+ b^*_2 \left(1 + |t|^{\frac{1}{\theta_1}\left(1-\frac{\mu_2}{\eta_1}\right) - p_1}\right) |\xi|^{p_1}
\]
a.e. in $\Omega$ and for all $(t,\xi) \in \R\times\R^N$,
with $\eta_1$, $\mu_2$ as in $(h_2)$, respectively $(h_5)$, and, without loss of generality,
we can assume 
$\frac{1}{\theta_1}\left(1-\frac{\mu_2}{\eta_1}\right) - p_1 >0$ 
(a priori, we can take either $\mu_2$ small enough or $\eta_1$ large enough).
Thus, taking $\varphi_{1,1}\in X_1$ as in \eqref{eig}, from \eqref{functional}, \eqref{Gthbis}, \eqref{ABzeros}
and direct computations, we obtain that
\[
\begin{split}
\J(\tau \varphi_{1,1}, 0) &\leq b_1^* \tau^{\frac{1}{\theta_1}(1-\frac{\mu_2}{\eta_1})}
\int_{\Omega}|\varphi_{1,1}|^{\frac{1}{\theta_1}(1-\frac{\mu_2}{\eta_1})} dx 
+ b_2^* \tau^{p_1}\int_{\Omega} |\nabla \varphi_{1,1}|^{p_1} dx\\
&+b_2^* \tau^{\frac{1}{\theta_1}(1-\frac{\mu_2}{\eta_1})}
\int_{\Omega}|\varphi_{1,1}|^{\frac{1}{\theta_1}(1-\frac{\mu_2}{\eta_1})-p_1} |\nabla\varphi_{1,1}|^{p_1} dx 
- \tau^{\frac{1}{\theta_1}}\int_{\Omega} h_1(x)|\varphi_{1,1}|^{\frac{1}{\theta_1}} dx + c_4,
\end{split}
\]
for a suitable $c_4 > 0$, which implies, from \eqref{thi<pi}, that 
\[
\J(\tau \varphi_{1,1}, 0)\to -\infty \quad\mbox{ as }\; \tau \to +\infty
\]
as \eqref{eig} and Remark \ref{rmkPerera0} ensure that $\int_{\Omega} h_1(x)|\varphi_{1,1}|^{\frac{1}{\theta_1}} dx>0$.\\
Hence, considering $r_0$, $\varrho_0$ so that \eqref{2i} holds,
a point $e_1\in X_1$ can be found so that 
\begin{equation} \label{3i}
\|(e_1, 0)\|_W >r_0 \quad\mbox{ and }\quad\J(e_1, 0)<\varrho_0.
\end{equation}
Finally, from \eqref{functional}, \eqref{ABzeros} and \eqref{Gzeros} it is $\J(0, 0)=0$, 
which, together with Remark \ref{suell}, \eqref{2i}, \eqref{3i} and Propositions \ref{smooth1} and \ref{PropwCPS},
ensures that Theorem \ref{mountainpass} applies and a critical point $(u, v)$ exists in $X$ such that $\J(u, v)\geq\varrho_0 >0$.
\end{proof}

In order to prove our multiplicity theorem, some geometric 
conditions are needed. 
In particular, if assumptions $(h_0)$--$(h_6)$ and $(g_0)$--$(g_3)$ hold, 
we are able to state the following results.

\begin{proposition}     \label{PropMolt}
For any fixed $\varrho\in\R$, an integer $m=m(\varrho)\geq 1$ 
and a radius $R_m >0$ exist such that
\[
(u, v)\in Y_m^{X_1}\times Y_m^{X_2}, \quad \ell(u, v) = R_m
\qquad \implies\qquad \J(u, v)\geq \varrho.
\]
\end{proposition}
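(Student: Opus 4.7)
The plan is to combine the coercive lower bound on $\int_\Omega A+\int_\Omega B$, which yields a $\bar p$--power of $\ell(u,v)$ with $\bar p:=\min\{p_1,p_2\}$, with a controlled upper bound on $\int_\Omega G$ where the pseudo--eigenvalue inequality \eqref{lambdan+1} on $Y_{i,m}$ produces a decaying factor in $\lambda_{i,m+1}$. Combining \eqref{A13th}, \eqref{B24th} with the identity \eqref{nablaconto}, one gets
\[
\int_{\Omega}A(x,u,\nabla u)\,dx+\int_{\Omega}B(x,v,\nabla v)\,dx \geq c_1\bigl(\|u\|_{W_1}^{p_1}+\||u|^{s_1}u\|_{W_1}^{p_1}+\|v\|_{W_2}^{p_2}+\||v|^{s_2}v\|_{W_2}^{p_2}\bigr)-c_2,
\]
whence \eqref{ell220} and \eqref{ellnew} give the lower bound $c_3[\ell(u,v)]^{\bar p}-c_2$ as soon as $\ell(u,v)\geq 2$.

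For the nonlinear term, \eqref{Gsigmamax} reduces matters to bounding $|u|_{\bar q_1}^{\bar q_1}$ and $|v|_{\bar q_2}^{\bar q_2}$, with $\bar q_i$ as in \eqref{Ggrow}. For $u\in Y_m^{X_1}\subset Y_{1,m}$, \eqref{lambdan+1} yields $|u|_{p_1}\leq \lambda_{1,m+1}^{-1/p_1}\|u\|_{W_1}$, while the identity $|u|_{p_1^*(s_1+1)}^{p_1^*(s_1+1)}=\bigl|\,|u|^{s_1}u\,\bigr|_{p_1^*}^{p_1^*}$, together with Sobolev, gives $|u|_{p_1^*(s_1+1)}\leq C\||u|^{s_1}u\|_{W_1}^{1/(s_1+1)}$. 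Since \eqref{minmaxPerera1} provides $p_1<\bar q_1<p_1^*(s_1+1)$, a standard $L^{\bar q_1}$--interpolation with $\beta_1\in(0,1)$ determined by $\frac{1}{\bar q_1}=\frac{\beta_1}{p_1}+\frac{1-\beta_1}{p_1^*(s_1+1)}$ produces
\[
|u|_{\bar q_1}^{\bar q_1}\leq C\,\lambda_{1,m+1}^{-\beta_1\bar q_1/p_1}\,\|u\|_{W_1}^{\beta_1\bar q_1}\,\||u|^{s_1}u\|_{W_1}^{(1-\beta_1)\bar q_1/(s_1+1)},
\]
and analogously for $|v|_{\bar q_2}^{\bar q_2}$. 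On the sphere $\ell(u,v)=R_m\geq 2$ both $\|u\|_{W_1},\||u|^{s_1}u\|_{W_1}$ (and the analogous quantities for $v$) are dominated by $R_m$, so, setting $\delta_i:=\bar q_i[\beta_i+(1-\beta_i)/(s_i+1)]$, one obtains
\[
\J(u,v)\geq c_3 R_m^{\bar p}-c_4-c_5\sum_{i=1}^{2}\lambda_{i,m+1}^{-\beta_i\bar q_i/p_i}\,R_m^{\delta_i}.
\]

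Given $\varrho\in\R$, since $\delta_i>p_i\geq\bar p$, the decisive balance is $R_m^{\delta_i-\bar p}\leq c\,\lambda_{i,m+1}^{\beta_i\bar q_i/p_i}$. Taking $R_m$ of order $\lambda_{m+1}^{\kappa}$, with $\kappa:=\min_i\frac{\beta_i\bar q_i/p_i}{\delta_i-\bar p}$ and $\lambda_{m+1}:=\min\{\lambda_{1,m+1},\lambda_{2,m+1}\}$, forces $R_m\nearrow+\infty$ by \eqref{lambda_m}, absorbs the last sum into half of $c_3 R_m^{\bar p}$, and makes $\frac{c_3}{2}R_m^{\bar p}-c_4\geq\varrho$ for all sufficiently large $m$, thereby producing the required couple $(m,R_m)$.

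The main obstacle is exactly the supercritical exponent $\bar q_i$ permitted by $(g_2)$: when $\bar q_i>p_i^*$, a direct Sobolev embedding from $W_1$ is not enough. The remedy is the two--scale interpolation above, which exploits the extra integrability $u\in L^{p_i^*(s_i+1)}(\Omega)$ granted by $|u|^{s_i}u\in W_i$: this injects the decaying factor $\lambda_{i,m+1}^{-\beta_i\bar q_i/p_i}$ which ultimately defeats $R_m^{\delta_i}$ despite $\delta_i>\bar p$; the bookkeeping of the exponents $\beta_i$ and $\delta_i$ as functions of $s_i, p_i, \bar q_i$ is the technical heart of the argument.
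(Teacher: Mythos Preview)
Your argument is correct and follows essentially the same route as the paper's proof. The interpolation you set up is identical to the paper's: with the paper's notation $r_i$ determined by $\frac{r_i}{p_i}+\frac{\bar q_i-r_i}{p_i^*(s_i+1)}=1$, one has $r_i=\beta_i\bar q_i$ and $\frac{r_is_i+\bar q_i}{s_i+1}=\delta_i$, so your decaying factor $\lambda_{i,m+1}^{-\beta_i\bar q_i/p_i}$ and exponent $\delta_i$ coincide with the paper's. The only cosmetic difference is that the paper keeps the two pieces $[\ell_i]^{p_i}$ separate, factors them out, and balances each against $R_{i,m}^{\delta_i-p_i}$ before taking $R_m=\min\{R_{1,m},R_{2,m}\}$ and invoking \eqref{ellnew} at the very end, whereas you pass to $R_m^{\bar p}$ earlier via \eqref{ellnew} and balance against $R_m^{\delta_i-\bar p}$; both choices lead to $R_m\nearrow+\infty$ and the desired conclusion.
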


\begin{proof}
Firstly, we note that \eqref{nablaconto} and \eqref{ell220}
imply that
\begin{equation}    \label{stim}
\int_\Omega(1+|y|^{s_i p_i}) |\nabla y|^{p_i} dx \ge\ 
\frac{1}{(s_i+1)^{p_i}}\ [\ell_i(y)]^{p_i}\quad
\hbox{if $y \in X_i$, for each $i \in \{1,2\}$.}
\end{equation}
Then, taking $(u,v)\in X$, from \eqref{functional}, 
\eqref{A13th}, \eqref{B24th}, \eqref{stim}, together with
\eqref{Gsigmamax} where $\bar{q}_1$, $\bar{q}_2$ satisfy \eqref{minmaxPerera1}, 
we obtain that
\begin{equation}    \label{Jgeq}
\begin{split}
\J(u,v)\ \geq\ &\frac{\mu_0(\mu_1\theta_1 +\mu_2)}{(s_1+1)^{p_1}}\ [\ell_1(u)]^{p_1} +
\frac{\mu_0(\mu_1\theta_2 +\mu_2)}{(s_2+1)^{p_2}} \ [\ell_2(v)]^{p_2}\\
&-\ \sigma_2 \int_{\Omega}|u|^{\bar{q}_1} dx
\ -\ \sigma_2 \int_{\Omega}|v|^{\bar{q}_2} dx - c_1,
\end{split}
\end{equation}
for some $c_1>0$. We note that for each $i \in \{1,2\}$
condition \eqref{minmaxPerera1} allows us to take $r_i >0$ so that
\[
\frac{r_i}{p_i} +\frac{\bar{q}_i -r_i}{p_i^{\ast}(s_i+1)} =1,
\]
then, reasoning as in \cite[Proposition 4.5]{CPSSUP}, 
from classical interpolation arguments, \eqref{Sobpi}
and \eqref{ell12} we obtain that
\[
\int_{\Omega}|y|^{\bar{q}_i} dx\ \leq\ c_2 [\ell_i(y)]^{\frac{\bar{q}_i -r_i}{s_i+1}} \
\left(\int_{\Omega}|y|^{p_i} dx\right)^{\frac{r_i}{p_i}}
\quad\mbox{ for all } y\in X_i,
\]
for a suitable constant $c_2 > 0$ independent of $i$.\\
Thus, fixing any $m\in\N$, from \eqref{lambdan+1} and, again,
\eqref{ell12} it follows that
\begin{equation}    \label{interp}
\int_{\Omega}|y|^{\bar{q}_i} dx\ \leq\ c_2 
 \lambda_{i, m+1}^{-\frac{r_i}{p_i}}[\ell_i(y)]^{\frac{r_is_i + \bar{q}_i}{s_i+1}}
\quad\mbox{ for all } y \in Y_m^{X_i},
\end{equation}
where from \eqref{minmaxPerera1} it is 
\begin{equation}    \label{interp4}
\frac{r_i s_i +\bar{q}_i}{s_i+1}\ >\ p_i.
\end{equation}
Hence, taking any couple $(u, v)\in Y_m^{X_1}\times Y_m^{X_2}$,
by using estimate \eqref{interp} in \eqref{Jgeq} we obtain that
\[
\J(u,v) \geq c_3\ [\ell_1(u)]^{p_1} 
- c_4  \lambda_{1,m+1}^{-\frac{r_1}{p_1}}[\ell_1(u)]^{\frac{r_1s_1 + \bar{q}_1}{s_1+1}}
+ c_3 [\ell_2(v)]^{p_2} 
- c_4  \lambda_{2, m+1}^{-\frac{r_2}{p_2}}[\ell_2(v)]^{\frac{r_2s_2 + \bar{q}_2}{s_2+1}} - c_1
\]
or better, from \eqref{ell22} and \eqref{interp4}, we have that
\begin{equation}    \label{Jgeq2}
\begin{split}
\J(u,v)\ \ge\ & [\ell_1(u)]^{p_1}\ \left(c_3 - c_4  \lambda_{1,m+1}^{-\frac{r_1}{p_1}}\
[\ell(u,v)]^{\frac{r_1s_1 + \bar{q}_1}{s_1+1}-p_1}\right)\\
&+ [\ell_2(v)]^{p_2} \left(c_3 - c_4  \lambda_{2, m+1}^{-\frac{r_2}{p_2}}\ 
[\ell(u,v)]^{\frac{r_2s_2 + \bar{q}_2}{s_2+1}-p_2}\right)\ -\ c_1.
\end{split}
\end{equation}
Now, for each $i \in \{1,2\}$, from \eqref{interp4} we can define $R_{i,m} > 0$ so that
\begin{equation}    \label{rayi}
c_4  \lambda_{i,m+1}^{-\frac{r_i}{p_i}}\ 
R_{i,m}^{\frac{r_is_i + \bar{q}_i}{s_i+1}-p_i}\ =\ \frac{c_3}{2}
\quad \iff\quad
R_{i,m}\ =\ \left(\frac{c_3}{2 c_4}\
 \lambda_{i,m+1}^{\frac{r_i}{p_i}}\right)^{\frac{s_i+1}{r_is_i + \bar{q}_i-p_i(s_i+1)}}
\end{equation}
and, since from \eqref{lambda_m} it follows that $R_{i,m} \nearrow +\infty$ as $m \to +\infty$,
we have that
\begin{equation}    \label{ray}
R_{m} := \min \{R_{1,m}, R_{2,m}\} \ \to\ +\infty\quad \hbox{as $m \to +\infty$}
\end{equation}
which implies $R_m \ge 2$ for all $m \ge m_0$ if $m_0\in \N$ is large enough.\\
So, for any $m \ge m_0$, taking $(u,v)\in Y_m^{X_1}\times Y_m^{X_2}$ 
such that $\ell(u,v) = R_m$,
from \eqref{ellnew} we have that
\begin{equation}    \label{ray2}
[\ell_1(u)]^{p_1} + [\ell_2(v)]^{p_2} \ \ge\ \left(\frac{R_m}{2}\right)^{\bar{p}},
\end{equation}
while from \eqref{Jgeq2}, by using \eqref{interp4}, \eqref{rayi} and the definition in \eqref{ray}, 
we obtain 
\[
\begin{split}
\J(u,v)\ge\ & [\ell_1(u)]^{p_1} \left(c_3 - c_4  \lambda_{1,m+1}^{-\frac{r_1}{p_1}}
R_m^{\frac{r_1s_1 + \bar{q}_1}{s_1+1}-p_1}\right)
+ [\ell_2(v)]^{p_2} \left(c_3 - c_4  \lambda_{2, m+1}^{-\frac{r_2}{p_2}} 
R_m^{\frac{r_2s_2 + \bar{q}_2}{s_2+1}-p_2}\right) - c_1\\
\ge\ & 
[\ell_1(u)]^{p_1} \left(c_3 - c_4  \lambda_{1,m+1}^{-\frac{r_1}{p_1}}
R_{1,m}^{\frac{r_1s_1 + \bar{q}_1}{s_1+1}-p_1}\right)
+ [\ell_2(v)]^{p_2} \left(c_3 - c_4  \lambda_{2, m+1}^{-\frac{r_2}{p_2}} 
R_{2,m}^{\frac{r_2s_2 + \bar{q}_2}{s_2+1}-p_2}\right) - c_1\\
=\ & 
\frac{c_3}{2}\ \left([\ell_1(u)]^{p_1} + [\ell_2(v)]^{p_2}\right) - c_1. 
\end{split}
\]
Thus, for any $m \ge m_0$ estimate \eqref{ray2} implies that
\begin{equation}    \label{Jgeq3}
\J(u,v)\ \ge\ \frac{c_3}{2} \ \left(\frac{R_m}{2}\right)^{\bar{p}} - c_1 
 \quad \hbox{if $(u,v)\in Y_m^{X_1}\times Y_m^{X_2}$ is  
such that $\ell(u,v) = R_m$.}
\end{equation}
Finally, we note that the proof follows from \eqref{ray} and \eqref{Jgeq3}.
\end{proof}

At last, by reasoning as in the first part of the proof of \cite[Theorem 5.2]{CS}
(we note that the computations do not involve the supercritical growth of $G(x,u,v)$
but only its lower bound coming from assumption $(g_5)$), 
the following result can be stated, too.

\begin{proposition}    \label{PropDim}
If also hypothesis $(g_5)$ holds, then
for any finite--dimensional subspace $V$ of $X$ a suitable radius $R_V>0$ exists such that
\[
\J(u, v)\leq 0 \quad\mbox{ for all  $(u, v)\in V$ such that } \ \|(u, v)\|_X\geq R_V.
\]
In particular, the functional $\J$ is bounded form above in $V$.
\end{proposition}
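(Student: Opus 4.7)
The plan is to combine an upper estimate on $A$ and $B$, whose growth in $t$ is of order strictly less than $1/\theta_1$ and $1/\theta_2$ respectively, with the lower bound on $G$ furnished by $(g_5)$, and then to exploit the equivalence of norms on the finite--dimensional subspace $V$ to conclude that $\J(u,v)\to-\infty$ on $V$ as $\|(u,v)\|_X\to+\infty$. Since $V$ has finite dimension, this would immediately yield both $\sup_V \J < +\infty$ and the existence of the desired radius $R_V$ beyond which $\J$ is non--positive.

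First I would revisit the pointwise estimate already used in the proof of Theorem \ref{ThExist}: by $(h_0)$--$(h_2)$, $(h_5)$ and \cite[Proposition 6.5]{CP2}, there exist $b_1^{*}, b_2^{*} > 0$ such that $A(x,t,\xi) \le b^{*}_1(1 + |t|^{\alpha_1}) + b^{*}_2 (1 + |t|^{\alpha_1 - p_1}) |\xi|^{p_1}$, with $\alpha_1 = \tfrac{1}{\theta_1}(1 - \tfrac{\mu_2}{\eta_1})$; by possibly shrinking $\mu_2$ or enlarging $\eta_1$ in $(h_2)$--$(h_5)$ I would arrange that $p_1 < \alpha_1 < \tfrac{1}{\theta_1}$, and an analogous bound would hold for $B$ with an exponent $\alpha_2$ satisfying $p_2 < \alpha_2 < \tfrac{1}{\theta_2}$. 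On the other hand, $(g_5)$ together with the boundedness of $G$ on bounded subsets of $\R^2$ (uniform in $x$, by $(g_0)$--$(g_1)$) provides $c_0, C_0 > 0$ with
\[
G(x,u,v)\ \ge\ c_0\bigl(|u|^{1/\theta_1} + |v|^{1/\theta_2}\bigr) - C_0
\qquad \hbox{a.e.\ in $\Omega$, for all $(u,v)\in\R^2$.}
\]
Integrating and using $\alpha_1 > p_1$ to control $\int_\Omega |u|^{\alpha_1-p_1}|\nabla u|^{p_1}\,dx \le |u|_\infty^{\alpha_1-p_1}\,\|u\|_{W_1}^{p_1}$ (with the analogue in $v$), I would reach a bound of the form
\[
\J(u,v)\ \le\ c_1 + c_2\bigl(|u|_{\alpha_1}^{\alpha_1} + \|u\|_{W_1}^{p_1} + |u|_\infty^{\alpha_1 - p_1}\|u\|_{W_1}^{p_1}\bigr) + \bigl(\hbox{analogue in $v$}\bigr) - c_0\bigl(|u|_{1/\theta_1}^{1/\theta_1} + |v|_{1/\theta_2}^{1/\theta_2}\bigr).
\]

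After fixing a finite--dimensional subspace $V\subset X$, I would invoke the equivalence of all norms on $V$, obtaining a constant $\kappa_V > 0$ such that $|u|_\infty + \|u\|_{W_1} + |u|_{\alpha_1}\le\kappa_V|u|_{1/\theta_1}$ and the analogous estimate for $v$, for every $(u,v)\in V$. Substituting into the inequality above, each positive contribution becomes a power of $|u|_{1/\theta_1}$, respectively $|v|_{1/\theta_2}$, of exponent at most $\alpha_1 < \tfrac{1}{\theta_1}$, respectively $\alpha_2 < \tfrac{1}{\theta_2}$, whereas the negative terms carry the exponents exactly $\tfrac{1}{\theta_1}$ and $\tfrac{1}{\theta_2}$. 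I expect the main subtlety to be the mixed gradient--dependent term $\int_\Omega |u|^{\alpha_1-p_1}|\nabla u|^{p_1}\,dx$: in the ambient space $X$ it cannot be absorbed into a norm of order below $1/\theta_1$, but once restricted to $V$ the uniform $L^\infty$--bound supplied by the equivalence of norms absorbs the factor $|u|_\infty^{\alpha_1-p_1}$ into a power of order $\alpha_1 < 1/\theta_1$, exactly what is needed to let the negative part of $\J$ dominate as $\|(u,v)\|_X\to+\infty$.
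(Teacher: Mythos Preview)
Your proposal is correct and follows essentially the same route as the paper, which defers to \cite[Theorem 5.2]{CS}: an upper bound on $A$ and $B$ with $t$--growth of order strictly less than $1/\theta_i$ (via \cite[Proposition 6.5]{CP2}), the lower bound on $G$ from $(g_5)$, and then the equivalence of norms on the finite--dimensional projections $\pi_i(V)\subset X_i$ to let the negative term of order $1/\theta_i$ dominate. Your remark that the mixed term $|u|_\infty^{\alpha_1-p_1}\|u\|_{W_1}^{p_1}$ is harmless only after restricting to $V$ is exactly the point, and the final step---that $|u|_{1/\theta_1}+|v|_{1/\theta_2}$ is itself a norm on $V$ equivalent to $\|\cdot\|_X$, so that $\J\to-\infty$ along every unbounded sequence in $V$---completes the argument.
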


Now, we can prove our multiplicity results.

\begin{proof}[Proof of Theorem \ref{ThMolt}]
Firstly, we observe that \eqref{functional}, \eqref{ABzeros} and \eqref{Gzeros} 
give $\J(0, 0)=0$, while assumptions $(h_8)$ and $(g_6)$ imply that the functional $\J$ is even in $X$. 
Furthermore, taking any $r>0$, we set
\[
\M_r =\left\{ (u, v)\in X:\ \ell(u, v) =r\right\}. 
\]
By definition, $\M_r$ is the boundary of a symmetric neighborhood of the origin
which is bounded with respect to $\|\cdot\|_W$.\\
Now, fixing any $\varrho >0$, from Proposition \ref{PropMolt} 
it follows that an integer $m_{\varrho}\geq 1$ and a radius $r_\varrho=r_\varrho(m_{\varrho}) >0$ 
exist so that
\[
(u, v)\in \M_{r_\varrho} \cap (Y_{m_{\varrho}}^{X_1}\times Y_{m_{\varrho}}^{X_2}) 
\quad\implies\quad \J(u, v)\geq\varrho,
\]
while, by choosing $m>m_{\varrho}$, the $m$--dimensional space $V_m$ is such that
$\mbox{codim }Y_{m_{\varrho}} < \mbox{dim }V_m$, 
and from Proposition \ref{PropDim} a radius $R_{V_m} >0$ exists so that
\[
\J(u, v)\leq 0 \quad\mbox{ for all  $(u, v)\in V_m$ such that } \ \|(u, v)\|_X\geq R_{V_m}.
\]
Hence, assumption $(\cal{H}_{\varrho})$ in Theorem \ref{abstract} is verified. 
Then, the arbitrariness of $\varrho >0$ so that $(\cal{H}_{\varrho})$ holds,
together with Propositions \ref{smooth1} and \ref{PropwCPS}, allows us
to apply Corollary \ref{multiple} and the existence of a sequence of diverging critical levels 
for the functional $\J$ in $X$ is provided.
\end{proof}

\begin{proof}[Proof of Theorem \ref{ThModel}]
Taking $A(x,t,\xi)$ and $B(x,t,\xi)$ as in \eqref{ExAB}, from \eqref{exj0} it follows that conditions 
$(h_0)$--$(h_4)$ and $(h_6)$ hold. Moreover, if $G(x,u,v)$ is as in \eqref{ExG}, assumptions \eqref{exj0}--\eqref{exj02} and
Young inequality imply that $(g_0)$--$(g_2)$ are satisfied with
\[
t_1 = \gamma_2 \frac{q_1 - 1}{q_1 - \gamma_1},\quad
t_2 = \gamma_1 \frac{q_2 - 1}{q_2 - \gamma_2}.
\]
On the other hand, again from \eqref{exj0}, 
direct computations allow us 
to prove that hypotheses $(h_5)$ and $(g_3)$ are verified, too.
At last, also condition $(g_5)$ holds as \eqref{exj0} and
direct computations allow us to prove that for any $R \ge 2$ 
it is
\[
\frac{G(x,u,v)}{|u|^{\frac{1}{\theta_1}} + |v|^{\frac{1}{\theta_2}}} 
\ge \frac{1}{2} \min\left\{\frac{1}{q_1}, \frac{1}{q_2}\right\}
\quad \hbox{if $(u,v) \in \R^2$ is such that $|(u,v)| \ge R$.}
\] 
Then, since the symmetric assumptions $(h_8)$ and $(g_6)$ are trivially satisfied,
the thesis follows from Theorem \ref{ThMolt}. 
\end{proof}


\subsection*{Acknowledgments}
The authors wish to thank the Referee for her/his comments and suggestions 
which have been useful and have helped to improve this manuscript.


\end{document}